\definecolor{light-gray}{gray}{0.7}
\DeclareMathAlphabet{\mathcalligra}{T1}{calligra}{m}{n}
\DeclareFontShape{T1}{calligra}{m}{n}{<->s*[1.5]callig15}{}
\newtheorem{theorem}{Theorem}[section]
\newtheorem*{theoremstar}{Theorem}
\newtheorem{lemma}[theorem]{Lemma}
\newtheorem{proposition}[theorem]{Proposition}
\newtheorem{corollary}[theorem]{Corollary}
\theoremstyle{definition}
\newtheorem{definition}[theorem]{Definition}
\newtheorem{example}[theorem]{Example}
\newtheorem{remark}[theorem]{Remark}
\newtheorem{theorem-definition}[theorem]{Theorem-Definition}
\newtheorem{lemma-definition}[theorem]{Lemma-Definition}
\newtheorem{notation}[theorem]{Notation}
\numberwithin{equation}{section}
\renewcommand{\AA} {\mathbb{A}}
\newcommand{\CC} {\mathbb{C}}
\newcommand{\GG} {\mathbb{G}}
\newcommand{\LL} {\mathbb{L}}
\newcommand{\NN} {\mathbb{N}}
\newcommand{\PP} {\mathbb{P}}
\newcommand{\QQ} {\mathbb{Q}}
\newcommand{\RR} {\mathbb{R}}
\newcommand{\TT} {\mathbb{T}}
\newcommand{\ZZ} {\mathbb{Z}}
\newcommand {\shB} {\mathcal{B}}
\newcommand {\shC} {\mathcal{C}}
\newcommand {\shD} {\mathcal{D}}
\newcommand {\shN} {\mathcal{N}}
\newcommand {\shS} {\mathcal{S}}
\newcommand {\shP} {\mathcal{P}}
\newcommand {\sC} {\mathscr{C}}
\newcommand {\sE} {\mathscr{E}}
\newcommand {\sF} {\mathscr{F}}
\newcommand {\sH} {\mathscr{H}}
\newcommand {\sI} {\mathscr{I}}
\newcommand {\sL} {\mathscr{L}}
\newcommand {\sM} {\mathscr{M}}
\newcommand {\sN} {\mathscr{N}}
\newcommand {\sO} {\mathscr{O}}
\newcommand {\sP} {\mathscr{P}}
\newcommand {\sQ} {\mathscr{Q}}
\newcommand {\sR} {\mathscr{R}}
\newcommand {\sU} {\mathscr{U}}
\newcommand {\sV} {\mathscr{V}}
\newcommand {\foR} {\mathfrak{R}}
\newcommand {\foS} {\mathfrak{S}}
\newcommand {\bG} {\mathbf{G}}
\newcommand {\bdd} {\mathbf{d}}
\DeclareMathAlphabet{\mathcalligra}{T1}{calligra}{m}{n}
\DeclareFontShape{T1}{calligra}{m}{n}{<->s*[1.5]callig15}{}
\newcommand{\blank}{\underline{\hphantom{A}}}
\newcommand {\D} {\operatorname{D}}
\newcommand{\sExt}{\mathscr{E} \kern -1pt xt}
\renewcommand {\H} {\operatorname{H}}
\newcommand {\Hom} {\operatorname{Hom}}
\newcommand {\sHom}{\mathscr{H}\kern-5pt\mathcalligra{om}}
\newcommand {\id} {\operatorname{id}}
\newcommand {\Id} {\operatorname{Id}}
\renewcommand {\Im} {\operatorname{Im}}
\newcommand {\kk}{\Bbbk}
\newcommand {\Ker} {\operatorname{Ker}}
\newcommand {\Pic} {\operatorname{Pic}}
\newcommand {\Proj} {\operatorname{Proj}}
\newcommand{\pr} {\mathrm{pr}}
\newcommand {\quadand} {\quad\text{and}\quad}
\newcommand {\Spec} {\operatorname{Spec}}
\newcommand {\Sym} {\operatorname{Sym}}
\newcommand{\sTor}{\mathscr{T} \kern -3pt or}
\newcommand{\Hilb} {\operatorname{Hilb}}
\newcommand{\Bl} {\operatorname{Bl}}
\newcommand{\Dqc}{\mathrm{D}_{\mathrm{qc}}}
\newcommand{\Dpc}{\mathrm{D}_{\mathrm{coh}}^-}
\newcommand{\Dltpc}{\mathrm{D}_{\mathrm{coh}}^{\mathrm{b}}}
\newcommand{\Dperf} {\mathrm{D}^{\mathrm{perf}}}
\newcommand{\Ind}{\operatorname{Ind}}
\newcommand{\Fun}{\mathrm{Fun}}
\newcommand{\Map}{\mathrm{Map}}
\newcommand{\CAlgDelta}{\operatorname{CAlg}^\Delta}
\newcommand {\coh} {\mathrm{coh}}
\newcommand{\fib}{\operatorname{fib}}
\newcommand{\cofib}{\operatorname{cofib}}
\newcommand {\dSchur}{\mathbb{S}}
\newcommand {\dWeyl}{\mathbb{W}} 
\newcommand*\bigcdot{\mathpalette\bigcdot@{.5}}
\newcommand*\bigcdot@[2]{\mathbin{\vcenter{\hbox{\scalebox{#2}{$\m@th#1\bullet$}}}}}
\newcommand*\bigdot{\mathpalette\bigdot@{.5}}
\newcommand*\bigdot@[2]{\mathbin{\hbox{\scalebox{#2}{$\m@th#1\bullet$}}}}
\newcommand{\Grass}{\operatorname{Grass}}
\newcommand{\Flag}{\operatorname{Flag}}
\newcommand {\Incidence} {\mathfrak{I}{\rm ncid}}
\title[Derived Categories of Derived Grassmannians]{Derived Categories of Derived Grassmannians}
\author[Q.Y.\ JIANG]{Qingyuan Jiang}
\address{School of Mathematics, University of Edinburgh, James Clerk Maxwell Building, Peter Guthrie Tait Road, Edinburgh EH9 3FD, United Kingdom.}
\email{qingyuan.jiang@ed.ac.uk}
\begin{document}

\begin{abstract} 
This paper establishes semiorthogonal decompositions for derived Grassmannians of perfect complexes with Tor-amplitude in $[0,1]$. This result  verifies the author's Quot formula conjecture \cite{J21} and generalizes and strengthens Toda's result in \cite{Tod21b}. 

We give applications of this result to various classical situations such as blowups of determinantal ideals, reducible schemes, and varieties of linear series on curves.

Our approach utilizes the framework of derived algebraic geometry, allowing us to work over arbitrary base spaces over $\QQ$. It also provides concrete descriptions of Fourier-Mukai kernels in terms of derived Schur functors.
\end{abstract}

\maketitle

\section{Introduction}
This paper establishes semiorthogonal decompositions for a broad class of maps $\Grass_X(\sE; d) \to X$, where $\Grass_X(\sE; d)$ is the relative Grassmannian of a complex $\sE$ over $X$ (\cite{J22b}):

\begin{theoremstar}[Theorem \ref{thm:SOD}]
Let $d \in \ZZ_{>0}$. For any scheme (or more generally, prestack) $X$ over $\QQ$, any perfect complex $\sE$ of Tor-amplitude in $[0,1]$ and rank $r \geq 0$ on $X$, and any type of derived category $\D \in \{\Dqc, \Dpc, \Dltpc, \Dperf\}$, there is a semiorthogonal decomposition 
\begin{align}
\label{eqn:intro:SOD}
\D(\Grass_X(\sE;d)) 
&=  \left \langle  \text{$\binom{r}{i}$ copies of } \D(\Grass_X(\sE^\vee[1];d-i))   \right \rangle_{0 \le i \le \min\{r, d\}}.
\end{align}
This semiorthogonal decomposition is induced by faithfully functors $\Phi^{(i,\lambda)}$ (Notation \ref{notation:Phi_lambda}) that are explicitly expressed in terms of derived Schur functors applied to universal perfect complexes on the incidence loci, parametrized by Young diagrams $\lambda$ of height $\le (r-i)$ and width $\le i$. 
\end{theoremstar}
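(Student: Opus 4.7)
The plan is to pass to a standard local model, construct the Fourier--Mukai kernels via derived incidence correspondences, verify fully-faithfulness and semi-orthogonality by a Borel--Weil--Bott calculation on the fibers, and then establish generation by induction from a derived-projectivization base case. Using the descent properties of derived Grassmannians from \cite{J22b}, I would first reduce to the case in which $\sE$ is globally presented as a two-term complex $[\sV \xrightarrow{\alpha} \sW]$ of vector bundles of ranks $s$ and $s+r$. Pulling back to a universal base (the total space of $\sHom$ over $\BGL_s \times \BGL_{s+r}$), one may further realize $\Grass_X(\sE;d)$ as the derived zero locus of a tautological section inside the classical Grassmannian $\Grass_X(\sW;d)$, and analogously $\Grass_X(\sE^\vee[1]; d-i)$ inside a classical Grassmannian of $\sV^\vee$; on these ambient smooth Grassmannians Kapranov's exceptional collection is in force.

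Next, for each $i$ with $0 \le i \le \min\{r,d\}$ and each Young diagram $\lambda \subseteq (r-i)\times i$, I would construct a derived incidence locus over $X$ fitting into projections
\[
\Grass_X(\sE;d) \;\leftarrow\; \Incidence_{i} \;\rightarrow\; \Grass_X(\sE^\vee[1]; d-i),
\]
parametrizing compatible flag-type data, and define $\Phi^{(i,\lambda)}$ as the push-pull along these projections twisted by the derived Schur functor $\dSchur^{\lambda}$ applied to a natural universal two-term complex on $\Incidence_i$. The constraint on $\lambda$ reflects exactly the range where this Schur functor produces a non-zero, bounded object, and the count $\binom{r}{i}$ of such $\lambda$'s matches the multiplicity of $\D(\Grass_X(\sE^\vee[1]; d-i))$ in the target decomposition.

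To check that each $\Phi^{(i,\lambda)}$ is fully faithful, I would compute $\Phi^{(i,\lambda),R}\circ \Phi^{(i,\lambda)}$ by base change and the projection formula on the double incidence $\Incidence_i \times_{\Grass_X(\sE;d)} \Incidence_i$. The resulting complex is evaluated fiberwise by a Borel--Weil--Bott calculation on products of classical Grassmannian fibers applied to pairs of Schur functors: the extremal summand recovers the identity, and all others vanish by BWB. An analogous BWB / Schur--Weyl computation over suitable mixed incidences, performed for distinct parameters $(i,\lambda) \neq (i',\lambda')$, yields the required semi-orthogonality in the order prescribed by the indexing.

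The most delicate step is \emph{generation}. I plan to proceed by induction on $d$ (and on the rank $r$), with base case $d = 1$---the derived projectivization $\Grass_X(\sE;1) = \PP_X(\sE)$---provided by the author's Quot-formula conjecture as treated in \cite{J21}, suitably extended to arbitrary prestacks over $\QQ$. The inductive step would use the derived flag $\Flag_X(\sE; 1, d)$, which fibers over $\Grass_X(\sE; d)$ with derived projective-bundle fibers and over $\PP_X(\sE)$ with derived Grassmannian fibers of reduced rank; combining the two SODs coming from these fibrations and mutating into a common form should exhibit the required spanning of $\D(\Grass_X(\sE;d))$. The principal obstacle I anticipate is the \emph{combinatorial bookkeeping of these mutations in a Young-diagram-compatible way}, so that the two inductions produce the prescribed indexing by $\lambda \subseteq (r-i) \times i$ on the nose; this is exactly the step where the Littlewood--Richardson rule for derived Schur functors, together with systematic BWB-vanishing on the classical Grassmannian fibers, has to be invoked.
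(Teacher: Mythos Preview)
Your overall plan---reduce to a universal local model, build the kernels on derived incidence loci, and verify the SOD via Borel--Weil--Bott computations---matches the paper's strategy. However, the two key technical steps are organized differently.

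For fully-faithfulness and semiorthogonality, you propose to compute $\Phi^{(i,\lambda),R}\circ\Phi^{(j,\mu)}$ directly on the double incidence $\Incidence_i\times_{\Grass(\sE;d)}\Incidence_j$ via fiberwise BWB. The paper avoids this: its crucial maneuver is the factorization (Corollary~\ref{cor:forg:FM:Incidence})
\[
\Phi^{(i,\lambda)}_{(d,d-i)} \;\simeq\; \Psi_{a_1}\circ\cdots\circ\Psi_{a_{r-i}}\circ\big(\otimes\det(\sQ)^{\lambda_{r-i}}\big)\circ\Phi^{(0)}_{(d+r-i,\,d-i)},
\]
where the $\Psi_k$ are the one-step flag correspondence functors of Notation~\ref{notation:Psi_k} along $\Flag(\sE;d,d+1)$, and $\Phi^{(0)}_{(d+r-i,d-i)}$ is the ``extremal'' incidence functor with trivial Schur kernel. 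The fully-faithfulness of $\Phi^{(0)}$ is then checked on Kapranov generators via a single Koszul/dual-exceptional-collection argument (Lemma~\ref{lem:flip}), and the behavior of the $\Psi_k$ via a Lascoux-type resolution (Lemma~\ref{lem:flag:Lascoux}). This factorization is what turns the Young-diagram combinatorics into an iterated sequence of one-step operations, and it is precisely where the characteristic-zero hypothesis enters (Lemma~\ref{lem:flag:Lascoux}\eqref{lem:flag:Lascoux-2}).

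For generation, you propose induction on $d$ via the flag $\Flag(\sE;1,d)$, with the projectivization formula as base case. The paper instead runs an iterative process (``process $(*)$'' in \S\ref{subsec:proof:SOD}) using $\Flag(\sE;d,d+1)$: starting from $\D(\Grass(\sE;d))=\shB_{n-d,d}$, it repeatedly applies the one-step SOD of Corollary~\ref{cor:induction:pattern} to pass from subcategories $\shB_{n-d,k}$ to $\shB_{n-d-1,k'}$, terminating at $\shB_{n-d-r+i,\,d-i}\simeq\D(\Grass(\sE^\vee[1];d-i))$ via Corollary~\ref{cor:ff:incidence}. In other words, the induction is on the \emph{height} parameter $n-d$ (equivalently, one moves $d\to d+1\to\cdots\to d+r-i$), not on $d$ downward. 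Your $\Flag(\sE;1,d)$ scheme might also work, but it would require relating $\Grass((\sR_{\PP(\sE)})^\vee[1];d-1-j)$ over $\PP(\sE)$ back to $\Grass(\sE^\vee[1];d-i)$ over $X$, which introduces an extra layer of push-down and mutations that the paper's route avoids. The paper's approach has the further advantage that the semiorthogonal order $<$ of Notation~\ref{notation:differene.order} falls out transparently from the lexicographic order on the sequences $(a_1,\ldots,a_{r-i})$.
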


This result verifies and generalizes the author's Quot formula conjecture \cite[Conj. A.5]{J21}. 

Yukinobu Toda  \cite{Tod21b} has established a version of this theorem\footnote{The semiorthogonal decompositions in these two papers have different semiorthogonal orders, but we expect that they differ by a sequence of mutations.} using a different method, the categorified Hall product. His theorem applies to any smooth quasi-projective complex variety $X$. This paper extends and strengthens Toda's result by removing the assumptions  of smoothness and quasi-projectivity on the base $X$, providing explicit descriptions of the Fourier--Mukai kernels, and including the cases for $\D= \Dqc, \Dpc$, and $\Dperf$. 

Our theorem both unifies and generalizes the following important results:

\begin{itemize}
	\item Orlov's projective bundle formula \cite{Orlov92}.
	\item Kapranov's exceptional collections for Grassmannians and the generalization to Grassmannian bundles \cite{K88, BLV, Ef}.
	\item Orlov's blowup formula \cite{Orlov92}. 
	\item The semiorthogonal decompositions for standard flips \cite{BO, Tod21a,BFR,J21}.
	\item Orlov's universal hyperplane section formula \cite{Orlov06}.
	\item The embedding of derived categories for Grassmannian flips (see \cite{BLV2, BLV3, DS, BCF+, J21, Tod21a, Tod21b}). 
	\item Pirozhkov's formula for total spaces of universal bundles on Grassmannians \cite{Pi}.
	\item The author and Leung's projectivization formula (\cite{JL18}; see also \cite{Kuz07, Tod2}).
	\end{itemize}
It not only extends these results to arbitrary base $X$ over $\QQ$ 
but also to the stratified situations.

We first consider the case where $r > d$. Assuming for simplicity that $\sE$ is presented by a generically injective map $\sO_X^m \xrightarrow{\sigma} \sO_X^n$, where $r=n-m \ge 1$, then:
\begin{itemize}
	\item The map $\Grass_X(\sE;d) \to X$ is a {\em stratified} Grassmannian bundle. The general fibers are Grassmannian varieties $\GG_d(r)$, but the fiber dimension jumps over the degeneracy loci $X_j = D_{m-j}(\sigma)$ where the map $\sigma$ has rank $\le m-j$, for $j \ge 1$.
	\item The maps $\Grass_X(\sE^\vee[1]; j) \to X$ on the right-hand side of \eqref{eqn:intro:SOD} are (derived) partial resolutions of the degeneracy loci $X_j=D_{m-j}(\sigma)$, $j \ge 1$. Consequently, their derived categories provide noncommutative partial resolutions of $X_j$'s.	
\end{itemize}

Therefore, in this case, our theorem extends Kapranov's result to stratified Grassmannian bundles. The formula \eqref{eqn:intro:SOD} implies that $\D(\Grass_X(\sE;d))$ contains $\binom{r}{d}$ copies of $\D(X)$, which corresponds to a family version of Kapranov's exceptional collections for a genuine $\GG_d(r)$-bundle. The ``corrections" are given by the noncommutative partial resolution $\D(\Grass_X(\sE^\vee[1]; j)$ of the degeneracy loci $X_j$, $j=1,2,\ldots, d$, capturing the contributions arising from the fiber-dimension-jumping behavior of $\Grass_X(\sE;d)$ over $X_j$.

Similarly, the case $d=r$ of the theorem extends Orlov's blowup formula \cite{Orlov92} for blowups along locally complete intersection (l.c.i.) subschemes to non-l.c.i. cases (see Corollary \ref{cor:SOD:blowup.det}).

In the case where $d>r$, $\D(\Grass_X(\sE;d))$ and $\D(\Grass_X(\sE^\vee[1]; d-r))$ are both noncommutative partial resolution of the degeneracy locus $X_{d-r}$.  We should view $\Grass_X(\sE;d)) \dashleftarrow \Grass_X(\sE^\vee[1]; d-r)$ as a derived generalization of Grassmannian flip. The formula \eqref{eqn:intro:SOD} recovers the embedding of derived categories for this flip, with the orthogonal complement given by noncommutative partial resolutions of higher degeneracy loci $X_{d-r+j}$, $j=1,2,\ldots,r$.

\begin{remark}[Characteristic-zero assumption] 
Notice that in our proof, the characteristic-zero assumption is only required in Lemma \ref{lem:flag:Lascoux}.\eqref{lem:flag:Lascoux-2}. Consequently, if we consider cases where the involved Lascoux-type complexes of Lemma \ref{lem:flag:Lascoux}.\eqref{lem:flag:Lascoux-2} are characteristic-free (e.g., when d = 1), our theorem's result is characteristic-free.
\end{remark}

\subsection{Classical Applications}
Due to the complex behavior of map $\Grass_X(\sE;d) \to X$, our theorem finds applications to various interesting classical situations: 
\begin{itemize}
	\item ({\em Blowup formula for blowups of determinantal ideals \S \ref{sec:blowup.det}}). Let $\Bl_{Z}(X) \to X$ be the blowup of a scheme $X$ along a determinantal subscheme of codimension $(r+1)$ considered in \S \ref{sec:blowup.det}. Then we obtain a semiorthogonal decomposition
\begin{align*}  \D\left(\Bl_{Z}(X)\right) =   \left \langle    \Big \langle \text{$\binom{r}{j}$ copies of~} \D(\widetilde{X_j}) \Big \rangle_{1 \le j \le r} ,  ~ \D(X)  \right \rangle,
\end{align*}
where $\widetilde{X_j}$ are (possibly derived) partial resolutions of the determinantal loci $X_j$, $j=1,\ldots,r$; see Corollary \ref{cor:SOD:blowup.det}.
	\item ({\em Derived categories for reducible schemes \S \ref{sec:reducible}}). In \cite[Examples 7.22]{J22a}, the projectivization formula was used to obtain the following formula for attaching a rational tail $\PP^1$ to a smooth point $p$ of a complex curve $C$:
	\begin{align*}
	\D\big(C \bigsqcup\nolimits_{p} \PP^1\big)   = \big\langle \D(\Spec \CC[\varepsilon_1]), \D(C) \big \rangle,
	\end{align*}
	where $\CC[\varepsilon_1]$ is the derived ring of dual numbers with $\deg (\varepsilon_1) = 1$ and $\varepsilon_1^2=0$; see also \cite[Proposition 6.15]{KS}.  This paper greatly generalizes this result to a large class of reducible schemes (see Corollary \ref{cor:SOD:reducible}), which includes the central fibers of the deformation-to-normal-cone construction as special cases (Remark \ref{rmk:deftonormalcone}).
	\item ({\em Varieties of Linear Series on Curves}). 
	Consider the varieties $G_{d}^{r}(C)$ parametrizing linear series of degree $d$ and dimension $r$ on a smooth complex projective curve of genus $g \ge 1$ (see \cite[Chapters IV, V]{ACGH}). Our theorem implies that $G_{d}^{r}(C)$ have natural derived enhancements $\bG_{d}^{r}(C)$, for which there is a semiorthogonal decomposition
	\begin{align*} 
	\D(\bG_{d}^{r}(C)) = \left\langle
\text{$\binom{1-g+d}{i}$ copies of } \D(\bG_{2g-2-d}^{r-i}(C)) \right\rangle_{0 \leq i \leq \min\{1-g+d, r+1\}}
	\end{align*}
	provided that $d \ge g-1$ and $r \ge -1$; see Corollary \ref{cor:SOD:curves} and \cite[Corollary 1.6]{Tod21b}. This result extends Toda's result for symmetric products of curves in \cite{Tod2} (see also \cite{JL18, BK19}) and the author's result \cite{J21} for the case when $r=1$. 
\end{itemize}

For special curves, the above Corollary \ref{cor:SOD:curves} gives rise to examples of flips of classical threefolds, where the semiorthogonal decomposition contains components given by nonclassical derived schemes (see Example \ref{eg:curves:g=5}). It also produces examples of derived equivalences for threefold flops induced by nonclassical derived incidence schemes (see Example \ref{eg:curves:g=7}). 

The framework presented in this paper allows us to extend the above Corollary \ref{cor:SOD:curves} to families of singular integral curves $\sC/S$, with the role of $\Pic^d(C)$ replaced by the compactified Jacobians $\overline{{\rm Jac}}^d_{\sC/S}$; see Remark \ref{rmk:curves:Hilb}. 

\subsection{A Categorified Decomposition Theorem} 
For a proper map $Y \to X$ between complex algebraic varieties, the Beilinson-- Bernstein--Deligne (BBD) decomposition theorem (see \cite{dCM}) provides a decomposition for intersection cohomologies
	$${\rm IH}^k(Y) \simeq \bigoplus\nolimits_{i} {\rm IH}^{k-d_i}(\overline{X_i}, L_i),$$
where $X_i \subseteq X$ are strata for the map $Y \to X$, $L_i$ are locally systems on $X_i$, and $d_i \in \mathbb{Z}$. 

A fundamental question is in which situation can we ``categorify" this result, in the sense of finding a semiorthogonal decomposition of the
derived category $\D(Y)$ of $Y$, with pieces given by derived categories of spaces supported over the closure $\overline{X_i}$ of strata $X_i$.  For instance, such a categorified decomposition is not possible for $K$-trivial contractions $Y \to X$.

As the spaces  on the right-hand side of the formula \eqref{eqn:intro:SOD} are derived partial resolutions of the closed strata for the map $Y=\Grass_X(\sE;d) \to X$, we could regard our main Theorem \ref{thm:SOD} as such a categorification  for a broad class of maps.

\subsection{Derived Algebraic Geometry}
This paper uses the framework of derived algebraic geometry (DAG), developed by Lurie, To{\"e}n and Vezzosi and many others (\cite{DAG, HTT, SAG, ToenDAG, GRI}). DAG plays a crucial role in this paper in the following aspects:

\begin{enumerate}
	\item ({\em Generality and compatibility with base change}). 
	The theorem applies to any prestack $X$ over $\QQ$, including all (derived) schemes and stacks as special cases. Moreover, the formulation of the formula \eqref{eqn:intro:SOD} commutes with arbitrary base change.
			
	\item ({\em Fourier--Mukai kernels via derived Schur functors}).  	
	The theorem provides explicit descriptions of the Fourier--Mukai kernels involved in terms of derived Schur functors. 
	The derived Schur functors are non-abelian derived functors (in the sense of  Quillen \cite{Qui} and Lurie \cite{HTT}, or equivalently, animations in the sense of \cite[\S 5.1]{CS}) of classical Schur module functors. 
	
	The theory of derived Schur functors has been studied in \cite[\S 3]{J22b}. Importantly, they are highly {\em computable}: using the generalized Illusie's isomorphisms \cite[Proposition 3.34]{J22b}, the derived Schur functors appearing in the theorem can be computed using Akin, Buchsbaum, and Weyman's theory of Schur complexes \cite{ABW, BHL+}.
		
	\item ({\em Derived incidence correspondence schemes}). 
	The Fourier--Mukai kernels of the theorem are supported on certain universal incidence loci (Definition \ref{def:incidence}). These incidence loci generally possess non-trivial derived structures, even in the cases where all involved spaces in the formula  \eqref{eqn:intro:SOD} are classical (see Example \ref{eg:curves:g=7}). They are the derived zero loci of cosections of the form   
		$$\sQ_+^\vee \boxtimes_X \sQ_-^\vee [1] \xrightarrow{\rho_+^\vee \boxtimes \rho_-^\vee[1]} \sE^\vee \boxtimes \sE  \xrightarrow{\rm ev} \sO,$$
	where $\sQ_\pm$ are universal quotient bundles. 
	 This incidence relation can be seen as a higher-rank and shifted version of the universal quadric incidence relation studied in homological projective duality (\cite{Kuz07, Pe19, JLX17}). 
\end{enumerate}

\subsection{Other Related Works}
The Chow-theoretical version of this paper's main theorem has been established by the author in  \cite{J19,J20}. 


In Koseki's paper \cite{Kos21}, Theorem \ref{thm:SOD} (in smooth case, for $\D=\Dltpc$) is used to prove a categorical blow-up formula for Hilbert schemes of points:
	$$\D(\Hilb_n(\widehat{S})) = \Big \langle \text{$p(j)$ copies of $\D(\Hilb_{n-j}(S))$} \Big \rangle_{j=0,1,\ldots,n},$$
where $\widehat{S}$ is the blowup of a smooth complex surface $S$ at a point and $p(j)$ is the number of partitions of $j$.
Koseki \cite{Kos21} also considered the cases of higher rank sheaves on del Pezzo, K3, or abelian surfaces. For general surfaces, the moduli spaces of higher rank sheaves are highly singular. We expect our Theorem \ref{thm:SOD} to be helpful to generalize the above results in these situations and address open questions (1) \&(2)  of \cite[\S 1.4]{Kos21}.

The flag correspondences of relative Grassmannians (see Notation \ref{notation:Psi_k}) have also been studied by Hsu in \cite{H21}. 
We expect the results and methods presented in this paper to be beneficial for investigating the categorical actions explored in {\em loc. cit.}.

In the case where $d=r$ in Theorem \ref{thm:SOD}, the map $\Grass(\sE;r) \to X$ should be regarded as a derived version of blowup, and we expect it to be closely related to the concept of derived blowups studied by Hekking, Khan and Rydh (see \cite{KR18, He21}).


\subsection{Notation and Convention}
We will use the framework of $\infty$-categories developed by Lurie in \cite{HTT}.
Our notations and terminologies will mostly follow those of \cite{J22a, J22b}. Here, we list the notations and conventions that are frequently used in this paper:

\begin{itemize}[leftmargin=*]
	\item ({\em $\infty$-categories of spaces}). We let $\shS$ denote the {\em $\infty$-category of spaces} (or equivalently, the $\infty$-category of $\infty$-groupoids). For a pair of objects $C,D$ in an $\infty$-category $\shC$, we let $\Map_\shC(C,D) \in \shS$ denote their {\em mapping space}. We let $\shC^{\simeq}$ denote {\em core} of $\shC$, that is, the $\infty$-category obtained from $\shC$ by discarding all non-invertible morphisms. For a pair of $\infty$-categories $\shC$ and $\shD$, we let $\Fun(\shC,\shD)$ denote the $\infty$-category of functors from $\shC$ to $\shD$.  
	\item ({\em Simplicial commutative ring}). We let $\CAlgDelta$ denote the $\infty$-category of ``derived rings", that is, {\em simplicial commutative rings} (see \cite[Definition 25.1.1.1]{SAG}; or equivalently, {\em animated commutative rings} in the sense of \cite[\S 5.1]{CS}).
	\item ({\em Prestacks}). A {\em prestack} is a functor $X \colon \CAlgDelta \to \shS$. A map between prestacks $X, Y \colon  \CAlgDelta \to \shS$ is a natural transformation $f \colon X \to Y$ of the functors . The notion of a prestack is probably the most general concept of spaces in algebraic geometry (\cite[Chapter 2 \S 0.1]{GRI}), and includes all derived schemes and derived higher stacks as special cases. 
	\item ({\em Partitions}). We let $B_{\ell,d}$ denote the set of {\em partitions} of height $\le \ell$ and width $\le d$, i.e., partitions $\lambda = (\lambda_1, \lambda_2, \ldots, \lambda_\ell)$ such that $d \ge \lambda_1 \ge \lambda_2 \ge \ldots \ge \lambda_\ell \ge 0$. For a partition $\lambda \in B_{\ell, d}$, we let $|\lambda| = \sum_{i=1}^{\ell} \lambda_i$. We denote its {\em transpose} by $\lambda^{t} = (\lambda_1^t, \lambda_2^t, \ldots, \lambda_s^t)$, i.e., for any $i \in \ZZ_{>0}$, $\lambda^t_i$ is the number of $j$'s such that $\lambda_j \ge i$. By convention, if one of $\ell$ and $d$ is zero, we set $B_{\ell,d}$ to be the singleton of zero partition $(0)$; we let $B_{\ell,d} = \emptyset$ if $\ell<0$ or $d<0$.
 	\item 
	({\em Notations for Derived categories}).
	In this paper, we will use the symbol $\D$ to represent one of the following derived {\em $\infty$-categories}: $\Dqc$, $\Dpc$, $\Dltpc$, or $\Dperf$. Specifically, for a prestack $X$, this paper will consider the following derived $\infty$-categories $\D(X)$: 
	\begin{itemize}
		\item ($\D =\Dqc$). We let $\Dqc(X)$ denote the $\infty$-category of quasi-coherent complexes on $X$ (\cite[\S 3.2]{J22a}) and let $\Dqc(X)^{\le 0}$ denote the full subcategory spanned by complexes $\sE$ such that $\sH^i(\sE):=\pi_{-i}(\sE)=0$ for $i>0$.
		If $X$ is a quasi-compact, quasi-separated scheme, the homotopy category of $\Dqc(X)$ is equivalent to the triangulated derived category of unbounded complexes of $\sO_X$-modules with quasi-coherent cohomologies. 
		\item ($\D=\Dpc, \Dltpc$). We let $\Dpc(X)$ (resp. $\Dltpc(X)$) denote the full subcategory of $\Dqc(X)$ spanned by almost perfect complexes (resp. locally truncated almost perfect complexes); see \cite[Proposition 6.2.5.2]{SAG} (resp., cf. \cite[Notation 6.4.1.1]{SAG}).
		For a Noetherian scheme $X$, the homotopy category of $\Dpc(X)$ (resp. $\Dltpc(X)$) corresponds to the triangulated derived category $\D^-(\coh  (X))$ (resp. $\D^{\rm b}(\coh(X))$) of right-bounded (resp. bounded) complexes of coherent sheaves on $X$, justifying the notations.
		\item ($\D = \Dperf$). We let $\Dperf(X)$ denote the $\infty$-category of perfect complexes on $X$. Then $\Dperf(X)$ is equivalent to subcategory of $\Dqc(X)$ spanned by dualizable objects. 
	\end{itemize}
	\item  
	({\em Tor-amplitude}). 
	A quasi-coherent $R$-complex $M$, where $R \in \CAlgDelta$, is said to have Tor-amplitude in $[0,1]$ if, for any discrete $R$-module $N$, $\pi_i(M \otimes_R N) = 0$ for $i \not\in [0,1]$. 
	A quasi-coherent complex $\sE$ over $X$ is said to have Tor-amplitude in $[0,1]$ if, for any $\eta \colon \Spec R \to X$, where $R \in \CAlgDelta$, $\eta^*(\sE)$ has Tor-amplitude in $[0,1]$ as an $R$-complex. 
	\item  
	({\em Derived convention}). 
	All the functors are assumed to be {\em derived}. For example, if $f \colon X \to Y$ is a map between schemes, $\sE$ is a sheaf on $X$, then $f_*(\sE)$ corresponds to the {\em derived} pushforward $\RR f_*(\sE)$ in the classical convention.
	\item 
	({\em Grothendieck's convention}).
	We will use Grothendieck's convention for projectivizations $\PP(\sE)$, Grassmannians  $\Grass(\sE;d)$ and flags $\Flag(\sE;\bdd)$, so that they parametrize {\em quotients} rather than sub-objects.
	 For example, the projectivization $\PP_X(\sE)$  parametrizes line bundle quotients of $\sE$ over $X$. \end{itemize}

\subsection{Acknowledgment}
The author would like to thank Arend Bayer for numerous helpful discussions and suggestions throughout this project, Richard Thomas for many valuable suggestions on the paper and helpful discussions on relative Grassmannians and degeneracy loci, and Yukinobu Toda for fruitful discussions related to the Quot formula conjecture and helpful comments on an earlier draft of this paper. 
This project originated when the author was a member at IAS, and he would like to thank J{\'a}nos Koll{\'a}r and Mikhail Kapranov for inspiring discussions during that period.
The author is supported by the Engineering and Physical Sciences Research Council [EP/R034826/1] and by the ERC Consolidator grant WallCrossAG, no. 819864.

\section{Derived Grassmannians and Incidence Correspondences}

\subsection{Derived Grassmannians and Derived Schur Functors}
This subsection briefly reviews the theory of derived Grassmannians and of derived Schur functors developed in \cite{J22b}. We let $X$ be a prestack and let $\sE \in \Dqc(X)^{\le 0}$.

\subsubsection{Derived Grassmannians and Derived Flag Schemes}

Let $\bdd= (0 \le d_1< \ldots <d_k)$ be an increasing sequence of integers, where $k \ge 1$. The {\em derived flag scheme of $\sE$ type $\bdd$} (\cite[Definition 4.28]{J22b}) is the prestack over $X$, denoted by
	$$\pr_{\Flag(\sE;\bdd)} \colon \Flag_X(\sE;\bdd) = \Flag(\sE;\bdd) \to X,$$
which carries each $\eta \colon T = \Spec A \to X$, where $A \in \CAlgDelta$, to the full sub-Kan complexes of $\Fun(\Delta^k, \Dqc(T)^{\le 0})^{\simeq}$ spanned by those elements
	$$\zeta_T = (\eta^*\sE \xrightarrow{\varphi_{k,k+1}} \sP_{k} \xrightarrow{\varphi_{k-1,k}} \sP_{k-1} \to \cdots \xrightarrow{\varphi_{1,2}} \sP_1),$$
where each $\phi_{i,i+1}$ is surjective on $\pi_0$, and $\sP_i$ is a vector bundle over $T$ of rank $d_i$, $i \le 1 \le k$. 

Derived flag schemes are derived extensions of Grothendieck's classical flag schemes (\cite[Proposition 4.37]{J22b}). 
The natural projection $\Flag(\sE;\bdd) \to X$ is a relative derived scheme (\cite[Proposition 4.38]{J22b}).
The formation of $\Flag_X(\sE; \bdd)$ commutes with arbitrary derived base change $X' \to X$ (\cite[Proposition 4.32]{J22b}).
If $\sE$ is a perfect complex of Tor-amplitude in $[0,1]$, then the projection $\Flag(\sE;\bdd) \to X$ is a proper, quasi-smooth relative derived scheme, with an invertible relative dualizing complex (see \cite[Corollary 4.47]{J22b}). 
We refer to \cite[\S 4.3]{J22b} for more details of their properties. 

There are two important special cases of derived flag schemes:

\begin{example}[Derived Grassmannians; {\cite[Definition 4.3]{J22b}}]
\label{eg:dGrass}
If $k=1$, $\bdd = (d)$, then we denote the projection $\Flag_X(\sE,\bdd) \to X$ by 
	$$\pr_{\Grass(\sE;d)} \colon \Grass_X(\sE;d) = \Grass(\sE;d) \to X,$$
and refer to it as the {\em rank $d$ derived Grassmannian of $\sE$}. It is by definition an element of $\Fun(\CAlgDelta, \shS)_{/X}$ which carries each $\eta \colon T \to X$, where $T \in \CAlgDelta$, to the space of morphisms 
	$$\left\{ u \colon \eta^* \sE \to \sP \mid \text{$u$ is surjective on $\pi_0$ and $\sP$ is a vector bundle of rank $d$ on $T$} \right\}^{\simeq}.$$ 
We will denote the universal fiber sequence on $\Grass(\sE;d)$ by 
	$$\sR_{\Grass(\sE;d)} \to \pr_{\Grass(\sE;d)}^*(\sE) \xrightarrow{\rho} \sQ_{\Grass(\sE;d)},$$ 
where $\sQ_{\Grass(\sE;d)}$ is the universal quotient bundle of rank $d$.
\end{example}

\begin{example}[Derived Complete Flag Schemes; {\cite[Example 4.31]{J22b}}]
\label{eg:dflag}
Let $n \ge 1$ be a positive integer, let $\bdd = \underline{n} :=(1,2, 3, \cdots, n)$. We will refer to
	$$\pr_{\Flag(\sE;\underline{n})} \colon \Flag_{X}(\sE; \underline{n}) = \Flag(\sE; \underline{n}) \to X.$$
as the {\em derived complete flag scheme of type $n$}. We denote the universal quotient sequence by
	$$\pr_{\Flag(\sE;\underline{n})}^* (\sE) \xrightarrow{\phi_{n,n+1}}  \sQ_{n} \xrightarrow{\phi_{n-1,n}} \sQ_{n-1} \to \cdots \xrightarrow{\phi_{1,2}} \sQ_1,$$
where $\sQ_i$ is the universal quotient bundle of rank $i$, and let 	
	$$\sL_i : =  \fib(\phi_{i-1,i} \colon \sQ_{i} \twoheadrightarrow \sQ_{i-1})$$
denote the universal line bundle on $\Flag(\sE; \underline{n})$, where we set $\sQ_0 = 0$ by convention. Consequently, for each $1 \le i \le n$, we have $\sL_1 \otimes \sL_2 \otimes \cdots \otimes \sL_i \simeq \det \sQ_i$. For any sequence of integers $\lambda = (\lambda_1, \lambda_2, \ldots, \lambda_n) \in \ZZ^n$, we define a line bundle $\sL(\lambda)$ on $\Flag(\sE;\underline{n})$ by the formula:
	\begin{equation*}
	\sL(\lambda) : = \sL_1^{\otimes \lambda_1} \otimes \sL_2^{\otimes \lambda_2} \otimes \cdots \otimes \sL_n^{\otimes \lambda_n}.
	\end{equation*}
If $\sE = \sV$ is a vector bundle of rank $n$, then the morphism $\phi_{n,n+1} \colon \pr_{\Flag(\sE;\underline{n})}^* (\sV) \to \sQ_{n}$ is an equivalence, and the forgetful map induces an equivalence $\Flag(\sV; \underline{n}) \xrightarrow{\simeq} \Flag(\sV; \underline{n-1})$.
\end{example}

\subsubsection{Forgetful Maps Between Derived Flag Schemes}
\label{subsec:forgetful}
If $\bdd' = (d_{i_j})_{1 \le j \le \ell}$ be a subsequence of $\bdd = (d_i)_{1 \le i \le k}$, where $(i_1 < i_2 < \ldots < i_\ell)$ is a subsequence of $\underline{n}=(1,2, 3, \cdots, n)$, then there is a natural forgetful map (\cite[\S 4.3.2]{J22b})
	$$\pi_{\bdd',\bdd} \colon \Flag(\sE; \bdd) \to \Flag(\sE; \bdd').$$
We will need the following proposition, which is a special case of \cite[Corollary 4.34]{J22b}:

\begin{proposition}[{\cite[Corollary 4.34]{J22b}}]
\label{prop:dflag:forget}
Let $k \ge 2$, let $i$ be an integer such that $1 \le i \le k-1$, and let $\bdd' = (d_1, \cdots, d_{i})$ and $\bdd''=(d_{i+1}, \cdots, d_{k})$ so that $\bdd = (\bdd', \bdd'')$.
\begin{enumerate}[leftmargin=*]
	\item \label{prop:dflag:forget-1}
	The forgetful map $\pi_{\bdd'',\bdd} \colon \Flag_X(\sE; \bdd) \to \Flag_X(\sE;  \bdd'')$ identifies $\Flag_X(\sE; \bdd)$ as the derived flag scheme $\Flag\big(\sQ_{d_{i+1}}^{(\bdd'')}; \bdd'\big)$ over $\Flag_X(\sE;  \bdd'')$, where $\sQ_{d_{i+1}}^{(\bdd'')}$ is the universal quotient bundle on $\Flag_X(\sE;  \bdd'')$ of rank $d_{i+1}$.
	\item \label{prop:dflag:forget-2}
	The forgetful map $\pi_{\bdd',\bdd}  \colon \Flag_X(\sE; \bdd) \to \Flag_X(\sE; \bdd')$ identifies $\Flag_X(\sE; \bdd)$ as the derived flag scheme $\Flag\big(\fib(\varphi_i^{(\bdd')}); d_{i+1}-d_{i}, \ldots, d_{k} - d_{i}\big)$ over $\Flag_X(\sE; \bdd')$, where $\sQ_{d_{i}}^{(\bdd')}$ is the universal rank $d_i$ quotient bundle on $\Flag_X(\sE; \bdd')$ and  $\varphi_i^{(\bdd')} \colon \pr_{\Flag(\sE; \bdd')}^*(\sE) \to \sQ_{d_{i}}^{(\bdd')}$ is the universal quotient map.
	\end{enumerate}
\end{proposition}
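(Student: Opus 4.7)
The plan is to verify both identifications by unwinding the functor-of-points description of derived flag schemes recalled above, on affine test schemes $T=\Spec A$ with $A \in \CAlgDelta$, and to appeal to the fact that the formation of $\Flag(\sE;\bdd)$ commutes with derived base change.

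\textbf{Part (1).} A lift of a given $T$-point of $\Flag(\sE;\bdd'')$ to $\Flag(\sE;\bdd)$ is, by the very definition of derived flag schemes, a refinement of the underlying chain $\eta^*\sE \to \sP_k^{(\bdd'')} \to \cdots \to \sP_{i+1}^{(\bdd'')}$ by a further chain of surjective-on-$\pi_0$ maps of vector bundles $\sP_{i+1}^{(\bdd'')} \to \sP_i \to \cdots \to \sP_1$ of the prescribed ranks $d_i > \cdots > d_1$. This is precisely the data of a $T$-point of $\Flag(\sP_{i+1}^{(\bdd'')};\bdd')$. Since $\sP_{i+1}^{(\bdd'')}$ is the pullback of the universal bundle $\sQ_{d_{i+1}}^{(\bdd'')}$ to $T$, base change compatibility identifies $\Flag(\sE;\bdd)$ over $\Flag(\sE;\bdd'')$ with $\Flag(\sQ_{d_{i+1}}^{(\bdd'')};\bdd')$, functorially in $T$.

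\textbf{Part (2).} Given a $T$-point of $\Flag(\sE;\bdd')$ with chain $\eta^*\sE \xrightarrow{\varphi} \sP_i^{(\bdd')} \to \cdots \to \sP_1^{(\bdd')}$, I would associate to a lift $\eta^*\sE \to \sP_k \to \cdots \to \sP_{i+1} \to \sP_i^{(\bdd')}$ the chain of fibers $\sK_j := \fib\bigl(\sP_j \to \sP_i^{(\bdd')}\bigr)$ for $j = i+1,\ldots,k$. Each $\sK_j$ is a vector bundle of rank $d_j - d_i$ (fibers of surjective maps of vector bundles of prescribed ranks), and the compatibilities of the $\sP_j$'s produce induced maps $\fib(\varphi) \to \sK_k \to \cdots \to \sK_{i+1}$. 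This yields a $T$-point of $\Flag\bigl(\fib(\varphi); d_{i+1}-d_i, \ldots, d_k-d_i\bigr)$. Conversely, given such a flag of quotients of $\fib(\varphi)$, I would recover $\sP_j$ as the pushout $\eta^*\sE \sqcup_{\fib(\varphi)} \sK_j$ in $\Dqc(T)$, and the octahedral axiom shows that surjectivity on $\pi_0$ of the maps $\sP_j \to \sP_{j-1}$ and $\eta^*\sE \to \sP_{i+1}$ is equivalent to surjectivity on $\pi_0$ of $\sK_j \to \sK_{j-1}$ and $\fib(\varphi) \to \sK_{i+1}$ respectively.

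\textbf{Main obstacle.} The delicate point is to promote these constructions from a bijection on isomorphism classes to an equivalence of mapping spaces, so that one obtains an equivalence of prestacks and not merely a bijection on $T$-points up to isomorphism. This follows from the standard fact that in the stable $\infty$-category $\Dqc(T)$ any pushout square is also a pullback square; applied here, it shows that the functors ``pushout along $\fib(\varphi) \to \eta^*\sE$'' and ``take fiber along $(-) \to \sP_i^{(\bdd')}$'' are mutually inverse equivalences between the slice $\infty$-categories of surjective-on-$\pi_0$ quotients of $\fib(\varphi)$ and of quotients of $\eta^*\sE$ sitting above $\varphi$. With this naturality verified, collecting the data for all $j=i+1,\ldots,k$ and checking compatibility in $T$ gives an equivalence of prestacks over $\Flag(\sE;\bdd')$, completing the identification.
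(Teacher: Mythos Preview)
Your argument is correct and is precisely the functor-of-points verification one would expect. Note, however, that the paper does not supply a proof of this proposition: it is stated as a special case of \cite[Corollary 4.34]{J22b} and simply quoted from that reference, so there is no ``paper's own proof'' to compare against.

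One small remark on Part (2): your pushout description $\sP_j \simeq \eta^*\sE \sqcup_{\fib(\varphi)} \sK_j$ is correct, but it is perhaps cleaner to phrase the inverse construction as $\sP_j := \cofib\bigl(\fib(\fib(\varphi)\to\sK_j) \to \eta^*\sE\bigr)$ and then invoke the octahedral axiom directly to obtain the fiber sequence $\sK_j \to \sP_j \to \sP_i^{(\bdd')}$, from which the vector-bundle property of $\sP_j$ (as an extension of two vector bundles) and the $\pi_0$-surjectivity of $\eta^*\sE \to \sP_j$ follow immediately. This is equivalent to what you wrote, since in a stable $\infty$-category the pushout square with corners $\fib(\varphi)$, $\eta^*\sE$, $\sK_j$ is bi-Cartesian and hence identifies the fourth vertex with both descriptions. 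Your handling of the coherence issue via the equivalence of slice $\infty$-categories is the right way to upgrade the pointwise bijection to an equivalence of prestacks.
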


\begin{remark} 
\label{rmk:dflag.fibersqure}
As a direct consequence of Proposition \ref{prop:dflag:forget}, if we assume $k \ge 3$, let $i,j$ be integers such that $1 \le i < j < k$ and let $\bdd$ be written as:
		$$\bdd = (\underbrace{d_1, \cdots, d_{i}}_{\bdd^{(1)}}; \underbrace{d_{i+1}, \cdots, d_{j}}_{\bdd^{(2)}}; \underbrace{d_{j+1}, \cdots, d_{k}}_{\bdd^{(3)}}),$$
then the natural commutative square of forgetful maps
		$$
	\begin{tikzcd}[column sep = 5 em]
		\Flag_X(\sE; \bdd)  \ar{d}{\pi_{(\bdd^{(1)}, \bdd^{(2)}), \bdd}}  \ar{r}{\pi_{(\bdd^{(2)}, \bdd^{(3)}), \bdd}}&  \Flag_X(\sE; \bdd^{(2)}, \bdd^{(3)}) \ar{d}{\pi_{\bdd^{(2)}, (\bdd^{(2)}, \bdd^{(3)})}} \\
		 \Flag_X(\sE; \bdd^{(1)}, \bdd^{(2)})  \ar{r}{\pi_{\bdd^{(2)}, (\bdd^{(1)}, \bdd^{(2)})}}  &  \Flag_X(\sE; \bdd^{(2)})
	\end{tikzcd}
	$$
is a pullback square (i.e., it is a derived fiber product square).
\end{remark}

\subsubsection{Derived Schur Functors}
Derived Schur functors, studied in \cite{J22b}, are non-abelian derived functors (in the sense of Dold--Puppe \cite{DP}, Quillen \cite{Qui} and Lurie \cite{HTT}, or equivalently, animations in the sense of Cesnavicius and Scholze \cite[\S 5.1]{CS}) of the classical Schur module functors. Specifically, for a prestack $Y$ and a partition $\lambda$, the {\em derived Schur functor associated with $\lambda$}, as defined in \cite[Definition 3.5, \S 3.5.2]{J22b}, is an endorfunctor of the derived $\infty$-category of quasi-coherent complexes denoted by
	$$\dSchur^{\lambda} \colon \Dqc(Y)^{\le 0} \to \Dqc(Y)^{\le 0}.$$
This functor extends the classical Schur module functors of vector bundles and preserves sifted colimits. 
In the particular case where $\lambda = (n)$ (resp. $\lambda =\underbrace{(1, \ldots, 1)}_{n \,\text{terms}}$) for some integer $n \ge 0$,  the derived Schur functor $\dSchur^{(n)} = \Sym^{n}$ (resp. $\dSchur^{(1, \ldots, 1)} = \bigwedge^{n}$) corresponds to the {\em $n$th derived symmetric power} (resp. {\em $n$th derived exterior power}) functor studied by Dold--Puppe \cite{DP}, Illusie \cite{Ill} and Lurie \cite[\S 3.1]{DAG} \cite[\S 25.2]{SAG}.  

The derived Schur functors possess many desirable functorial properties, such as their compatibility with arbitrary base change, and they satisfy derived generalizations of classical formulae like Cauchy's decomposition formula and Littlewood--Richardson rule. We refer the readers to \cite[\S 3]{J22b} for a more comprehensive discussion and detailed explanations. 

\subsubsection{Derived Borel--Weil--Bott Theorem}
The theories of derived flag schemes and derived Schur functors are connected via a derived generalization of the Borel--Weil--Bott theorem.

\begin{theorem}[Borel--Weil--Bott Theorem for Derived Complete Flag Schemes; {\cite[Theorems 5.26, 5.35]{J22b}}]
\label{thm:BWB}
Consider the situation described in Example \ref{eg:dflag}, assume that $\sE$ is a perfect complex rank $n$ and Tor-amplitude in $[0,1]$ over $X$, and let $\lambda = (\lambda_1, \ldots, \lambda_n) \in \NN^n$. Then:
\begin{enumerate}
	\item
	\label{thm:BWB-1}
	 If $\lambda$ is a partition, we have a canonical equivalence $(\pr_{\Flag(\sE;\underline{n})})_*(\sL(\lambda)) \simeq \dSchur^{\lambda}(\sE)$, where $\dSchur^{\lambda}(\sE)$ is the derived Schur functor applied to the perfect complex $\sE$.
	\item 
	\label{thm:BWB-2}
	If $X$ is defined over $\QQ$, one of the following two mutually exclusive cases occurs:
	\begin{enumerate}
	\item There exists a pair of integers $1 \le i < j \le n-1$ such that $\lambda_i  - \lambda_j = i - j$. In this case, $(\pr_{\Flag(\sE;\underline{n})})_* (\sL(\lambda)) \simeq 0.$
	\item There exists a unique permutation $w \in \foS_n$ such that $w \bigdot \lambda$ is non-increasing. In this case,  there is a canonical equivalence
		$(\pr_{\Flag(\sE;\underline{n})})_* (\sL(\lambda)) \simeq \dSchur^{w \bigdot \lambda}(\sE) [- \ell(w)].$
		Here, $w \bigdot \lambda = w(\lambda+\rho) -\rho$ denotes the dot action, and $\ell(w)$ is the length of $w$.
	\end{enumerate}
\end{enumerate}
\end{theorem}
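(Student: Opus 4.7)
The plan is to reduce both parts to the classical setting where $\sE$ is a vector bundle of rank $n$. In that case, $\Flag_X(\sE;\underline{n})$ agrees with Grothendieck's classical complete flag bundle, $\dSchur^{\lambda}(\sE)$ coincides with the classical Schur bundle $\Schur^{\lambda}(\sE)$, and the two assertions become, respectively, the relative Borel--Weil theorem (part \eqref{thm:BWB-1}, valid in any characteristic) and the relative Borel--Weil--Bott theorem over $\QQ$ (part \eqref{thm:BWB-2}). Both classical results can in turn be proved inductively via the tower
$$\Flag_X(\sE;\underline{n}) \to \Flag_X(\sE;\underline{n-1}) \to \cdots \to \PP_X(\sE) \to X,$$
where each step is a classical projective bundle; one pushes forward one tautological factor $\sL_k^{\otimes \lambda_k}$ at a time, using the projective bundle formula together with the classical Pieri / Littlewood--Richardson rules to assemble the intermediate results, and invoking Bott's cancellation in characteristic zero for non-dominant weights in part \eqref{thm:BWB-2}.

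To pass from vector bundles to arbitrary perfect complexes of Tor-amplitude $[0,1]$, I would exploit two compatibilities common to both sides of the desired equivalence. First, both sides commute with arbitrary derived base change $X' \to X$: for the right-hand side this is a defining property of derived Schur functors (\cite[\S 3]{J22b}); for the left-hand side the formation of $\Flag_X(\sE;\underline{n})$ is stable under base change and the projection $\pr_{\Flag(\sE;\underline{n})}$ is proper and quasi-smooth, so proper base change applies. Second, both sides are determined by their restrictions to vector bundles in the non-abelian derived sense: $\dSchur^{\lambda}$ is by construction the animation of the classical Schur functor, and one verifies, using the projectivization tower of Proposition \ref{prop:dflag:forget}, that the assignment $\sE \mapsto \bigl(\pr_{\Flag(\sE;\underline{n})}\bigr)_*\sL(\lambda)$ preserves the sifted colimits characterising such extensions. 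Concretely, presenting $\sE$ locally as a two-term complex $[\sV^0 \xrightarrow{\sigma} \sV^1]$ of vector bundles with $\rk\sV^1 - \rk\sV^0 = n$, I would realise $\Flag_X(\sE;\underline{n})$ as the derived zero locus inside the classical flag bundle $\Flag(\sV^1;\underline{\rk\sV^1})$ cut out by the vanishing of $\pr^*\sV^0 \to \sQ_n$; the resulting Koszul resolution, combined with the classical Borel--Weil--Bott computation on $\Flag(\sV^1;\underline{\rk\sV^1})$, then yields $(\pr)_*\sL(\lambda)$ as an explicit complex which is identified with $\dSchur^{\lambda}([\sV^0 \to \sV^1])$ via the Schur-complex formulas of \cite[Proposition 3.34]{J22b}.

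The principal obstacle I foresee is precisely this last identification: matching the Koszul-resolution computation on two-term complexes with the Schur-complex model of $\dSchur^{\lambda}$ on the nose, with signs, shifts, and Littlewood--Richardson combinatorics correctly tracked --- especially for part \eqref{thm:BWB-2}, where Bott's cancellation produces non-trivial cohomological shifts and sign changes through the simple reflections of $\foS_n$. Once this identification is pinned down for two-term complexes, base-change compatibility and descent extend the result to arbitrary perfect complexes of Tor-amplitude $[0,1]$, and the characteristic-zero hypothesis enters only to guarantee the semisimplicity underlying Bott's formula in part \eqref{thm:BWB-2}.
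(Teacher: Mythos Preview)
The paper does not prove this theorem: it is quoted verbatim from \cite[Theorems 5.26, 5.35]{J22b} and stated here without argument, so there is no in-paper proof to compare against. Your outline is a plausible reconstruction of how such a result is established --- reduce to the vector-bundle case via classical Borel--Weil(-Bott), then propagate to perfect complexes of Tor-amplitude $[0,1]$ using base change and the animation characterisation of derived Schur functors --- and this is indeed the shape of the arguments in \cite{J22b}.

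One technical point deserves care. You propose to embed $\Flag_X(\sE;\underline{n})$ into the \emph{complete} flag bundle $\Flag(\sV^1;\underline{\rk\sV^1})$ of the larger bundle $\sV^1$. The natural closed immersion coming from the cofiber presentation $\sE=\cofib(\sV^0\to\sV^1)$ lands in the \emph{partial} flag scheme $\Flag(\sV^1;\underline{n})$, cut out by the vanishing of $\pr^*\sV^0\to\sQ_n$ (cf.\ \cite[Proposition~4.19]{J22b}); the line bundle $\sL(\lambda)$ is the restriction of the corresponding line bundle on $\Flag(\sV^1;\underline{n})$, not on the full flag of $\sV^1$. This does not break your strategy --- one can further resolve via $\Flag(\sV^1;\underline{\rk\sV^1})\to\Flag(\sV^1;\underline{n})$, or invoke Borel--Weil--Bott directly on the partial flag --- but the bookkeeping is slightly different from what you wrote. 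The more delicate issue you flag, namely that the assignment $\sE\mapsto(\pr_{\Flag(\sE;\underline{n})})_*\sL(\lambda)$ commutes with sifted colimits (so that it is the animation of its restriction to vector bundles), is genuinely the crux; in \cite{J22b} this is handled by building up through the projectivization tower and using the derived Serre computation at each $\PP^1$-step, rather than a single global Koszul argument. Your Koszul-plus-Schur-complex route would also work and is essentially dual to that approach, but you are right that the combinatorial matching with \cite[Proposition~3.34]{J22b} is where all the content lies.
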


In the special case where  $\sE$ is a vector bundle, the above results reduce to the familiar Borel--Weil--Bott theorem for vector bundles; see \cite{Dem} and \cite[Theorems 4.1.4 \&  4.1.10]{Wey}.

\begin{remark}[{\cite[Corollaries 5.30 \& 5.39]{J22b}}]
The above theorem implies that corresponding Borel--Weil--Bott theorem for derived Grassmannians $\Grass(\sE;d)$ studied in Example \ref{eg:dGrass}. Specifically, let $d$ be an integer such that $1 \le d \le n$. Let $\alpha = (\alpha_1, \ldots , \alpha_{d})$ and $\beta = (\beta_1, \ldots, \beta_{n-d})$ be two partitions and let $\lambda = (\alpha,\beta)$ be their concatenation. Then $\sV(\alpha, \beta) := (\pi_{(d), \underline{n}})_*(\sL(\lambda))\simeq \dSchur^{\alpha}(\sQ_{\Grass(\sE;d)}) \otimes \dSchur^{\beta}(\sR_{\Grass(\sE;d)})$.
Consequently, the theorem implies the following:
\begin{enumerate}
	\item If $\lambda$ is a partition, $(\pr_{\Grass(\sE;d)})_* (\sV(\alpha,\beta)) \simeq \dSchur^{\lambda}(\sE)$.
	\item If $X$ is defined over $\QQ$, then one of the following two mutually exclusive cases occurs:
	\begin{enumerate}
	\item There exists a pair of integers $1 \le i < j \le n-1$ such that $\lambda_i  - \lambda_j = i - j$. In this case, $(\pr_{\Grass(\sE;d)})_* (\sV(\alpha,\beta))\simeq 0.$
	\item There exists a unique permutation $w \in \foS_n$ such that $w \bigdot \lambda$ is non-increasing. In this case,  there is a canonical equivalence
		$(\pr_{\Grass(\sE;d)})_* (\sV(\alpha,\beta)) \simeq \dSchur^{w \bigdot \lambda}(\sE) [- \ell(w)].$
	\end{enumerate}
\end{enumerate}
\end{remark}

\subsection{Incidence Correspondences}
This subsection studies the incidence correspondences between derived Grassmannians, generalizing the incidence correspondences of the projectivization case \cite[\S 7.1]{J22a}.
Throughout this subsection, we let $X$ be a prestack and  assume that $\sE$ is a perfect complex over $X$ of Tor-amplitude in $[0,1]$ of rank $r \ge 0$. Notice that the shifted dual $\sE^\vee[1]$ is also a perfect complex of Tor-amplitude in $[0,1]$, but has rank $(-r)$.

\begin{definition}[Incidence Correspondences]
\label{def:incidence}
Let $(d_+, d_-) \in \NN^2$ be a pair of integers and consider the derived Grassmannians (Example \ref{eg:dGrass}):
	$$\pr_+ \colon \Grass(\sE;d_+) \to X \quad \text{and} \quad \pr_- \colon \Grass(\sE^\vee[1]; d_-) \to X,$$
with tautological fiber sequences $\sR_\pm \to \pr_\pm^*(\sE) \xrightarrow{\rho_\pm} \sQ_\pm$, where $\sQ_{\pm}$ are universal quotient bundles of rank $d_\pm$, respectively. 
We define the {\em universal incidence locus $\Incidence_{(d_+,d_-)}(\sE)$} to be the derived zero locus of the cosection of the perfect complex $\sQ_+^\vee \boxtimes_X \sQ_-^\vee [1]$ over $\Grass_{d_+}(\sE) \times_X \Grass_{d_-}(\sE^\vee[1])$ defined as the composition
	$$\sQ_+^\vee \boxtimes_X \sQ_-^\vee [1] \xrightarrow{\rho_+^\vee \boxtimes \rho_-^\vee[1]} \pr_+^*(\sE^\vee) \boxtimes_X \pr_-^*(\sE) \xrightarrow{\rm ev} \sO_{ \Grass(\sE;d_+) \times_X \Grass(\sE^\vee[1]; d_-)}.$$
(Here, for complexes $\sM_+$ on $\Grass(\sE;d_+)$ and $\sM_-$ on $\Grass(\sE^\vee[1];d_-)$, we use $\sM_+ \boxtimes_X \sM_-$ to denote the external tensor product $\pr_{1}^* \sM_+ \otimes_{\sO_Z} \pr_{2}^* \sM_-$, where $\pr_i$ are the projections from $\Grass(\sE;d_+) \times_X \Grass(\sE^\vee[1]; d_-)$ to its $i$th factors.)
We will refer to the commutative diagram
	\begin{equation} \label{diagram:Inc:Corr}
	\begin{tikzcd}[row sep= 2.5 em, column sep = 4 em]
		\Incidence_{(d_+,d_-)}(\sE)  \ar{rd}{\widehat{\pr}}
		\ar{r}{r_+}  \ar{d}[swap]{r_-} &  \Grass(\sE;d_+) \ar{d}{\pr_+}\\
		\Grass(\sE^\vee[1];d_-) \ar{r}{\pr_-} & X
	\end{tikzcd}
	\end{equation}
as the {\em incidence diagram}. 
By construction, there is a canonical commutative diagram
	$$
	\begin{tikzcd}
		r_-^*(\sQ_{-}^\vee) [1] \ar{d} \ar{r}{r_-^*(\rho_-^\vee [1])} & \widehat{\pr}^*(\sE) \ar{d}{r_+^*(\rho_+)} \\
		0 \ar{r} & r_+^*(\sQ_+)
	\end{tikzcd}
	$$
in $\Dperf(\Incidence_{(d_+,d_-)}(\sE))$. We consider the following perfect complex 
	$$\sE^{\rm univ}_{(d_+,d_-)} :=\cofib \left(r_-^*(\sQ_{-}^\vee) [1] \to \fib\Big(\widehat{\pr}^*(\sE) \xrightarrow{r_+^*(\rho_+)} r_+^*(\sQ_+) \Big) \right),$$ 
and refer to it as the {\em universal perfect complex} on $\Incidence_{(d_+,d_-)}(\sE)$.
\end{definition}

\begin{example} 
\begin{enumerate}
	\item If $d_-=0$, $\Incidence_{(d_+,0)}(\sE) = \Grass(\sE;d_+)$ and $\sE^{\rm univ}_{(d_+,0)} = \sR_{\Grass(\sE;d_+)}$.
	\item In the universal local situation of Notation \ref{notation:setup:local} (where $X = \underline{\Hom}_{\kk}(\kk^m,\kk^n)$ and $\sE = [\sO_X^m \xrightarrow{\tau} \sO_X^n]$ is the tautological map), the perfect complex $\sE^{\rm univ}_{(d_+,d_-)}$ is canonically represented by a universal two-term complex of vector bundles  
	$\Big[r_-^*\big(R_{\GG_{d_-}^{-}}^\vee\big) \to r_+^*\big(R_{\GG_{d_+}^{+}}\big)\Big]$.
\end{enumerate}
\end{example}

\begin{lemma}
\label{lem:E^univ:incidence}
In the situation of Definition \ref{def:incidence}, we have:
\begin{enumerate}
	\item 
	\label{lem:E^univ:incidence-1}
	There is a canonical equivalence
		$$\sE^{\rm univ}_{(d_+,d_-)} 
		\simeq \fib\left( 
		\cofib \Big(r_-^*(\sQ_{-}^\vee) [1] \xrightarrow{r_-^*(\rho_-^\vee [1])} \widehat{\pr}^*(\sE) \Big) \to  r_+^*(\sQ_+) 
		\right).		
		$$ 
	\item 
	\label{lem:E^univ:incidence-2}
	If $\sE$ has rank $r$ (and Tor-amplitude in $[0,1]$), then $\sE^{\rm univ}_{(d_+,d_-)}$ is a perfect complex over $\Incidence_{(d_+,d_-)}(\sE)$ of Tor-amplitude in $[0,1]$ and rank $(r-d_+ + d_-)$.
	
\end{enumerate}
\end{lemma}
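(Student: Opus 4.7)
The plan is to deduce part (1) from a formal identity in any stable $\infty$-category, and then to combine it with standard rank and Tor-amplitude bookkeeping for part (2). For part (1), I observe that the commutative square from Definition \ref{def:incidence} has the shape
$$
\begin{tikzcd}
A \ar{r}{f} \ar{d} & B \ar{d}{g} \\
0 \ar{r} & D
\end{tikzcd}
$$
in the stable $\infty$-category $\Dqc(\Incidence_{(d_+,d_-)}(\sE))$, with $A = r_-^*(\sQ_-^\vee)[1]$, $B = \widehat{\pr}^*(\sE)$, and $D = r_+^*(\sQ_+)$. For any square of this shape, the natural comparison map
$$\cofib\bigl(A \to \fib(g)\bigr) \longrightarrow \fib\bigl(\cofib(f) \to D\bigr)$$
is an equivalence --- this may be verified by pasting three elementary pushout/pullback squares, or equivalently is a consequence of the octahedral axiom in the underlying homotopy triangulated category. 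Applying this identity to our square yields exactly part (1).

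For part (2), the rank follows by additivity in the cofiber sequence
$$r_-^*(\sQ_-^\vee)[1] \longrightarrow \fib\bigl(\widehat{\pr}^*(\sE) \to r_+^*(\sQ_+)\bigr) \longrightarrow \sE^{\rm univ}_{(d_+,d_-)},$$
using $\rk(r_-^*(\sQ_-^\vee)[1]) = -d_-$, $\rk(\widehat{\pr}^*(\sE)) = r$, and $\rk(r_+^*(\sQ_+)) = d_+$; this gives $\rk(\sE^{\rm univ}_{(d_+,d_-)}) = (r-d_+)-(-d_-) = r - d_+ + d_-$. For the Tor-amplitude I use the second description. Setting $C := \cofib(r_-^*(\sQ_-^\vee)[1] \to \widehat{\pr}^*(\sE))$, so that $\sE^{\rm univ}_{(d_+,d_-)} = \fib(C \to r_+^*(\sQ_+))$, I tensor the fiber sequence $r_-^*(\sQ_-^\vee)[1] \to \widehat{\pr}^*(\sE) \to C$ with an arbitrary discrete module $N$ and read off the long exact sequence on $\pi_*$: since the outer terms have homotopy concentrated in $\{1\}$ and in $[0,1]$ respectively, $\pi_i(C \otimes N)$ is forced to lie in $[0,1]$, so $C$ itself has Tor-amplitude in $[0,1]$.

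The commutativity of the square makes the composite $r_-^*(\sQ_-^\vee)[1] \to \widehat{\pr}^*(\sE) \to r_+^*(\sQ_+)$ null, so the map $r_+^*(\rho_+)$ factors through $C$; because $\rho_+$ is surjective on $\pi_0$ by the defining functor of points of the derived Grassmannian (a property preserved under pullback and under tensoring with any discrete $N$), the induced map $\pi_0(C \otimes N) \to \pi_0(r_+^*(\sQ_+) \otimes N)$ is surjective. Applying a second long exact sequence to $\sE^{\rm univ}_{(d_+,d_-)} \to C \to r_+^*(\sQ_+)$ then bounds $\pi_i(\sE^{\rm univ}_{(d_+,d_-)} \otimes N)$ to the range $[0,1]$: the only candidate for a nonvanishing $\pi_i$ outside this range is $\pi_{-1}$, and it vanishes precisely by the surjectivity just noted. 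The main obstacle is not the formal identity of part (1), which is purely categorical, but rather the clean bookkeeping of Tor-amplitude in part (2); the decisive input is the surjectivity of $\rho_+$ on $\pi_0$, which is where the specific content of the derived Grassmannian enters the argument.
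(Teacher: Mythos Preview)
Your approach is essentially the same as the paper's: both deduce part~(1) by pasting bi-Cartesian squares in a stable $\infty$-category, and both deduce part~(2) from the description in~(1) by showing that $C := \cofib\bigl(r_-^*(\sQ_-^\vee)[1] \to \widehat{\pr}^*(\sE)\bigr)$ has Tor-amplitude in $[0,1]$ and that the induced map $C \to r_+^*(\sQ_+)$ is $\pi_0$-surjective. The paper simply asserts the Tor-amplitude of $C$ and cites \cite[Proposition 7.2.4.23.(2)]{HA} for the fiber; you unpack both steps by hand.

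There is, however, a genuine (if small) gap in your argument for the Tor-amplitude of $C$. From the fiber sequence $A \to B \to C$ with $A$ of Tor-amplitude $\{1\}$ and $B$ of Tor-amplitude $[0,1]$, the long exact sequence alone only forces $\pi_i(C \otimes N) = 0$ for $i \notin [0,2]$: at $i=2$ one has
\[
0 = \pi_2(B \otimes N) \to \pi_2(C \otimes N) \to \pi_1(A \otimes N) \to \pi_1(B \otimes N),
\]
so $\pi_2(C \otimes N)$ is the kernel of the last map, which is not visibly zero from your stated hypotheses. The missing input is the $\pi_0$-surjectivity of $\rho_-$ on the \emph{other} Grassmannian: dualizing the tautological fiber sequence $\sR_- \to \pr_-^*(\sE^\vee[1]) \xrightarrow{\rho_-} \sQ_-$ and shifting identifies $C \simeq r_-^*(\sR_-^\vee)[1]$, and since $\sR_-$ has Tor-amplitude in $[0,1]$ (precisely because $\rho_-$ is $\pi_0$-surjective), so does $\sR_-^\vee[1]$. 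Equivalently, this is what makes $\pi_1(A\otimes N) \to \pi_1(B\otimes N)$ injective. Once you add this observation, your argument goes through and matches the paper's.
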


\begin{proof}
To prove assertion \eqref{lem:E^univ:incidence-1}, consider the following induced commutative diagram
	$$
	\begin{tikzcd}
		r_-^*(\sQ_{-}^\vee) [1]  \ar{d} \ar{r} & \fib(r_+^*(\rho_+)) \ar{d} \ar{r}  &\widehat{\pr}^*(\sE) \ar{d} \\
		0 \ar{r} & \sE^{\rm univ}_{(d_+,d_-)}\ar{r}  \ar{d}& \cofib(r_-^*(\rho_-^\vee [1])) \ar{d} \\
		& 0 \ar{r} & r_+^*(\sQ_+) 
	\end{tikzcd}
	$$
where all three squares are pushouts, hence bi-Cartesian as $\Dqc(\Incidence_{(d_+,d_-)}(\sE))$ is a stable $\infty$-category. This proves \eqref{lem:E^univ:incidence-1}. Since $\cofib(r_-^*(\rho_-^\vee [1]))$ has Tor-amplitude in $[0,1]$ and rank $(r+d_-)$, $r_+^*(\sQ_+)$ is a vector bundle of rank $d_+$, and the natural map $\cofib(r_-^*(\rho_-^\vee [1])) \to r_+^*(\sQ_+)$ is surjective on $\pi_0$, assertion \eqref{lem:E^univ:incidence-2} follows from \eqref{lem:E^univ:incidence-1} (see \cite[Proposition 7.2.4.23.(2)]{HA}). 
\end{proof}

\begin{lemma}
\label{lem:dGrass:incidence}
In the situation of Definition \ref{def:incidence}, we have:
\begin{enumerate}
	\item 
	\label{lem:dGrass:incidence-1}
	The projection $r_+$ of \eqref{diagram:Inc:Corr} identifies $\Incidence_{(d_+,d_-)}(\sE))$ as the rank $d_-$ derived Grassmannian of the perfect complex $\cofib \Big(r_+^*(\rho_+^\vee[1])  \colon \sQ_+^{\vee}[1] \to \sE^\vee[1]\Big)$ over $\Grass(\sE;d_+)$. 
	\item 
	\label{lem:dGrass:incidence-2}
	The projection $r_-$ of \eqref{diagram:Inc:Corr} identifies $\Incidence_{(d_+,d_-)}(\sE))$ as the rank $d_+$ derived Grassmannian of the perfect complex $\cofib\Big(r_-^*(\rho_-^\vee[1]) \colon \sQ_-^{\vee}[1] \to \sE\Big)$ over $\Grass(\sE^\vee[1]; d_-)$. 
\end{enumerate}
Consequently, the projections $r_\pm$ are both proper, quasi-smooth relative derived schemes. 
\end{lemma}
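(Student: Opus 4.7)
The plan is to prove assertion \eqref{lem:dGrass:incidence-1} directly by matching functors of points over $\Grass(\sE;d_+)$, and then deduce \eqref{lem:dGrass:incidence-2} from the evident symmetry $\Incidence_{(d_+,d_-)}(\sE) \simeq \Incidence_{(d_-,d_+)}(\sE^\vee[1])$ coming from $(\sE^\vee[1])^\vee[1] \simeq \sE$ and the symmetry of the defining cosection under the swap of the two tensor factors.

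For \eqref{lem:dGrass:incidence-1}, I would fix a $T$-point $(u_+ \colon \eta^*\sE \to \sP_+)$ of $\Grass(\sE;d_+)$, with $\eta \colon T \to X$ and $\sP_+$ a vector bundle of rank $d_+$, and describe the fibers of both prestacks over it. By the universal property of the derived zero locus of a cosection, a $T$-point of $\Incidence_{(d_+,d_-)}(\sE)$ lying over $u_+$ is the data of a quotient $u_- \colon \eta^*(\sE^\vee[1]) \to \sP_-$ (with $\sP_-$ a rank-$d_-$ vector bundle, surjective on $\pi_0$) together with a null-homotopy of the composition
\[
\sP_-^\vee[1] \xrightarrow{u_-^\vee[1]} \eta^*\sE \xrightarrow{u_+} \sP_+ .
\]
On the other hand, a $T$-point of $\Grass_{\Grass(\sE;d_+)}\!\big(\cofib(\rho_+^\vee[1]); d_-\big)$ over $u_+$ is, by Example \ref{eg:dGrass}, a surjection-on-$\pi_0$ map $v \colon \cofib\!\big(u_+^\vee[1] \colon \sP_+^\vee[1] \to \eta^*\sE^\vee[1]\big) \to \sP_-$ with $\sP_-$ a rank-$d_-$ vector bundle.

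Next, I would apply the universal property of the cofiber to rewrite such a $v$ as the data of a map $u_- \colon \eta^*\sE^\vee[1] \to \sP_-$ together with a null-homotopy of the composition $\sP_+^\vee[1] \to \sP_-$; dualizing, which is an equivalence since $\sP_\pm$ are vector bundles, yields a bijection with null-homotopies of $\sP_-^\vee[1] \to \sP_+$ that carries this composite to the one displayed above. For the surjectivity condition, $\sP_+^\vee[1]$ is concentrated in $\pi_1$, so the long exact sequence of the cofiber provides $\pi_0(\cofib(u_+^\vee[1])) \xrightarrow{\simeq} \pi_0(\eta^*\sE^\vee[1])$, and hence $v$ is surjective on $\pi_0$ if and only if $u_-$ is. By Yoneda this identifies the two prestacks over $\Grass(\sE;d_+)$, proving \eqref{lem:dGrass:incidence-1}. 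Assertion \eqref{lem:dGrass:incidence-2} then follows by applying \eqref{lem:dGrass:incidence-1} to $\sE^\vee[1]$ in place of $\sE$ and interchanging $d_+$ with $d_-$, via the symmetry noted above.

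The final assertion on properness and quasi-smoothness of $r_\pm$ is then immediate from the fact that $\cofib(\rho_\pm^\vee[1])$ is a perfect complex of Tor-amplitude in $[0,1]$ on the appropriate base (combining that $\sE$ and $\sE^\vee[1]$ have Tor-amplitude in $[0,1]$ with the cofiber sequence), together with the general properties of derived Grassmannians recorded in \cite[Proposition 4.38, Corollary 4.47]{J22b}. I expect the only mildly delicate point to be the coherent matching of null-homotopies as $\infty$-categorical data (not merely up to homotopy classes) under the duality $\Hom(\sP_+^\vee[1], \sP_-) \simeq \Hom(\sP_-^\vee[1], \sP_+)$; this amounts to a formal check of the naturality of the evaluation pairing that defines the cosection.
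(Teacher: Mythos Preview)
Your argument is correct and is essentially the same as the paper's: the paper invokes \cite[Proposition 4.19]{J22b}, which characterizes the closed immersion $\Grass(\sF'';d) \hookrightarrow \Grass(\sF;d)$ attached to a cofiber sequence $\sF' \to \sF \to \sF''$ as a derived zero locus, and applies it to $\sQ_+^\vee[1] \to \pr_+^*(\sE^\vee[1]) \to \cofib(\rho_+^\vee[1])$; you have simply unwound the content of that proposition by matching functors of points and null-homotopies directly. Your symmetry reduction for \eqref{lem:dGrass:incidence-2} and the final appeal to Tor-amplitude $[0,1]$ for properness and quasi-smoothness are exactly in line with the paper's reasoning.
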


\begin{proof}
Similar to the projectivization case \cite[Lemma 7.3]{J22a}, assertions \eqref{lem:dGrass:incidence-1} and \eqref{lem:dGrass:incidence-2} follow from the characterizations of closed immersions of the form $\Grass(\sF''; d) \to \Grass(\sF;d)$ between derived Grassmannians induced by cofiber sequences $\sF' \to \sF \to \sF''$ of connective complexes (see \cite[Proposition 4.19]{J22b}).
As a result, the assertion about the properness and quasi-smoothness of the maps $r_\pm$ follow from \cite[Corollary 4.28]{J22b}.
\end{proof}

\begin{remark}[Expected Dimensions and Classical Criteria]
\label{rmk:classical.criteria}
In the situation of Lemma \ref{lem:dGrass:incidence}, assume $X$ has constant dimension $\dim X$, then the quasi-smooth relative derived schemes $\Grass(\sE;d_+)$, $\Grass(\sE^\vee[1];d_-)$ and $\Incidence_{(d_+,d_-)}(\sE)$ over $X$ have virtual dimensions 
	$$\dim X + d_+ (r - d_+), \quad \dim X  -d_- (r +d_-) \quadand \dim X + (d_+ - d_-) + d_+ d_- - d_+^2 - d_-^2,$$
respectively.
If $X$ is a Cohen--Macaulay scheme, then $\Grass(\sE;d_+)$, (resp. $\Grass(\sE^\vee[1];d_-)$, $\Incidence_{(d_+,d_-)}(\sE)$)) is classical if and only if its underlying classical scheme has dimension equal to its virtual dimension (see the proof of \cite[Lemma 6.7]{J21}).
Moreover, if all these schemes are classical, then $\Incidence_{(d_+,d_-)}(\sE)$ is canonically isomorphic to the classical fiber product of $\Grass(\sE;d_+)$ and $\Grass(\sE^\vee[1],d_-)$ over $X$.
\end{remark}

\subsection{Compatibility of Incidence and Flag Correspondences}

\begin{proposition}
\label{prop:forget:incidence.flag}
In the situation of Definition \ref{def:incidence}, we let $d_+' > d_+$ be another integer, then there is a canonical forgetful map
	$$
	{\rm forg} \colon \Flag(\sE; d_+, d_++1, \ldots, d_+') \times_{\Grass(\sE;d_+')} \Incidence_{(d_+', d_-)}(\sE) \to \Incidence_{(d_+,d_-)}(\sE)
	$$
which identifies the domain of ${\rm forg}$ with the derived flag scheme
 	$$\Flag_{\Incidence_{(d_+,d_-)}(\sE)}\left(\sE^{\rm univ}_{(d_+,d_-)}; 1, 2, \ldots, d_+'-d_+\right)$$ 
of the universal perfect complex $\sE^{\rm univ}_{(d_+,d_-)}$ over $\Incidence_{(d_+,d_-)}(\sE)$.
\end{proposition}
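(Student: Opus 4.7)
The plan is to identify both the source $Y := \Flag(\sE; d_+, d_++1, \ldots, d_+') \times_{\Grass(\sE;d_+')} \Incidence_{(d_+', d_-)}(\sE)$ and the prospective target $\foF := \Flag_{\Incidence_{(d_+,d_-)}(\sE)}(\sE^{\rm univ}_{(d_+,d_-)}; 1, 2, \ldots, d_+'-d_+)$ with a single canonical intermediary
\[
\foG := \Flag_{\Grass(\sE^\vee[1]; d_-)}(\sE';\, d_+, d_++1, \ldots, d_+'),
\]
where $\sE' := \cofib\bigl(\sQ_-^\vee[1] \xrightarrow{\rho_-^\vee[1]} \pr_-^*\sE\bigr)$ on $\Grass(\sE^\vee[1]; d_-)$ is the perfect complex of Tor-amplitude $[0,1]$ and rank $r+d_-$ featured in Lemma \ref{lem:dGrass:incidence}\eqref{lem:dGrass:incidence-2}. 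The central input is that lemma's identification $\Grass(\sE'; d) \simeq \Incidence_{(d, d_-)}(\sE)$, valid canonically for every $d \ge 0$ over $\Grass(\sE^\vee[1]; d_-)$.

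For the equivalence $\foG \simeq Y$, I will first apply Proposition \ref{prop:dflag:forget}\eqref{prop:dflag:forget-1} to $\sE'$ with $\bdd'' = (d_+')$, yielding $\foG \simeq \Flag_{\Incidence_{(d_+', d_-)}(\sE)}(\sQ'_{d_+'}; d_+, \ldots, d_+'-1)$, where $\sQ'_{d_+'}$ denotes the universal rank-$d_+'$ quotient of $\sE'$. On $\Incidence_{(d_+', d_-)}(\sE)$ the incidence nullity forces the composite $\pr_+^*\sE \twoheadrightarrow r_+^*\sQ_{d_+'}$ to factor canonically through $\sE'$; by the characterization of closed immersions of derived Grassmannians induced by cofiber sequences (\cite[Proposition 4.19]{J22b}, already invoked in Lemma \ref{lem:dGrass:incidence}), this factorization identifies $\sQ'_{d_+'} \simeq r_+^*\sQ_{d_+'}$. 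Combining this with base change for derived flag schemes and Proposition \ref{prop:dflag:forget}\eqref{prop:dflag:forget-1} applied now to $\sE$ over $X$, I arrive at
\[
\foG \simeq \Flag_{\Grass(\sE; d_+')}(\sQ_{d_+'}; d_+, \ldots, d_+'-1) \times_{\Grass(\sE; d_+')} \Incidence_{(d_+', d_-)}(\sE) \simeq Y.
\]

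For $\foG \simeq \foF$, I will apply Proposition \ref{prop:dflag:forget}\eqref{prop:dflag:forget-2} to $\sE'$ with $\bdd' = (d_+)$, giving $\foG \simeq \Flag_{\Incidence_{(d_+, d_-)}(\sE)}\bigl(\fib(\sE' \to \sQ'_{d_+}); 1, 2, \ldots, d_+' - d_+\bigr)$; by Lemma \ref{lem:E^univ:incidence}\eqref{lem:E^univ:incidence-1} the fiber is precisely $\sE^{\rm univ}_{(d_+, d_-)}$, so this is $\foF$. The composite $Y \simeq \foG \simeq \foF$ defines $\mathrm{forg}$, and a functor-of-points inspection confirms it is the evident forgetful map, sending $(\sE_T \to \sP_{d_+'} \to \cdots \to \sP_{d_+},\, \sE_T^\vee[1] \to \sK_{d_-},\, \text{nullity at } d_+')$ to $(\sE_T \to \sP_{d_+},\, \sE_T^\vee[1] \to \sK_{d_-},\, \text{nullity at } d_+)$; the incidence at $d_+$ is automatic by composition with the epimorphism $\sP_{d_+'} \twoheadrightarrow \sP_{d_+}$.

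The main obstacle is the canonical identification $\sQ'_{d_+'} \simeq r_+^*\sQ_{d_+'}$ on $\Incidence_{(d_+', d_-)}(\sE)$; this relies on the universal property of derived Grassmannians under surjections of complexes on $\pi_0$ (\cite[Proposition 4.19]{J22b}), which is the same input underlying Lemma \ref{lem:dGrass:incidence}. Once that is secured, everything else reduces to direct applications of Proposition \ref{prop:dflag:forget} and the base-change property of derived flag schemes.
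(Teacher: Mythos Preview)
Your proof is correct and follows essentially the same approach as the paper. You introduce the same intermediary object $\foG = \Flag_{\Grass(\sE^\vee[1];d_-)}(\sE'; d_+,\ldots,d_+')$ (the paper calls your $\sE'$ by $\sF$ and your $\foG$ by $Z$), and you invoke the same ingredients in the same order: Lemma~\ref{lem:dGrass:incidence}\eqref{lem:dGrass:incidence-2} to identify the incidence loci as Grassmannians of $\sE'$, Proposition~\ref{prop:dflag:forget}\eqref{prop:dflag:forget-1} for the identification with $Y$, and Proposition~\ref{prop:dflag:forget}\eqref{prop:dflag:forget-2} together with Lemma~\ref{lem:E^univ:incidence}\eqref{lem:E^univ:incidence-1} for the identification with $\foF$. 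Your explicit isolation of the step $\sQ'_{d_+'} \simeq r_+^*\sQ_{d_+'}$ and the functor-of-points sanity check are welcome additions but do not change the underlying strategy.
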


\begin{proof}
Let $\sF : = \cofib\left(\sQ_{-}^\vee[1] \xrightarrow{r_-^*(\rho_-^\vee[1])} \pr_-^*(\sE)\right)$ over $Y := \Grass(\sE^\vee[1]; d_-)$, and consider the following commutative diagram:
	$$
	\begin{tikzcd}
			Z:=\Flag_Y(\sF; d_+, d_++1, \ldots, d_+') \ar{d}{\pi} \ar{r}{\pi'}& \Grass_Y(\sF;d_+') 
			 \ar{d}{\pr'} \\
		 \Grass_{Y}(\sF; d_+) 
		  \ar{r}{\pr} & Y=\Grass(\sE^\vee[1]; d_-),
	\end{tikzcd}
	$$
where the maps $\pi, \pi'$ are the natural forgetful maps between derived flag schemes (\S \ref{subsec:forgetful}), and $\pr, \pr'$ are the natural projections. By virtue of Lemma \ref{lem:dGrass:incidence}, there are canonical equivalences 
	$$\Grass_Y(\sF;d_+') \simeq \Incidence_{(d_+', \,d_-)}(\sE) \quad \text{and} \quad \Grass_{Y}(\sF; d_+) \simeq \Incidence_{(d_+,d_-)}(\sE).$$
Let $r_+ \colon \Incidence_{(d_+, \,d_-)}(\sE)  \to \Grass(\sE; d_+)$ (resp. $r_+' \colon \Incidence_{(d_+', \,d_-)}(\sE)  \to \Grass(\sE; d_+')$) denote the natural projection, and $\sQ_+$ (resp. $\sQ_+'$) the tautological quotient bundle of rank $d_+$ (resp. $d_+'$) over $\Grass(\sE; d_+)$ (resp. $\Grass(\sE; d_+')$). 
By virtue of Proposition \ref{prop:dflag:forget}.\eqref{prop:dflag:forget-1}, the forgetful map $\pi'$ identifies $Z$ as the derived flag scheme 
	$$Z \simeq \Flag_{\Incidence_{(d_+',d_-)}(\sE)}\left(r_+'^*(\sQ_+'); d_+, d_++1, \ldots, d_+' \right).$$ 
Since Proposition \ref{prop:dflag:forget}.\eqref{prop:dflag:forget-1}) also implies that the forgetful map $\Flag(\sE; d_+, d_++1, \ldots, d_+') \to \Grass(\sE; d_+')$ is equivalent to the derived flag bundle of $\sQ_+'$ of type $(d_+, d_++1, \ldots, d_+')$ over $\Grass(\sE; d_+)$, we obtain that $\pi'$ identifies $Z$ with the domain of the map ${\rm forg}$.
On the other hand, by virtue of Proposition \ref{prop:dflag:forget}.\eqref{prop:dflag:forget-2} and the equivalence $\sE^{\rm univ}_{(d_+,d_-)} \simeq \fib\left(\pr^* (\sF) \to r_+^*(\sQ_+)\right)$ of Lemma \ref{lem:E^univ:incidence}.\eqref{lem:E^univ:incidence-1}, the forgetful map $\pi$ identifies $Z$ as the derived flag scheme of $\sE^{\rm univ}_{(d_+,d_-)}$ of type $(1, 2, \ldots, d_+'-d_+)$ over the incidence space $\Incidence_{(d_+,d_-)}(\sE)$. Hence the proposition is proved.
\end{proof}

\begin{corollary}
\label{cor:forg:FlagtoIncidence}
Let $\sE$ be a perfect complex of rank $r \ge 0$ and Tor-amplitude in $[0,1]$, let $d \ge 0$ and $0 \le i \le \min\{d, r\}$ be integers, and let $\lambda = (\lambda_1 \ge  \ldots \ge  \lambda_{r-i})$ be a partition. Then there is a natural forgetful map
	\begin{equation*}
	{\rm forg} \colon \Flag(\sE; d, d+1, \ldots, d+r-i) \times_{\Grass(\sE;d+r-i)} \Incidence_{(d+r-i,d-i)}(\sE) \to \Incidence_{(d,d-i)}(\sE).
	\end{equation*}
which is a proper, quasi-smooth relative derived scheme and induces a canonical equivalence
	$${\rm forg}_{*} \big(\sL_{d+1}^{\lambda_1} \otimes \sL_{d+2}^{\lambda_2} \otimes \ldots \otimes \sL_{d+r-i}^{\lambda_{r-i}}\big) \simeq \dSchur^{\lambda}(\sE^{\rm univ}_{(d,d-i)}),$$
where $\sQ_i$'s are universal quotient bundles of rank $i$ (where $d \le i \le d+r-i$) and $\sL_i = \fib(\sQ_{i} \to \sQ_{i-1})$ are the associated universal line bundles (where $d+1 \le i \le d+r-i$).
\end{corollary}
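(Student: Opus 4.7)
The plan is to reduce the corollary to a direct application of Proposition \ref{prop:forget:incidence.flag} and the derived Borel--Weil--Bott Theorem \ref{thm:BWB}.\eqref{thm:BWB-1}.

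First I specialize Proposition \ref{prop:forget:incidence.flag} to the parameters $d_+ = d$, $d_+' = d + r - i$, and $d_- = d - i$, which are all non-negative (since $i \le d$) and satisfy $d_+' > d_+$ whenever $i < r$. This immediately produces the map ${\rm forg}$ together with an identification of its source as the derived flag scheme $\Flag_{\Incidence_{(d,d-i)}(\sE)}\bigl(\sE^{\rm univ}_{(d,d-i)}; 1, 2, \ldots, r-i\bigr)$. By Lemma \ref{lem:E^univ:incidence}.\eqref{lem:E^univ:incidence-2}, the universal perfect complex $\sE^{\rm univ}_{(d,d-i)}$ has Tor-amplitude in $[0,1]$ and rank exactly $r-d+(d-i)=r-i$, so this is the derived \emph{complete} flag scheme in the sense of Example \ref{eg:dflag}. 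Properness and quasi-smoothness of ${\rm forg}$ are then a formal consequence of the corresponding properties of derived flag schemes of perfect complexes with Tor-amplitude in $[0,1]$ recalled at the start of the section. The boundary case $i=r$ is trivial: the fiber-product source collapses to $\Incidence_{(d,d-r)}(\sE)$ itself, the map ${\rm forg}$ becomes the identity, $\lambda$ is the empty partition, and both sides of the claimed equivalence equal $\sO$.

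The principal task is to match the two natural filtrations on the source $Z := \Flag(\sE; d, d+1, \ldots, d+r-i) \times_{\Grass(\sE;d+r-i)} \Incidence_{(d+r-i,d-i)}(\sE)$. Let $\sQ_{d+k}$ (for $0 \le k \le r-i$) denote the tautological rank-$(d+k)$ quotient of $\sE$ on the flag-scheme factor, pulled back to $Z$, and set $\widetilde{\sQ}_k := \fib(\sQ_{d+k} \to \sQ_d)$ for $0 \le k \le r-i$. Using the cofiber presentation in Lemma \ref{lem:E^univ:incidence}.\eqref{lem:E^univ:incidence-1} and tracing the identification of Proposition \ref{prop:forget:incidence.flag} (which ultimately rests on Proposition \ref{prop:dflag:forget}.\eqref{prop:dflag:forget-2}), one verifies that $\{\widetilde{\sQ}_k\}_{k=0}^{r-i}$ coincides with the universal rank-$k$ quotient filtration of $\sE^{\rm univ}_{(d,d-i)}$ on $\Flag(\sE^{\rm univ}_{(d,d-i)}; \underline{r-i})$. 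Applying the $3 \times 3$ bi-Cartesian property of the stable $\infty$-category $\Dqc(Z)$ to the cofiber sequences $\widetilde{\sQ}_k \to \sQ_{d+k} \to \sQ_d$ then yields canonical equivalences $\fib(\widetilde{\sQ}_k \to \widetilde{\sQ}_{k-1}) \simeq \fib(\sQ_{d+k} \to \sQ_{d+k-1}) = \sL_{d+k}$ for $1 \le k \le r-i$. In other words, the $k$-th universal line bundle of the complete flag scheme agrees with the pulled-back line bundle $\sL_{d+k}$ under the identification.

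With both identifications in place, the pushforward formula becomes an immediate instance of Theorem \ref{thm:BWB}.\eqref{thm:BWB-1}, applied to the partition $\lambda$ (of length at most $r-i$, matching the rank of $\sE^{\rm univ}_{(d,d-i)}$) on the derived complete flag scheme of the perfect complex $\sE^{\rm univ}_{(d,d-i)}$. I expect the main obstacle to be the line-bundle identification in the second paragraph: one has to verify carefully that the two tautological quotient filtrations on $Z$ line up under the identification of Proposition \ref{prop:forget:incidence.flag}. Once this bookkeeping is settled, the rest amounts to a direct invocation of derived Borel--Weil--Bott.
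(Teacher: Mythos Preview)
Your proposal is correct and follows essentially the same approach as the paper: specialize Proposition \ref{prop:forget:incidence.flag} to $(d_+,d_-,d_+')=(d,d-i,d+r-i)$, invoke Lemma \ref{lem:E^univ:incidence}.\eqref{lem:E^univ:incidence-2} to see that $\sE^{\rm univ}_{(d,d-i)}$ has rank $r-i$, and then apply Theorem \ref{thm:BWB}.\eqref{thm:BWB-1}. The paper's proof is a one-line invocation of these results; your additional verification of the line-bundle matching $\sL_{d+k} \simeq \fib(\widetilde{\sQ}_k \to \widetilde{\sQ}_{k-1})$ is a welcome piece of bookkeeping that the paper leaves implicit in the construction of Proposition \ref{prop:forget:incidence.flag}.
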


\begin{proof}
Apply Proposition \ref{prop:forget:incidence.flag} and Theorem \ref{thm:BWB}.\eqref{thm:BWB-1} to the case where $(d_+
,d_-) = (d,d-i)$ and $d_+'=d+r-i$. In this case, $\sE_{(d,d-i)}^{\rm univ}$ is a perfect complex of Tor-amplitude in $[0,1]$ and rank $(r-i)$ (Lemma \ref{lem:E^univ:incidence}.\eqref{lem:E^univ:incidence-2}), and the properness and quasi-smoothness of the map ${\rm forg}$ follow from \cite[Corollary 4.28]{J22b}.
\end{proof}

The above relationship between moduli prestacks yields compatibility result for the induced Fourier--Mukai functors, which we will now investigate. 

\begin{notation}
\label{notation:Phi_lambda}
Assume that we are in the situation of Definition \ref{def:incidence} and let maps $r_\pm$ be defined as in diagram \eqref{diagram:Inc:Corr}. Assume that $r- d_+ + d_- \ge 0$, and let $\lambda = (\lambda_1 \ge \cdots \ge \lambda_{r-d_+ + d_-})$ be a partition and $i \in \ZZ$. We consider Fourier--Mukai functors: 
	\begin{align*}
	\Phi_{(d_+,d_-)}^{\lambda} &= r_{+\, * }\big( r_-^{*}(\blank) \otimes \dSchur^{\lambda}(\sE^{\rm univ}_{(d_+,d_-)}) \big) \colon &\D(\Grass(\sE^\vee[1]; d_-)) \to \D(\Grass(\sE; d_+)). \\
	\Phi_{(d_+,d_-)}^{(i, \lambda)} &= \Phi_{(d_+,d_-)}^{\lambda}(\blank) \otimes \det(\sQ_+)^i \colon & \D(\Grass(\sE^\vee[1]; d_-)) \to \D(\Grass(\sE; d_+)).
	\end{align*}
We will omit the subindex $(d_+,d_-)$ and write $\Phi^{\lambda}$ and $\Phi^{(i,\lambda)}$ instead when there is no confusion. Here, we use the symbol $\D$ to denote any of the following derived $\infty$-categories: $\Dqc$, $\Dpc$, $\Dltpc$ or $\Dperf$. This definition will be justified by the following lemma.
\end{notation}

\begin{lemma} 
\label{lem:Phi_lambda:adjoints}
In the situation of Notation \ref{notation:Phi_lambda}, let $\D = \Dqc$, then the functor $\Phi^{\lambda}$ (resp. $\Phi^{(i,\lambda)}$) admits both a left adjoint $(\Phi^{\lambda})^L$ (resp. $(\Phi^{(i,\lambda)})^L$) and a right adjoint $(\Phi^{\lambda})^R$ (resp. $(\Phi^{(i,\lambda)})^R$). 
Furthermore, all these functors preserve (almost) perfect complexes and locally truncated almost perfect complexes, and commute with arbitrary base change $X' \to X$.
\end{lemma}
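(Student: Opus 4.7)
The strategy is to decompose
$$\Phi^\lambda \;=\; r_{+*} \circ (-\otimes K) \circ r_-^*, \qquad K := \dSchur^\lambda(\sE^{\rm univ}_{(d_+,d_-)}),$$
into its three constituent parts and verify that each of them admits both adjoints, preserves the relevant subcategories, and commutes with base change; the composition then inherits all of these properties. The companion functor $\Phi^{(i,\lambda)}$ differs from $\Phi^\lambda$ only by tensoring with the line bundle $\det(\sQ_+)^i$ on the target, which is an autoequivalence that respects each of $\Dperf$, $\Dpc$, $\Dltpc$ and commutes with base change, so it suffices to treat $\Phi^\lambda$.

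For the adjoints, I would invoke Lemma \ref{lem:dGrass:incidence}: both $r_\pm$ are proper, quasi-smooth relative derived schemes, hence perfect morphisms with invertible relative dualizing complexes $\omega_{r_\pm}$. Grothendieck duality in the derived setting then supplies the adjoint string $r_\pm^* \dashv r_{\pm*} \dashv r_\pm^!$ with $r_\pm^!(-) \simeq r_\pm^*(-)\otimes \omega_{r_\pm}$; in particular $r_\pm^*$ itself acquires a left adjoint, namely $r_{\pm*}(-\otimes \omega_{r_\pm})$. By Lemma \ref{lem:E^univ:incidence}.\eqref{lem:E^univ:incidence-2} the universal complex $\sE^{\rm univ}_{(d_+,d_-)}$ is perfect, and derived Schur functors preserve perfectness by \cite[\S 3]{J22b}, so $K$ is perfect and therefore dualizable; this gives both adjoints of $(-\otimes K)$ as $(-\otimes K^\vee)$. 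Assembling the three layers yields the explicit formulas
\begin{align*}
(\Phi^\lambda)^R(B) &\simeq r_{-*}\bigl(r_+^*(B) \otimes \omega_{r_+} \otimes K^\vee\bigr), \\
(\Phi^\lambda)^L(B) &\simeq r_{-*}\bigl(r_+^*(B) \otimes K^\vee \otimes \omega_{r_-}\bigr).
\end{align*}

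For preservation, pullback along the quasi-smooth (hence bounded-Tor-amplitude) maps $r_\pm$ preserves each of $\Dperf$, $\Dpc$ and $\Dltpc$, and tensoring with the perfect complexes $K$, $K^\vee$, $\omega_{r_\pm}^{\pm 1}$ preserves each of these classes as well. The pushforwards $r_{\pm*}$ preserve $\Dperf$ because $r_\pm$ are proper and perfect, preserve $\Dpc$ by properness, and preserve $\Dltpc$ because quasi-smoothness bounds the cohomological amplitude of $r_{\pm*}$ in both directions. Consequently $\Phi^\lambda$ and its two adjoints all restrict to endofunctors of each of the four derived $\infty$-categories under consideration.

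Finally, for compatibility with base change $X' \to X$, one uses that the formation of $\Grass(\sE;d_\pm)$, of $\Incidence_{(d_+,d_-)}(\sE)$ and of the universal complex $\sE^{\rm univ}_{(d_+,d_-)}$ all commute with arbitrary derived base change by the construction in \cite{J22b}, that derived Schur functors commute with base change by \cite[\S 3]{J22b}, and that proper base change applies to the proper perfect morphisms $r_\pm$ (as well as to their dualizing complexes). The main technical input is Grothendieck duality for proper quasi-smooth relative derived schemes; once this is granted, the remaining verifications are routine bookkeeping within the six-functor formalism of DAG.
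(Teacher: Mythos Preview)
Your argument is correct and follows essentially the same route as the paper's proof: both exploit that $r_\pm$ are proper and quasi-smooth (Lemma \ref{lem:dGrass:incidence}) together with Grothendieck duality to obtain explicit left and right adjoints, and then deduce the preservation and base-change claims. The paper simply packages the verification into a single citation of Lipman--Neeman--Lurie duality (\cite[Theorem 3.7.(3)]{J22a}) and writes the adjoints in the compressed forms $(\Phi^{\lambda})^L = r_{-!}\bigl(r_+^*(\blank)\otimes K^\vee\bigr)$ and $(\Phi^{\lambda})^R = r_{-*}\bigl(r_+^!(\blank)\otimes K^\vee\bigr)$, which agree with your formulas upon substituting $r_{-!}(-)\simeq r_{-*}(-\otimes\omega_{r_-})$ and $r_+^!(-)\simeq r_+^*(-)\otimes\omega_{r_+}$.
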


\begin{proof}
The left and right adjoints of $\Phi^\lambda$ can be given explicitly by the formula
	\begin{align*}
	(\Phi^{\lambda})^L &= r_{-\,!} \left(r_+^*(\blank) \otimes  \dSchur^{\lambda}(\sE^{\rm univ}_{(d_+,d_-)})^\vee\right) \colon & \D(\Grass(\sE; d_+)) \to \D(\Grass(\sE^\vee[1]; d_-)). \\
	(\Phi^{\lambda})^R & = r_{-\,*} \left(r_+^!(\blank) \otimes  \dSchur^{\lambda}(\sE^{\rm univ}_{(d_+,d_-)})^\vee\right) \colon & \D(\Grass(\sE; d_+)) \to \D(\Grass(\sE^\vee[1]; d_-)). 
\end{align*}
Here, $r_{-\,!}$ denotes the left adjoint of $r_{-}^*$, and $r_{+}^!$ denotes the right adjoint of $r_{+\,*}$. Since $r_\pm$ are proper and quasi-smooth (Lemma \ref{lem:dGrass:incidence}), the desired assertions follow from Lipman--Neeman--Lurie's version of Grothendieck duality (see \cite[Theorem 3.7.(3)]{J22a}).
\end{proof}

\begin{notation}
\label{notation:Psi_k}
Let $\sE$ be a perfect complex of Tor-amplitude in $[0,1]$ and rank $r \ge 1$, and let $d$ be an integer $0 \le d \le r-1$. We consider the following commutative diagram
	\begin{equation}\label{diag:corr:flagd,d+1}
	\begin{tikzcd}[column sep = 4 em, row sep = 2.5 em]
		\Flag(\sE;d,d+1) \ar{d}{p_{+}} \ar{r}{p_{-}} 
		 & \Grass(\sE;d+1) \ar{d}{\pr_{\Grass(\sE;d+1) }} \\
		\Grass(\sE;d) \ar{r}{\pr_{\Grass(\sE;d) }} & X,
	\end{tikzcd}
	\end{equation}
where $p_\pm$ are the natural forgetful maps (\S \ref{subsec:forgetful}). Let $\D$ denote $\Dqc$, $\Dpc$, $\Dltpc$ or $\Dperf$. We consider Fourier--Mukai functors: 
\begin{align*}
	&\Psi_{\sL_{d+1}^k}  = p_{+\, *} (p_-^*(\blank) \otimes \sL_{d+1}^k) \colon &\D(\Grass(\sE;d+1)) \to \D(\Grass(\sE;d)). \\
	&\Psi_{k}  = p_{+\, *} \, p_-^*(\blank) \otimes \det(\sQ_{d})^{\otimes k} \colon 
	& \D(\Grass(\sE;d+1)) \to \D(\Grass(\sE;d)). 
\end{align*}
Here $\sQ_i$'s are universal quotient bundles of rank $i$ for $i=d,d+1$, and $\sL_{d+1} = \fib(\sQ_{d+1} \to \sQ_{d})$. 
\end{notation}

\begin{remark}
\label{rmk:flag:p_pm}
Proposition \ref{prop:dflag:forget} implies that the projection $p_+$ of diagram \eqref{diag:corr:flagd,d+1} identifies $\Flag(\sE;d,d+1)$ as the derived projectivization of the perfect complex $\sR_{\Grass(\sE;d)}$ over $\Grass(\sE;d)$ with $\sO_{p_+}(1) \simeq \sL_{d+1}$, where $\sR_{\Grass(\sE;d)} = \fib\left(\pr_{\Grass(\sE;d)}^*(\sE) \to \sQ_{\Grass(\sE;d)}\right)$ has Tor-amplitude in $[0,1]$ and rank $(r-d)$. The projection $p_{-}$ identifies $\Flag(\sE; d,d+1)$ as the derived projectivization $\PP(\sQ_{\Grass(\sE; d+1)}^\vee)$ of the rank $(d+1)$ vector bundle $\sQ_{\Grass(\sE;d+1)}^\vee$ over $\Grass(\sE; d+1)$, with $\sO_{p_{-}}(1) \simeq \sL_{d+1}^\vee$; or equivalently, as the derived Grassmannian $\Grass(\sQ_{\Grass(\sE;d+1)}; d)$ of $\sQ_{\Grass(\sE;d+1)}$ over $\Grass(\sE; d+1)$, with universal quotient bundle $\sQ_d$.
\end{remark}


As a consequence of Corollary \ref{cor:forg:FlagtoIncidence}, we have the following compatibility result for the Fourier--Mukai functors considered in Notations \ref{notation:Phi_lambda} and \ref{notation:Psi_k}:

\begin{corollary}
\label{cor:forg:FM:Incidence}
In the situation of Corollary \ref{cor:forg:FlagtoIncidence}, let $\D$ denote $\Dqc$, $\Dpc$, $\Dltpc$ or $\Dperf$, assume that $\lambda = (i \ge \lambda_1 \ge  \ldots \ge  \lambda_{r-i} \ge 0) \in B_{r-i,i}$, 
and let 
	$$\Phi^{(i, \lambda)}_{(d,d-i)} \colon \D(\Grass(\sE^\vee[1]; d-i)) \to \D(\Grass(\sE; d)) $$
denote the functor defined in Notation \ref{notation:Phi_lambda} in the case $(d_+,d_-) = (d,d-i)$. Let $\Psi_k$'s be defined as in Notation \ref{notation:Psi_k}. Then there is a canonical equivalence of functors:
	$$\Phi^{(i, \lambda)}_{(d,d-i)}  \simeq  \Psi_{i-\lambda_1} \circ \cdots \circ \Psi_{\lambda_{r-1} - \lambda_{r-i-1}} \circ \det(\sQ_{d+r-i})^{\lambda_{r-i}} \circ  \Phi_{(d+r-i,d-i)}^{(0)}.$$
\end{corollary}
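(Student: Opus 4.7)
The plan is to express both sides of the asserted equivalence as a single pushforward along $\pi_{\rm flag} \colon F := \Flag(\sE; d, d+1, \ldots, d+r-i) \to \Grass(\sE; d)$ of an explicit line-bundle twist on $F$, and then to match the resulting twists.

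First, substituting the identity $\dSchur^\lambda(\sE^{\rm univ}_{(d,d-i)}) \simeq {\rm forg}_*(\sL_{d+1}^{\lambda_1} \otimes \cdots \otimes \sL_{d+r-i}^{\lambda_{r-i}})$ from Corollary \ref{cor:forg:FlagtoIncidence} into the definition of $\Phi^{(i,\lambda)}_{(d, d-i)}$, applying the projection formula along ${\rm forg}$, and using base change on the Cartesian square
$$\begin{tikzcd}[column sep=3em]
W \ar[r] \ar[d] & \Incidence_{(d+r-i, d-i)}(\sE) \ar[d, "r'_+"] \\
F \ar[r, "\pi'_{\rm flag}"] & \Grass(\sE; d+r-i)
\end{tikzcd}$$
(with $W$ the derived fiber product and $\pi'_{\rm flag}$ the other forgetful map from $F$), one obtains
$$\Phi^{(i,\lambda)}_{(d,d-i)}(\shF) \simeq \pi_{{\rm flag}, *}\bigl((\pi'_{\rm flag})^{*}(\shG) \otimes \sL_{d+1}^{\lambda_1} \otimes \cdots \otimes \sL_{d+r-i}^{\lambda_{r-i}}\bigr) \otimes \det(\sQ_d)^{i}, \qquad (\star)$$
where $\shG := \Phi^{(0)}_{(d+r-i,d-i)}(\shF)$.

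Next, I compute the right-hand side of the corollary by unfolding the iterated composition of $\Psi_{k_j}$'s, with $k_0 = i - \lambda_1$ and $k_j = \lambda_j - \lambda_{j+1}$ for $1 \le j \le r-i-1$. Repeated application of base change on the Cartesian squares of Remark \ref{rmk:dflag.fibersqure}, which identify successive flag correspondences, together with the projection formula for the intermediate $\det(\sQ_{d+j})^{k_j}$-twists, expresses the composition as a single pushforward along $\pi_{\rm flag}$ of $(\pi'_{\rm flag})^{*}(\shG)$ twisted on $F$ by
$$\det(\sQ_{d+r-i})^{\lambda_{r-i}} \otimes \bigotimes_{j=1}^{r-i-1} \det(\sQ_{d+j})^{k_j},$$
with an additional outer tensor by $\det(\sQ_d)^{k_0}$ pulled outside the pushforward via the projection formula.

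Matching this with $(\star)$ is the main technical point and amounts to bookkeeping of twists. Using the identity $\det(\sQ_{d+j}) \simeq \det(\sQ_d) \otimes \sL_{d+1} \otimes \cdots \otimes \sL_{d+j}$ on $F$, each $\det(\sQ_{d+j})^{k_j}$ splits as a product of $\det(\sQ_d)$- and $\sL_{d+a}$-powers. With the chosen $k_j$'s, the accumulated exponent on each $\sL_{d+a}$ (for $1 \le a \le r-i$) becomes $\lambda_{r-i} + \sum_{j=a}^{r-i-1} k_j = \lambda_a$, while the accumulated exponent on $\det(\sQ_d)$ becomes $\lambda_{r-i} + \sum_{j=0}^{r-i-1} k_j = i$. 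Pulling the $\det(\sQ_d)^{i}$-factor outside the pushforward via the projection formula reproduces $(\star)$. The main obstacle is purely combinatorial; a clean induction on $r-i$, with base case $r-i=1$ reducing to the projection formula on the derived projective bundle $\Flag(\sE; d, d+1) \to \Grass(\sE; d)$ of Remark \ref{rmk:flag:p_pm}, handles it.
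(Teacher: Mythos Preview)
Your proof is correct and follows essentially the same approach as the paper. Both arguments use Corollary \ref{cor:forg:FlagtoIncidence}, repeated base change via Remark \ref{rmk:dflag.fibersqure}, the projection formula, and the identity $\det(\sQ_{d+j}) \simeq \det(\sQ_d)\otimes \sL_{d+1}\otimes\cdots\otimes\sL_{d+j}$; the only difference is organizational---the paper first converts the $\Psi_{k_j}$'s into $\Psi_{\sL_{d+j}^{\lambda_j}}$'s and then works on the fiber product $W$, whereas you push everything down to $F$ via the Cartesian square and do the determinant bookkeeping at the end.
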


\begin{proof}
For each $d + 1 \le k \le d+r-i$, we have $\det(\sQ_k) \simeq \sL_k \otimes \det(\sQ_{k-1})$. By induction, we obtain a canonical equivalence of functors from $\D(\Grass(\sE; d+r-i))$ to $\D(\Grass(\sE;d))$:
	$$(\otimes \det(\sQ_d)^i )\circ \Psi_{\sL_{d+1}^{\lambda_1}} \circ \cdots \circ \Psi_{\sL_{d+r-i}^{\lambda_{r-i}}} \simeq \Psi_{i-\lambda_1} \circ \cdots \circ \Psi_{\lambda_{r-1} - \lambda_{r-i-1}} \circ (\otimes \det(\sQ_{d+r-i})^{\lambda_{r-i}}),$$
where $\Psi_{\sL_{d+1}^k}$'s are defined in Notation \ref{notation:Psi_k}. Consequently, it suffices to prove that there is a canonical equivalence of functors
	$$\Phi^{\lambda}_{(d,d-i)} \simeq \Psi_{\sL_{d+1}^{\lambda_1}} \circ \cdots \circ \Psi_{\sL_{d+r-i}^{\lambda_{r-i}}} \circ \Phi_{(d+r-i,d-i)}^{(0)} \colon \D(\Grass(\sE^\vee[1]; d-i)) \to \D(\Grass(\sE; d)).$$
	Consider the following commutative diagram:
	$$
	\begin{tikzcd}[column sep = {15em,between origins}]
		& \Flag(\sE; d, d+1, \ldots, d+r-i) \times_{\Grass(\sE;d+r-i)} \Incidence_{(d+r-i,d-i)}(\sE) \ar{d}{\rm forg} \ar{ldd}[swap]{\pi_-} \ar{rdd}{\pi_+} &  \\
		 & \Incidence_{(d,d-i)}(\sE) \ar{ld}{r_-} \ar{rd}[swap]{r_+} & \\
		\Grass(\sE^\vee[1]; d-i) & & \Grass(\sE; d),
	\end{tikzcd}
	$$
where the vertical map ${\rm forg}$ is the forgetful map in Corollary \ref{cor:forg:FlagtoIncidence} and $r_\pm$ are the projection maps of the incidence diagram \eqref{diagram:Inc:Corr}. 
By repeated use of Remark \ref{rmk:dflag.fibersqure}, we see that the composite functor $\Psi_{\sL_{d+1}^{\lambda_1}} \circ \cdots \circ \Psi_{\sL_{d+r-i}^{\lambda_{r-i}}} \circ \Phi_{(d+r-i,d-i)}^{(0)}$ is equivalent to the functor
	\begin{align*}
	&\pi_{+\,*} \left(\pi_-^*(\blank) \otimes (\sL_{d+1}^{\lambda_1} \otimes \sL_{d+2}^{\lambda_2} \otimes \ldots \otimes \sL_{d+r-i}^{\lambda_{r-i}})\right) \\
	&\simeq r_{+ \,*} \circ {\rm forg}_{*} \left( {\rm forg}^* \circ r_-^*(\blank) \otimes  (\sL_{d+1}^{\lambda_1} \otimes \sL_{d+2}^{\lambda_2} \otimes \ldots \otimes \sL_{d+r-i}^{\lambda_{r-i}})\right) \\
	&\simeq r_{+ \,*} \left(r_-^*(\blank) \otimes  {\rm forg}_{*}  (\sL_{d+1}^{\lambda_1} \otimes \sL_{d+2}^{\lambda_2} \otimes \ldots \otimes \sL_{d+r-i}^{\lambda_{r-i}})\right) \\
	& \simeq r_{+\,*} \left(r_-^*(\blank) \otimes \dSchur^{\lambda}(\sE^{\rm univ}_{(d,d-i)}) \right) = \Phi_{(d,d-i)}^{\lambda} (\blank),
	\end{align*}
where the second equivalence follows from projection formula, and the third equivalence follows from Corollary \ref{cor:forg:FlagtoIncidence}. Hence the corollary is proved.
\end{proof}

\section{Semiorthogonal Decompositions of Derived Grassmannians}

This section establishes the main result of this paper, namely Theorem \ref{thm:SOD}. 
In \S \ref{sec:SOD}, we present the result and reduce it to the universal local situation, which will be addressed in \S \ref{sec:local}.

\subsection{Semiorthogonal Decompositions}
\label{sec:SOD}
Let $r \ge 0$ be an integer and let $0 \le i \le r$, so that $B_{r-i, i}$ denotes the set of partitions $\lambda = (i \ge \lambda_1 \ge \lambda_2 \ge \cdots \lambda_{r-i} \ge 0)$. 

\begin{notation}
\label{notation:differene.order}
We define a total order $<$ on the set $\shP_r = \{ (i, \lambda) \mid 0 \le i \le r, \lambda \in B_{r-i,i}\}$ as follows: for any $(i,\lambda), (j,\mu) \in \shP_r$, we write $(i, \lambda) < (j, \mu)$, if $(i-\lambda_1, \lambda_1 - \lambda_2, \ldots, \lambda_{i-1} - \lambda_{i}, \lambda_{i}) <_{\rm lex} (j-\mu_1, \mu_1 - \mu_2, \ldots, \mu_{j-1} - \mu_j, \mu_j)$ in the lexicographical order $<_{\rm lex}$. 
\end{notation}

Notice that if $i=j$, then for $\lambda,\mu \in B_{r-i,i}$, we have $(i,\lambda) < (i, \mu)$ if and only if $\lambda >_{\rm lex} \mu$ (that is, $\lambda$ is smaller than $\mu$ in the {\em opposite} lexicographical order of partitions in $B_{r-i,i}$).

The goal of this section is to establish the following theorem:

\begin{theorem}[Semiorthogonal Decompositions of Derived Grassmannians]
\label{thm:SOD}
Let $X$ be a prestack defined over $\QQ$, let $\sE$ be a perfect complex of Tor-amplitude $[0,1]$ and rank $r \ge 0$ over $X$, let $d \ge 1$ be an integer, and let $\D$ be either $\Dqc$, $\Dpc$, $\Dltpc$ or $\Dperf$.
For any integer $0 \le i \le \min\{r, d\}$ and any partition $\lambda \in B_{r-i,i}$, we let $\Phi^{(i, \lambda)} = \Phi^{(i,\lambda)}_{(d,d-i)}$ denote the functor defined in Notation \ref{notation:Phi_lambda} in the case $(d_+,d_-) = (d,d-i)$, that is:
	\begin{align*}
	&\Phi^{(i, \lambda)} \colon \D(\Grass(\sE^\vee[1];d-i)) \to \D(\Grass(\sE;d)) \\
	&\sF \mapsto r_{+\,*} \big(r_-^*(\sF) \otimes \dSchur^{\lambda}(\sE^{\rm univ}_{(d,d-i)}) \big)\otimes \det (\sQ_{\Grass(\sE;d)})^{i}.
\end{align*}
Then $\Phi^{(i, \lambda)}$ is fully faithful.  Moreover, these functors $\Phi^{(i, \lambda)}$, where $0 \le i \le \min\{r, d\}$ and $\lambda \in B_{r-i,i}$, induce a semiorthogonal decomposition
	\begin{align*}
	\D(\Grass(\sE;d)) 
	=
 	\Big \langle \Im \big(\Phi^{(i,\lambda)} \big)
	\mid 0 \le i \le \min\{r, d\}, \, \lambda \in B_{r-i,i}
	\Big \rangle,
\end{align*}
with semiorthogonal order given by the total order $<$ defined in Notation \ref{notation:differene.order}. Specifically, $\Map\left(\Im (\Phi^{(i,\lambda)}), \Im (\Phi^{(j,\mu)})\right) \simeq 0$ if $(j,\mu) <(i,\lambda)$.
\end{theorem}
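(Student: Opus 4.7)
The plan is to reduce to a universal local situation and then establish the decomposition there by an explicit kernel computation. Since each $\Phi^{(i,\lambda)}$ together with both of its adjoints commutes with arbitrary base change and preserves the relevant subcategories (Lemma~\ref{lem:Phi_lambda:adjoints}), and since every perfect complex of Tor-amplitude $[0,1]$ and rank $r$ is, Zariski-locally on $X$, represented by a two-term complex $[\sO^m \xrightarrow{\tau} \sO^n]$ with $n-m=r$, the fully faithfulness, semiorthogonality, and generation statements all descend from the universal local case $X = \underline{\Hom}_\kk(\kk^m,\kk^n)$ with tautological $\tau$ (Notation~\ref{notation:setup:local}). In this universal situation, $X$ is a smooth affine space over $\QQ$, and by Remark~\ref{rmk:classical.criteria} the derived Grassmannians $\Grass(\sE;d)$, $\Grass(\sE^\vee[1];d-i)$ and the incidence loci $\Incidence_{(d,d-i)}(\sE)$ are all classical (or very mildly derived) with transparent stratifications, so the required computations become concrete.

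In the universal local case, I would first establish semiorthogonality and fully faithfulness by computing the kernel of $(\Phi^{(j,\mu)})^R \circ \Phi^{(i,\lambda)}$. By adjunction, base change, and the projection formula, this composite is the Fourier--Mukai transform along the derived fiber product
$$
\Incidence_{(d,d-i)}(\sE) \times_{\Grass(\sE;d)} \Incidence_{(d,d-j)}(\sE),
$$
twisted by $\dSchur^{\lambda}(\sE^{\rm univ}_{(d,d-i)}) \otimes \dSchur^{\mu}(\sE^{\rm univ}_{(d,d-j)})^{\vee}$ and appropriate determinantal twists. Iterating Lemma~\ref{lem:dGrass:incidence} and Proposition~\ref{prop:forget:incidence.flag}, this fiber product is itself a derived flag scheme of $\sE^{\rm univ}_{(d,d-i)}$, and it can be further replaced by a complete flag scheme. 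The derived Borel--Weil--Bott theorem (Theorem~\ref{thm:BWB}) then evaluates each relevant pushforward in terms of derived Schur functors of $\sE$, and combining with Cauchy's formula and the derived Littlewood--Richardson rule from \cite{J22b} gives the vanishing for $(j,\mu) < (i,\lambda)$ and the identity endofunctor for $(i,\lambda) = (j,\mu)$. Alternatively, Corollary~\ref{cor:forg:FM:Incidence} lets one factor each $\Phi^{(i,\lambda)}$ through compositions of the flag-correspondence functors $\Psi_k$, so the same calculation can be set up inductively from the projectivization/flag case, where it reduces to Orlov's and Kapranov's classical results.

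For generation, the strategy is to produce a filtration of the structure sheaf of the diagonal of $\Grass(\sE;d)$ whose associated graded pieces lie in the images of the $\Phi^{(i,\lambda)}$. The natural source is a Lascoux-type resolution on the complete flag scheme $\Flag_X(\sE; 1, 2, \ldots, d)$: the Cauchy decomposition of $\dSchur^{\lambda}$ applied to the universal two-term complex, together with the generalized Illusie isomorphism of \cite[Proposition~3.34]{J22b} identifying derived Schur functors with the Schur complexes of \cite{ABW, BHL+}, produces an explicit complex whose terms are twists of Schur functors of the tautological quotients, which then push down through the flag-correspondence diagram to exhaust every object. I expect the main obstacle to be exactly this step: Lemma~\ref{lem:flag:Lascoux}.\eqref{lem:flag:Lascoux-2} (the sole source of the characteristic-zero hypothesis) controls the exactness of the Lascoux complex, and one must verify that these local Schur-complex models glue coherently into a relative resolution over $\Grass(\sE;d)$ and that the resulting filtration really does generate under the various boundedness/finiteness hypotheses. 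Once generation is verified in one version (say $\D = \Dperf$), the corresponding statements for $\D \in \{\Dqc, \Dpc, \Dltpc, \Dperf\}$ follow formally from the base-change compatibility of $(\Phi^{(i,\lambda)})^L, (\Phi^{(i,\lambda)})^R$ recorded in Lemma~\ref{lem:Phi_lambda:adjoints} and standard $\Ind$-completion arguments.
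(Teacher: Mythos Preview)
Your reduction to the universal local case (Notation~\ref{notation:setup:local}) and the passage between the four flavors of $\D$ via $\Ind$-completion and Lemma~\ref{lem:Phi_lambda:adjoints} match the paper exactly. The divergence is in how the local statement is proved.

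The paper does \emph{not} compute the composite kernels $(\Phi^{(j,\mu)})^R\circ\Phi^{(i,\lambda)}$ directly, nor does it resolve the diagonal. Instead it exploits that in the universal local situation the category $\D(\Grass(\sE;d))$ has explicit generators $\{\dSchur^{\lambda}(R_{\GG_d^+}^\vee)\}_{\lambda\in B_{n-d,d}}$ coming from Kapranov's collection on the Grassmannian factor (Lemma~\ref{lem:generators}). It then computes $\Phi,\Phi^L$ and $\Psi,\Psi^L$ \emph{on these generators} (Lemmas~\ref{lem:flip}, \ref{lem:flag:Lascoux}), and builds the semiorthogonal decomposition by a recursive process: Corollary~\ref{cor:induction:pattern} decomposes each subcategory $\shB_{n-d,k}=\langle\dSchur^{\lambda}(R_{\GG_d^+}^\vee):\lambda\in B_{n-d,k}\rangle$ into images of $\Psi_i$'s applied to smaller $\shB$'s, and iterating this (the process~$(*)$ in \S\ref{subsec:proof:SOD}) terminates in images of $\Phi^{(0)}_{(d+r-i,d-i)}$, which are finally identified with the $\Phi^{(i,\lambda)}$ via Corollary~\ref{cor:forg:FM:Incidence}. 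Fully faithfulness, semiorthogonality, and generation all fall out of this single recursion.

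Your direct approach has a concrete gap: the claim that $\Incidence_{(d,d-i)}(\sE)\times_{\Grass(\sE;d)}\Incidence_{(d,d-j)}(\sE)$ is a derived flag scheme of $\sE^{\rm univ}_{(d,d-i)}$ is not what Lemma~\ref{lem:dGrass:incidence} or Proposition~\ref{prop:forget:incidence.flag} give. By Lemma~\ref{lem:dGrass:incidence}.\eqref{lem:dGrass:incidence-1}, each factor is a Grassmannian over $\Grass(\sE;d)$ of the complex $\cofib(\sQ_+^\vee[1]\to\sE^\vee[1])$, so the fiber product is a \emph{product of two such Grassmannians}, not a flag scheme of $\sE^{\rm univ}$; Borel--Weil--Bott does not apply directly, and one would need a further Koszul/Cauchy decomposition to proceed. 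Likewise, your generation strategy via a Lascoux resolution of the diagonal is plausible but is not carried out in the paper and would require substantially more work than the generator-by-generator argument of Corollary~\ref{cor:induction:pattern}~$(c)$. Your ``alternative'' via $\Psi_k$'s is the correct thread to pull; following it leads precisely to the paper's recursive scheme.
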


Notice that when fixing $d$, the subindices ``$(d,d-i)$" of $\Phi^{(i,\lambda)}_{(d,d-i)}$ are uniquely determined by the superscripts ``$(i,\lambda)$". Therefore, there is no ambiguity in writing $\Phi^{(i, \lambda)}$ for $\Phi^{(i,\lambda)}_{(d,d-i)}$.

\begin{example}
	If $r=4$ and $d \ge 4$, we have a semiorthogonal decomposition 
	\begin{multline*}
		\D(\Grass(\sE;d)) = 
		\Big \langle
	\Im \big(\Phi^{(0,(0))}\big), \Im \big(\Phi^{(1,(1,1,1))}\big), \Im \big(\Phi^{(1,(1,1))}\big),  \Im \big(\Phi^{(2,(2,2))}\big), \\
	\Im \big(\Phi^{(1,(1))}\big), \Im \big(\Phi^{(2,(2,1))}\big), \Im \big(\Phi^{(2,(2))}\big), \Im \big(\Phi^{(3,(3))}\big), \Im \big(\Phi^{(1,(0))}\big), 
	 \Im \big(\Phi^{(2,(1,1))}\big), 
	 \\
	 \Im \big(\Phi^{(2,(1))}\big), \Im \big(\Phi^{(3,(2))}\big), \Im \big(\Phi^{(2,(0))}\big), \Im \big(\Phi^{(3,(1))}\big), \Im \big(\Phi^{(3,(0))}\big), \Im \big(\Phi^{(4,(0))}\big)
	 \Big \rangle.
		\end{multline*}
\end{example}

\begin{proof}[Proof of Theorem \ref{thm:SOD}, Part 1]
Let $\Phi_1, \Phi_2, \ldots, \Phi_N$ be all the functors in 
	$$\{\Phi^{(i,\lambda)} \mid 0 \le i \le \min\{r, d\}, \lambda \in B_{r-i,i}\}$$
listed in ascending order with respect to the total order $<$ on the superscrips $(i,\lambda)$, where $N = \sum_{i=0}^{\min\{r,d\}} \binom{r}{i}$. For each $1 \le j \le N$, we let $\foR_j$ denote the endofunctor $\fib(\id \to \Phi_j \circ \Phi_j^L)$ of $\D(\Grass(\sE;d))$, where $\Phi_j^L$ denotes the left adjoint of $\Phi_j$. Consequently, there is a canonical filtered sequence in $\Fun\left(\D(\Grass(\sE;d)), \D(\Grass(\sE;d))\right)$
	$$\foR_N \circ \foR_{N-1} \circ \cdots \circ \foR_1 \to \foR_{N-1} \circ \cdots \circ \foR_1  \to \cdots \to \foR_2 \circ \foR_1 \to \foR_1 \to \id,$$
where $\cofib(\foR_1 \to \Id) \simeq \Phi_1 \circ \Phi_1^L \colon \D(\Grass(\sE;d))) \to \Im(\Phi_1)$, and for each $2 \le j \le N$, 
	$$\pr_j : = \cofib\left(\foR_j \circ \foR_{j-1} \circ \cdots \circ \foR_1 \to \foR_{j-1} \circ \cdots \circ \foR_1\right) \simeq \Phi_j \circ \Phi_j^L \circ (\foR_{j-1} \circ \cdots \circ \foR_1)$$ 
defines a functor from $\D(\Grass(\sE;d))$ to $\Im(\Phi_j)$. 

Therefore, to establish the desired semiorthogonal decomposition, it is equivalent to prove the following assertions about the functors $\Phi_j$, $\Phi_j^L$ and $\foR_j$ (for the corresponding category $\D$):

\begin{enumerate}[ref=\alph*,label=(\alph*)]
	\item 
	\label{cond:thm:SOD:ff}
	Fully-faithfulness: For each $1 \le j \le N$, the counit map $\Phi_j^L \circ \Phi_j \to \id$ is an equivalence.
	\item 
	\label{cond:thm:SOD:so}
	Semiorthogonality: For all $1 \le j < k \le N$, $\Phi_j^L \circ \Phi_k \simeq 0$.
	\item 
	\label{cond:thm:SOD:gen}
	Generation: $\foR_N \circ \foR_{N-1} \circ \cdots  \circ \foR_1 \simeq 0$. 
\end{enumerate}
We have the following observations:
\begin{itemize}
	\item Since the assertions \eqref{cond:thm:SOD:ff}, \eqref{cond:thm:SOD:so} and \eqref{cond:thm:SOD:gen}, regarded as properties for the pair $(X, \sE)$, are local with respect to Zariski topology, we may assume that $X$ is a derived affine scheme $\Spec A$, where $A \in \CAlgDelta$, and $\sE$ is the cofiber of a map $\sigma \colon A^m \to A^n$ between finite local free sheaves, where $m,n \ge 0$ are integers such that $n-m=r$.
	\item Given that the functors $\Phi_j, \Phi_j^L$, and hence $\foR_j$, preserve all small colimits and perfect objects (Lemma \ref{lem:Phi_lambda:adjoints}), and that for any quasi-compact, quasi-separated derived scheme $Y$, we have $\Dqc(Y) \simeq \Ind(\Dperf(Y))$, to prove the assertions \eqref{cond:thm:SOD:ff}, \eqref{cond:thm:SOD:so} and \eqref{cond:thm:SOD:gen} in the case where $\D = \Dqc$ and $X = \Spec A$, it suffices to verify them in the case where $\D = \Dperf$ and $X = \Spec A$, and vice versa. 
	\item Since the functors $\Phi_j, \Phi_j^L$, and hence $\foR_j$, preserve almost perfect objects and locally truncated almost perfect objects (Lemma \ref{lem:Phi_lambda:adjoints}), the assertions \eqref{cond:thm:SOD:ff}, \eqref{cond:thm:SOD:so} and \eqref{cond:thm:SOD:gen} in the cases $\D = \Dpc$ and $\D=\Dltpc$ can be deduced from the case $\D = \Dqc$.
\end{itemize}
Consequently, it suffices to consider the case where $\D = \Dperf$, $X = \Spec A$, where $A \in \CAlgDelta$, and $\sE = \cofib(\sigma \colon A^m \to A^n)$, where $n-m = r$. Moreover, as the formation of the assertions \eqref{cond:thm:SOD:ff}, \eqref{cond:thm:SOD:so} and \eqref{cond:thm:SOD:gen} commutes with arbitrary base change (as discussed in Lemma \ref{lem:Phi_lambda:adjoints}), we may assume that $X = \Hom_\kk(\kk^m,\kk^n)$ is the total space of $\kk$-homomorphisms from $\kk^m$ to $\kk^n$ and $\sE \simeq [\sO_X^m \xrightarrow{\tau} \sO_X^n]$, where $\kk$ is an ordinary commutative ring and $\tau$ is the tautological map; in this case, the desired assertions are established in the next subsection (see \S \ref{subsec:proof:SOD}).
\end{proof}

\begin{remark}[Dual Exceptional Sequences]
\label{rmk:dual.exc.seq}
Let $d<r$ and consider the functors $\Phi^{(i,\lambda)}$ in the case $i=d$. Then Theorem \ref{thm:SOD} implies that the collection of objects
	$$\{\dSchur^{\lambda}\big(\sR_{\Grass(\sE;d)}\big) \mid \lambda \in B_{r-d,d}\}$$
forms a relative exceptional sequence (\cite[Definition 6.8]{J21}) over $X$ with respect to the opposite lexicographic order. On the other hand, in \cite{J21} we show that 
	$$\{\dSchur^{\alpha}\big(\sQ_{\Grass(\sE;d)} \big)\mid \alpha \in B_{d,r-d}\}$$
forms a relative exceptional sequence over $X$ with respect to the colexicographic order. Using derived Borel--Weil--Bott Theorem \ref{thm:BWB} and the filtered sequences associated with Schur complexes, we can show that these two relative exceptional sequences are dual (and mutation-equivalent) to each other. Details will appear in a separate note.

\end{remark}

\begin{example} 
Let $X \to S$ be any quasi-smooth map between prestacks that is a relative derived algebraic space of constant dimension $r \ge 0$. Then the relative cotangent complex $\LL_{X/S}$ has perfect-amplitude in $[0,1]$ and rank $r$. We let $\TT_{X/S}[1] = \LL_{X/S}^\vee[1]$ denote the shifted tangent complex. Theorem \ref{thm:SOD} implies semiorthogonal decompositions for all $d \ge 0$:
\begin{align*}
\D(\Grass_X(\LL_{X/S};d)) 
=  \left \langle  \text{$\binom{r}{i}$ copies of } \D(\Grass_X(\TT_{X/S}[1];d-i))   \right \rangle_{0 \le i \le \min\{r, d\}}.
\end{align*}
In the special case where $d=r$, the derived relative scheme $\Grass_X(\LL_{X/S};r) \to X$ is closely related to the construction of Nash blowups. 
\end{example}
 
\subsection{The Universal Local Situation}
\label{sec:local}
In this subsection, we let $\D = \Dperf$. 

\subsubsection{The Setup for the Universal Local Situation}
\label{subsec:setup:local}
Now we introduce the basic setup for the universal local situation:

\begin{notation}
\label{notation:setup:local}
\begin{enumerate}
	\item 
	Let $\kk$ be a commutative ring, let $n, m, d \ge 0$ be integers such that $n-m=: r \ge 0$, and $W = \kk^m$ and $V = \kk^n$. 
	\item 
	For a pair of non-negative integers $(d_+, d_-)$, we let 
	$$\GG_{d_+}^+ := \Grass_{\Spec \kk}(V; d_+) \quad \text{and} \quad \GG_{d_-}^- := \Grass_{\Spec \kk}(W^\vee ; d_-)$$
denote the rank $d_\pm$ Grassmannian $\kk$-schemes of $V$ and $W^\vee$, respectively, and let 
	$$R_{\GG_{d_+}^{+}} \hookrightarrow V  \otimes \sO_{\GG_{d_+}^{+}} \twoheadrightarrow Q_{\GG_{d_+}^{+}} \quad \text{and} \quad R_{\GG_{d_-}^{-}} \hookrightarrow W^\vee  \otimes \sO_{\GG_{d_-}^{-}} \twoheadrightarrow Q_{\GG_{d_-}^{-}},$$
denote the tautological short exact sequences, where $Q_{\GG_{d_\pm}^{\pm}}$ are tautological quotient bundles of ranks $d_\pm$, respectively. 
	\item Let $X = \underline{\Hom}_{\kk}(W,V) = \Spec (\Sym_{\kk}^*(W \otimes_{\kk} V^\vee) )$ denote affine $\kk$-space parametrizing $\kk$-homomorphisms from $W$ to $V$, and let $\tau \colon W \otimes \sO_X \to V \otimes \sO_X$ denote the tautological morphism. Let $\sE = [W \otimes \sO_X \xrightarrow{\tau} V \otimes \sO_X]$ (with $V \otimes \sO_X$ placed in degree $0$); then $\sE^\vee[1] \simeq  [V^\vee \otimes \sO_X \xrightarrow{\tau^\vee} W^\vee \otimes \sO_X]$ (with $W^\vee \otimes \sO_X$ placed in degree $0$).
\end{enumerate}
\end{notation}
 
\begin{lemma}
\label{lem:setup:local}
In the situation of Notation \ref{notation:setup:local}, we have canonical identifications:
\begin{align*}
& q_{\GG_{d_+}^+} \colon \Grass(\sE;d_+) \simeq \Spec \Big(\Sym^*_{ \sO_{\GG_{d_+}^{+}}} \big(W \otimes R_{\GG_{d_+}^{+}}^\vee \big) \Big) \to \GG_{d_+}^+.\\
& q_{\GG_{d_-}^-} \colon
\Grass(\sE^\vee[1];d_-)  \simeq \Spec \Big(\Sym^*_{ \sO_{\GG_{d_-}^{-}}} \big(R_{\GG_{d_-}^{-}}^\vee \otimes V^\vee \big) \Big)\to \GG_{d_-}^-.\\
& q_{\Incidence_{(d_+,d_-)}} \colon \Incidence_{(d_+,d_-)}(\sE)  \simeq \Spec \Big(\Sym_{\sO_{\GG_{d_+}^{+} \times \GG_{d_-}^-}}^*\big(R_{\GG_{d_-}^{-}}^\vee \boxtimes R_{\GG_{d_+}^{+}}^\vee \big) \Big) \to \GG_{d_+}^{+} \times \GG_{d_-}^-.
\end{align*}
\end{lemma}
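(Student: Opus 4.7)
All three identifications will be verified by matching functors of points, using the presentation of $\sE$ as a cofiber of a map of trivial bundles $W \otimes \sO_X \xrightarrow{\tau} V \otimes \sO_X$ (and dually for $\sE^\vee[1]$).

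\emph{Parts (1) and (2).} Fix a test object $T = \Spec A$ with $A \in \CAlgDelta$ and a $T$-point $(\eta, u) \in \Grass(\sE;d_+)(T)$, with $\eta \colon T \to X$ and a $\pi_0$-surjection $u \colon \eta^*\sE \to \sP$ onto a rank-$d_+$ vector bundle. Since $\eta^*\sE \simeq \cofib(W \otimes \sO_T \xrightarrow{\eta^*\tau} V \otimes \sO_T)$, the universal property of cofibers identifies $u$ with a pair $(v \colon V \otimes \sO_T \to \sP,\, h)$, where $h$ is a null-homotopy of the composition $v \circ \eta^*\tau$. The mapping space $\Map_{\Dqc(T)}(W \otimes \sO_T, \sP)$ is discrete (being the global sections of a vector bundle on $T$), so the datum $h$ reduces to the \emph{condition} $v \circ \eta^*\tau = 0$. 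Thus $(v,h)$ amounts to a $T$-point of $\GG_{d_+}^+$ (from the $\pi_0$-surjection $v$) with tautological kernel $\sR_T$, together with a factorization $\widetilde{\tau} \colon W \otimes \sO_T \to \sR_T$ of $\eta^*\tau$; such a factorization is exactly a $T$-section of $W^\vee \otimes R_{\GG_{d_+}^+}$. This identifies $\Grass(\sE;d_+)$ with $\Spec_{\GG_{d_+}^+}(\Sym^*(W \otimes R_{\GG_{d_+}^+}^\vee))$. Part (2) follows by the identical argument applied to $\sE^\vee[1] \simeq \cofib(V^\vee \otimes \sO_X \xrightarrow{\tau^\vee} W^\vee \otimes \sO_X)$.

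\emph{Part (3).} The plan is to apply Lemma \ref{lem:dGrass:incidence}.\eqref{lem:dGrass:incidence-1} to rewrite $\Incidence_{(d_+,d_-)}(\sE) \simeq \Grass_Y(\sF; d_-)$, where $Y := \Grass(\sE; d_+)$ and $\sF := \cofib(\rho_+^\vee[1] \colon \sQ_+^\vee[1] \to \pr_+^*\sE^\vee[1]) \simeq \sR_+^\vee[1]$ (using the fiber sequence $\sR_+ \to \pr_+^*\sE \to \sQ_+$). In the universal local setup, using part (1), $\sR_+$ is identified with the two-term complex $[W \otimes \sO_Y \xrightarrow{\widetilde{\tau}} R_{\GG_{d_+}^+}]$, where $\widetilde{\tau}$ is the tautological section on $Y$; consequently $\sF \simeq \cofib(R_{\GG_{d_+}^+}^\vee \xrightarrow{\widetilde{\tau}^\vee} W^\vee \otimes \sO_Y)$ is once again a two-term complex whose degree-zero part $W^\vee \otimes \sO_Y$ is trivially pulled back from $\Spec \kk$. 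Re-running the cofiber analysis from parts (1)-(2), now for $\sF$ over $Y$, a rank-$d_-$ quotient $\xi^*\sF \twoheadrightarrow \sP_-$ on a $T$-point $\xi \colon T \to Y$ corresponds to a $T$-point $x_- \colon T \to \GG_{d_-}^-$ together with a factorization $\widetilde{\psi} \colon \xi^*R_{\GG_{d_+}^+}^\vee \to x_-^*R_{\GG_{d_-}^-}$ of $\xi^*\widetilde{\tau}^\vee$ along the subbundle inclusion $R_{\GG_{d_-}^-} \hookrightarrow W^\vee$. Because this inclusion is injective, $\widetilde{\psi}$ uniquely determines $\xi^*\widetilde{\tau}^\vee$, and hence the fibre-direction component of $\xi$ over $\GG_{d_+}^+$, collapsing the a priori four pieces of data $(x_+, \widetilde{\tau}, x_-, \widetilde{\psi})$ (where $x_+ := q_{\GG_{d_+}^+} \circ \xi$) down to the triple $(x_+, x_-, \widetilde{\psi})$. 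Viewing $\widetilde{\psi}$ dually as a section of $R_{\GG_{d_-}^-} \boxtimes R_{\GG_{d_+}^+}$ over $\GG_{d_+}^+ \times \GG_{d_-}^-$, this yields the desired identification with $\Spec_{\GG_{d_+}^+ \times \GG_{d_-}^-}(\Sym^*(R_{\GG_{d_-}^-}^\vee \boxtimes R_{\GG_{d_+}^+}^\vee))$.

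\emph{Anticipated obstacle.} The main subtlety is in part (3): the null-homotopy compatibility in the cofiber analysis for $\sF$ rigidly couples the tautological section $\widetilde{\tau}$ on $Y$ to the new factorization $\widetilde{\psi}$, so one must carefully verify that the apparent overcount of parameters collapses to a single section of $R_{\GG_{d_-}^-} \boxtimes R_{\GG_{d_+}^+}$, matching the claimed structure sheaf.
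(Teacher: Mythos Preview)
Your functor-of-points approach is more self-contained than the paper's proof, which simply observes (via the dimension criterion of Remark~\ref{rmk:classical.criteria}) that all three derived schemes are classical and then defers to the classical computation in \cite[Lemma 4.1]{BLV3} or \cite[Lemma 5.1]{J21}. Your route has the advantage of making the identifications explicit and of working uniformly in the derived setting; the same cofiber analysis then feeds directly into Part~(3).

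There is, however, one genuine slip. The claim that $\Map_{\Dqc(T)}(W \otimes \sO_T, \sP)$ is discrete is false when $T=\Spec A$ with $A$ a non-discrete simplicial commutative ring: global sections of a vector bundle on such $T$ form a connective $A$-module, which can carry higher homotopy. Fortunately the conclusion survives, for a slightly different reason. The pair $(\eta^*\tau, h)$, with $h$ a null-homotopy of $v \circ \eta^*\tau$, is \emph{by definition} a point of $\Map(W \otimes \sO_T, \fib(v))$; since $\eta$ is itself determined by $\eta^*\tau$ (as $X$ is the Hom-space), the total datum $(\eta, v, h)$ is equivalent to the pair $(v, \widetilde\tau)$ with $\widetilde\tau \in \Map(W \otimes \sO_T, \sR_T)$, matching the right-hand side on the nose for arbitrary $A$. (Alternatively, first invoke classicality as the paper does and restrict to discrete test rings, where your discreteness argument is valid.) The same correction applies in Part~(3): the collapse from $(x_+,\widetilde\tau,x_-,\widetilde\psi)$ to $(x_+,x_-,\widetilde\psi)$ is correct, but the reason is that the pair (base point $\widetilde\tau^\vee \in \Map(R_+^\vee, W^\vee)$, point in the fiber over it) is tautologically a point of the total space $\Map(R_+^\vee, R_-)$, not merely that the inclusion $R_- \hookrightarrow W^\vee$ is injective.
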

\begin{proof}
In this case, these derived schemes are classical (Remark \ref{rmk:classical.criteria}). Therefore, the desired result follows easily from definitions (see \cite[Lemma 4.1]{BLV3} or \cite[Lemma 5.1]{J21}).
\end{proof} 

\begin{notation}
\label{notation:abuse:local}
In the situation of Lemma \ref{lem:setup:local}, 
to reduce the burden of notations, for objects $\sF_\pm \in \D(\GG_{d_\pm}^\pm)$, we will simply use the {\em same} notations $\sF_+ = q_{\GG_{d_+}^+}^*(\sF_+) \in \D( \Grass(\sE;d_+))$ and $\sF_- =q_{\GG_{d_-}^-}^*(\sF_-) \in \D( \Grass(\sE^\vee[1];d_-))$ to denote their respective pullbacks. 
\end{notation}

\begin{notation}
\label{notation:gen:local}
If $\shC$ is an idempotent-complete stable $\infty$-category (such as $\D(\Grass(\sE;d_+))$ and $\Grass(\sE^\vee[1];d_-)$), and $\{C_i\}_{i \in I}$ is a collection of objects of $\shC$, we let $\langle \{C_i\}_{i \in I}  \rangle\subseteq \shC$ denote the stable subcategory thickly generated by $\{C_i\}_{i \in I}$ (i.e., $\langle \{C_i\}_{i \in I} \rangle$ is the smallest idempotent-complete stable $\infty$-subcategory of $\shC$ which contains all $C_i$). 
\end{notation}

\begin{lemma} 
\label{lem:generators}
In the situation of Lemma \ref{lem:setup:local} and using Notations \ref{notation:abuse:local}, \ref{notation:gen:local}, we have 
	\begin{align*}
\D(\Grass(\sE;d_+)) &= \Big \langle \big \{ \dSchur^{\lambda} \big(R_{\GG_{d_+}^{+}}^\vee\big) \big \}_{\lambda \in B_{n-d_+,d_+}} \Big \rangle\\
	\D(\Grass(\sE^\vee[1];d_-) )&= \Big \langle  \big\{ \dSchur^{\mu} \big(R_{\GG_{d_-}^{-}}\big) \big\}_{\mu \in B_{m-d_-,d_-}} \Big \rangle.
\end{align*}
\end{lemma}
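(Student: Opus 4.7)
The plan is to reduce everything to classical facts about Grassmannian $\kk$-schemes via Lemma~\ref{lem:setup:local}, and then to combine Kapranov's exceptional collection with the standard fact that $\Dperf$ of a vector bundle is thickly generated by pullbacks from the base.

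First, Lemma~\ref{lem:setup:local} shows that $q_{\GG_{d_+}^+}\colon \Grass(\sE;d_+) \to \GG_{d_+}^+$ and $q_{\GG_{d_-}^-}\colon \Grass(\sE^\vee[1];d_-) \to \GG_{d_-}^-$ realize both derived Grassmannians as classical schemes, namely the total spaces of honest vector bundles over the classical Grassmannian $\kk$-schemes. In particular, both targets are classical, the tautological subbundles $R_{\GG_{d_\pm}^\pm}$ are ordinary vector bundles, and the derived Schur complexes $\dSchur^\lambda(R_{\GG_{d_\pm}^\pm}^\vee)$ and $\dSchur^\mu(R_{\GG_{d_-}^-})$ coincide with the classical Schur modules.

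Next I would invoke the standard generation statement: for any vector bundle projection $p\colon V(\sF) \to Y$, the pullback $p^*\colon \Dperf(Y) \to \Dperf(V(\sF))$ is fully faithful (retract $s_0^*$ along the zero section) and its essential image thickly generates $\Dperf(V(\sF))$. Fully-faithfulness is immediate from $s_0^* p^* \simeq \id$; generation follows from the Koszul resolution
\begin{equation*}
s_{0,*}\sO_Y \simeq \bigl[\wedge^{\rank \sF}(p^*\sF^\vee) \to \cdots \to p^*\sF^\vee \to \sO_{V(\sF)}\bigr],
\end{equation*}
which places $s_{0,*}\Dperf(Y)$ inside $\langle p^*\Dperf(Y)\rangle$, combined with a standard devissage (equivalently, $\AA^1$-homotopy invariance of $\Dperf$ for regular schemes). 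Applied to $q_{\GG_{d_\pm}^\pm}$, this shows that $\Dperf(\Grass(\sE;d_+))$ and $\Dperf(\Grass(\sE^\vee[1];d_-))$ are thickly generated by pullbacks of any thick generating set for $\Dperf(\GG_{d_+}^+)$ and $\Dperf(\GG_{d_-}^-)$, respectively.

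Finally I would invoke Kapranov's theorem \cite{K88}: $\Dperf(\GG_{d_+}^+) = \Dperf(\Gr_\kk(V;d_+))$ is thickly generated (in fact, by a strong exceptional collection) by $\{\dSchur^\lambda(R_{\GG_{d_+}^+}^\vee)\}_{\lambda \in B_{n-d_+,d_+}}$, and similarly $\Dperf(\GG_{d_-}^-)$ is thickly generated by $\{\dSchur^\mu(R_{\GG_{d_-}^-}^\vee)\}_{\mu \in B_{m-d_-,d_-}}$. Pulling back along $q_{\GG_{d_+}^+}$ (and using Notation~\ref{notation:abuse:local}) yields the first assertion directly. For the second, observe that since $R_{\GG_{d_-}^-}$ is a vector bundle one has $\dSchur^\mu(R_{\GG_{d_-}^-}^\vee) \simeq \bigl(\dSchur^\mu(R_{\GG_{d_-}^-})\bigr)^\vee$, and dualization preserves thick subcategories of $\Dperf$; hence $\{\dSchur^\mu(R_{\GG_{d_-}^-})\}_\mu$ and $\{\dSchur^\mu(R_{\GG_{d_-}^-}^\vee)\}_\mu$ thickly generate the same subcategory, giving the second assertion. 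The only nontrivial input is the vector-bundle generation step; it is entirely classical, but warrants care when stated in the $\infty$-categorical framework used here.
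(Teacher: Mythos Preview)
Your argument is correct and follows essentially the same route as the paper: the paper's proof simply cites Kapranov's exceptional collections for $\D(\GG_{d_\pm}^{\pm})$ together with the fact that the projections $q_{\GG_{d_\pm}^\pm}$ are relative affine spaces. You have unpacked the affine-space generation step via the Koszul resolution and made explicit the dualization needed to pass from $\dSchur^{\mu}(R_{\GG_{d_-}^-}^\vee)$ to $\dSchur^{\mu}(R_{\GG_{d_-}^-})$, both of which the paper leaves implicit.
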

\begin{proof}
This follows from Kapranov's exceptional collections for $\D(\GG_{d_\pm}^{\pm})$ (\cite{K88}; see also \cite{BLV, Ef} for the characteristic-free version) and the fact that the natural projections $q_{\GG_{d_\pm}^\pm}$ are relative affine spaces.
\end{proof}

\subsubsection{Incidence Correspondences in the Universal Local Situation}
This subsection considers the incidence diagram \eqref{diagram:Inc:Corr} of Definition \ref{def:incidence} in the universal local situation \S \ref{subsec:setup:local}. We assume that $d \ge r=m-n$ and consider the incidence diagram \eqref{diagram:Inc:Corr} in the case where $(d_+,d_-) = (d, d-r)$. Let $\Phi = \Phi^{(0)}_{(d,d-r)}$ be the functor of Notation \ref{notation:Phi_lambda} in the case where $\lambda = (0)$, and let $\Phi^L$ be its left adjoint functor, that is:\begin{align*}
	\Phi & = r_{+*} r_{-}^* \colon \D(\Grass(\sE^\vee[1]); d-r) \to \D(\Grass(\sE; d)) \\
	\Phi^L &  = r_{-!} r_{+}^* \colon \D(\Grass(\sE;d)) \to  \D(\Grass(\sE^\vee[1];d-r)).
\end{align*}

\begin{lemma}
\label{lem:flip}
 In the above situation, we have canonical equivalences
	\begin{align*}
	&\Phi\Big(\dSchur^{\lambda} \big(R_{\GG_{d-r}^-}\big)\Big) \simeq \dSchur^{\lambda} \big(R_{\GG_{d}^+}^\vee\big) \quad \text{for all} \quad \lambda \in B_{n-d,d-r}. \\
	&\Phi^{L}\Big(\dSchur^{\lambda} \big(R_{\GG_{d}^+}^\vee \big)\Big) \simeq 
	\dSchur^{\lambda} \big(R_{\GG_{d-r}^-}\big)
	 \quad \text{for all} \quad \lambda \in B_{n-d,d}.
	\end{align*}
\end{lemma}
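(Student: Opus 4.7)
The plan is to use the identifications of Lemma~\ref{lem:setup:local} to reduce both equivalences to explicit cohomology computations on the ordinary Grassmannians, and then to evaluate these via Cauchy's decomposition (valid since we work in characteristic zero) and the derived Borel--Weil--Bott theorem (Theorem~\ref{thm:BWB}). To lighten notation, write $\GG^+ := \GG_{d}^{+}$ and $\GG^- := \GG_{d-r}^{-}$, with tautological sub- and quotient bundles $R_+, Q_+$ on $\GG^+$ and $R_-, Q_-$ on $\GG^-$.

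First, I would reduce to the Grassmannian level. The schemes $Z_+ := \Grass(\sE;d)$, $Z_- := \Grass(\sE^\vee[1];d-r)$, and $Y := \Incidence_{(d,d-r)}(\sE)$ are total spaces of vector bundles over $\GG^+$, $\GG^-$, and $\GG^+ \times \GG^-$ respectively, with projections $q_{\GG^+}$, $q_{\GG^-}$, and $q_Y$; the maps $r_\pm$ of the incidence diagram cover the coordinate projections $p_\pm \colon \GG^+ \times \GG^- \to \GG^\pm$. Since $\dSchur^\lambda(R_-)$ on $Z_-$ and $\dSchur^\lambda(R_+^\vee)$ on $Z_+$ are pulled back from the Grassmannians and the projections $q_{\GG^\pm}$ are affine, it suffices to verify the first equivalence after applying $(q_{\GG^+})_*$, as modules over $q_{\GG^+,*}\sO_{Z_+} = \Sym^*(W \otimes R_+^\vee)$. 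Using the factorization $q_{\GG^+} \circ r_+ = p_+ \circ q_Y$, the projection formula, and the identity $(q_Y)_* \sO_Y = \Sym^*(R_-^\vee \boxtimes R_+^\vee)$, the first assertion reduces to the equivalence
\[
(p_+)_* \bigl(\dSchur^\lambda(R_-) \otimes \Sym^*(R_-^\vee \boxtimes R_+^\vee)\bigr) \simeq \dSchur^\lambda(R_+^\vee) \otimes \Sym^*(W \otimes R_+^\vee)
\]
on $\GG^+$.

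Next, I would expand both sides by Cauchy's formula,
\[
\Sym^*(R_-^\vee \boxtimes R_+^\vee) \simeq \bigoplus_\mu \dSchur^\mu(R_-^\vee) \boxtimes \dSchur^\mu(R_+^\vee),\quad \Sym^*(W \otimes R_+^\vee) \simeq \bigoplus_\nu \dSchur^\nu(W) \otimes \dSchur^\nu(R_+^\vee),
\]
apply the projection formula for $p_+$, and use the Littlewood--Richardson expansion $\dSchur^\lambda(R_+^\vee) \otimes \dSchur^\nu(R_+^\vee) \simeq \bigoplus_\mu c^\mu_{\lambda\nu}\, \dSchur^\mu(R_+^\vee)$. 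Matching the coefficients of $\dSchur^\mu(R_+^\vee)$ reduces the first equivalence to the purely cohomological identity
\[
R\Gamma\bigl(\GG^-,\, \dSchur^\lambda(R_-) \otimes \dSchur^\mu(R_-^\vee)\bigr) \simeq \bigoplus_\nu c^\mu_{\lambda\nu}\, \dSchur^\nu(W)
\]
on $\GG^-$. This in turn follows from Theorem~\ref{thm:BWB} applied to the complete flag $\Flag(W^\vee;\underline{m})$: rewrite $\dSchur^\mu(R_-^\vee) \simeq \dSchur^{\hat\mu}(R_-) \otimes (\det R_-)^{-\mu_1}$ with $\hat\mu_i = \mu_1 - \mu_{n-d+1-i}$, expand $\dSchur^\lambda(R_-) \otimes \dSchur^{\hat\mu}(R_-)$ via Littlewood--Richardson, and evaluate each summand by the dot action.

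The second assertion follows by a symmetric argument: by Lemma~\ref{lem:dGrass:incidence} the projection $r_-$ is proper and quasi-smooth, so Grothendieck duality yields a formula for $\Phi^L = r_{-!} r_+^*$ amenable to an analogous Cauchy--Littlewood--Richardson--BWB computation, now performed on $\GG^+$ with the roles of the two Grassmannian factors interchanged. The principal obstacle is the Borel--Weil--Bott step: although each Littlewood--Richardson summand corresponds to a single line bundle on the complete flag, the twist $(\det R_-)^{-\mu_1}$ introduces dot-action singularities, and one must carefully verify that the surviving nonzero contributions, after the appropriate Weyl permutation, reassemble into exactly $\bigoplus_\nu c^\mu_{\lambda\nu}\, \dSchur^\nu(W)$.
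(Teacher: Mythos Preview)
Your approach is genuinely different from the paper's, and while the overall strategy is plausible in characteristic zero, there are two points that are not fully resolved.

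The paper does not push down to $\GG^+$ via the affine projection. Instead it factors $r_+$ as a closed immersion $\iota \colon \Incidence_{(d,d-r)}(\sE) \hookrightarrow \GG_{d-r}^- \times_\kk \Grass(\sE;d)$ cut out by a regular section of the vector bundle $Q_{\GG_{d-r}^-} \boxtimes R_{\GG_d^+}$, followed by the projection $\pr$ to $\Grass(\sE;d)$. Then $\iota_*\sO$ is resolved by a finite Koszul complex with terms $\bigwedge^\ell(Q_-^\vee \boxtimes R_+^\vee)$, and Cauchy's decomposition for \emph{exterior} powers gives filtrations with graded pieces $\dSchur^{\mu^t}(Q_-)^\vee \otimes \dSchur^\mu(R_+^\vee)$, $|\mu|=\ell$. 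The computation then reduces to the identity $\Hom_{\GG_{d-r}^-}(\dSchur^{\mu^t}(Q_-),\dSchur^\lambda(R_-)[\ast]) \simeq \delta_{\mu,\lambda}\cdot \id$, which is precisely the statement that $\{\dSchur^{\mu^t}(Q_-)\}$ and $\{\dSchur^\lambda(R_-)[|\lambda|]\}$ are Kapranov's dual full exceptional collections. This argument is characteristic-free and bypasses entirely the Borel--Weil--Bott bookkeeping you flag as the principal obstacle.

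Regarding your argument, two issues deserve attention. First, checking that $(q_{\GG^+})_*\Phi(\dSchur^\lambda(R_-))$ and $(q_{\GG^+})_*\bigl(q_{\GG^+}^*\dSchur^\lambda(R_+^\vee)\bigr)$ agree as $\sO_{\GG^+}$-modules, which is all your Cauchy/Littlewood--Richardson matching delivers, is not the same as checking agreement as $\Sym^*(W\otimes R_+^\vee)$-modules; you must either construct a natural map in $\D(Z_+)$ and show your computation proves it is an equivalence, or track the algebra action through the decomposition. Second, the BWB identity you reduce to, $R\Gamma(\GG^-,\dSchur^\lambda(R_-)\otimes\dSchur^\mu(R_-^\vee))\simeq\bigoplus_\nu c^\mu_{\lambda\nu}\dSchur^\nu(W)$, is correct but you have not carried it out; after the det-twist the dot action is genuinely intricate, and leaving it unchecked means the proof is incomplete as written. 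The paper's Koszul-resolution route trades this infinite symmetric-power expansion for a finite exterior one and lands on a known duality, which is both shorter and valid over any base ring.
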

\begin{proof}
This a special case of the key lemma \cite[Lemma 5.6]{J21}; we present here a characteristic-free proof for readers' convenience. We only prove the first equivalence; the other case is similar. The projection $r_+$ factorizes through a composite map (\cite[Proposition 4.19]{J22b})
	$$\Incidence_{(d,d-r)}(\sE) \xrightarrow{\iota} \GG_{d-r}^- \times_{\kk} \Grass(\sE;d) \xrightarrow{\pr} \Grass(\sE;d),$$
where $\iota$ is a closed immersion induced by a regular section of the vector bundle $Q_{\GG_{d-r}^{-}} \boxtimes R_{\GG_{d}^+}$, and $\pr$ is the canonical projection. Therefore, we have a canonical equivalence
	$$\Phi\Big(\dSchur^{\lambda} \big(R_{\GG_{d-r}^-}\big)\Big) \simeq \pr_*\Big(\dSchur^{\lambda} \big(R_{\GG_{d-r}^-}\big) \otimes \iota_{*} (\sO_{\Incidence_{(d,d-r)}(\sE)}) \Big),$$
where $\iota_{*} (\sO_{\Incidence_{(d,d-r)}(\sE)})$ is resolved by a Koszul complex whose $\ell$th terms are given by
	$$\bigwedge\nolimits^{\ell} \Big(Q_{\GG_{d-r}^{-}}^\vee \boxtimes R_{\GG_{d}^+}^\vee\Big) \quad \text{where} \quad 0 \le \ell \le (n-d)(d-r).$$
By Cauchy's decomposition formula  (\cite[Theorems III.1.4]{ABW}, \cite[Propositions 2.3]{Kou}), there is a canonical filtration of $\bigwedge\nolimits^{\ell} \big(Q_{\GG_{d-r}^{-}}^\vee \boxtimes R_{\GG_{d}^+}^\vee\big)$ whose associated graded is given by 
	$\dSchur^{\mu^t}(Q_{\GG_{d-r}^{-}})^\vee \otimes \dSchur^{\mu}(R_{\GG_{d}^+}^\vee),$
where $\mu$ run through all elements of $B_{n-d,d-r}$ such that $|\mu| = \ell$. Consequently, it suffices to prove the following
	$$\Hom_{\D(\GG_{d-r}^{-})}\Big(\dSchur^{\mu^t}\big(Q_{\GG_{d-r}^{-}}\big), \dSchur^{\lambda} \big(R_{\GG_{d-r}^-}\big) [\lambda]\Big) \simeq \delta_{\mu, \lambda} \cdot \id,$$
where $\delta_{\mu,\lambda} = 1$ if $\mu=\lambda$ and $\delta_{\mu,\lambda} = 0$ if $\mu \neq \lambda$. This follows from that $\big\{(\dSchur^{\mu^t}\big(Q_{\GG_{d-r}^{-}}\big)\big\}_{\mu \in B_{n-d,d-r}}$ and $\big\{\dSchur^{\lambda} \big(R_{\GG_{d-r}^-}\big) [\lambda]\big\}_{\lambda \in B_{n-d,d-r}}$ are dual full exceptional collections of $\D(\GG_{d-r}^{-})$; see \cite[Theorem 7.5]{BLV} and \cite[Theorem 1.6]{Ef}.
\end{proof}

\begin{corollary}
\label{cor:ff:incidence}
In the situation of Lemma \ref{lem:flip}, the functor $\Phi$ is fully faithful, with essentially image $\Im \Phi = \Big\langle \big\{ \dSchur^{\mu} \big(R_{\GG_{d}^+}^\vee\big) \big\}_{\mu \in B_{n-d,d-r}} \Big \rangle \subseteq \D(\Grass(\sE;d))$. 
\end{corollary}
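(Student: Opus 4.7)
My strategy is to combine the two equivalences of Lemma \ref{lem:flip} with the description of generators in Lemma \ref{lem:generators}. Write $G_\mu := \dSchur^\mu(R_{\GG_{d-r}^-})$ and $H_\mu := \dSchur^\mu(R_{\GG_d^+}^\vee)$ for $\mu \in B_{n-d,\, d-r}$; since $m-(d-r)=n-d$, Lemma \ref{lem:generators} gives that $\{G_\mu\}$ thickly generates the source. The essential image assertion is then immediate: $\Phi$ is exact and preserves perfect complexes (Lemma \ref{lem:Phi_lambda:adjoints}), and by Lemma \ref{lem:flip} it sends each $G_\mu$ to $H_\mu$; hence $\Im(\Phi) = \big\langle \{H_\mu\}_{\mu \in B_{n-d,\, d-r}}\big\rangle$.

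For fully faithfulness, I would show the counit $\epsilon \colon \Phi^L \Phi \to \id$ is an equivalence. Since both $\Phi$ and $\Phi^L$ are cocontinuous (Lemma \ref{lem:Phi_lambda:adjoints}), so is $\Phi^L \Phi$, and it suffices to check $\epsilon$ on the generators $G_\mu$. Combining both parts of Lemma \ref{lem:flip}, and using that $B_{n-d,\, d-r} \subseteq B_{n-d,\, d}$, we get $\Phi^L \Phi G_\mu \simeq \Phi^L H_\mu \simeq G_\mu$ at the level of objects. To upgrade this to the counit \emph{map} being an equivalence, I would verify that $\Phi$ is fully faithful on the generating subcategory, i.e. that
\[ \Phi \colon \Hom(G_\mu, G_\nu) \xrightarrow{\sim} \Hom(H_\mu, H_\nu) \qquad \text{for all } \mu, \nu \in B_{n-d,\, d-r}. \]
Testing $\epsilon_{G_\mu}$ against all generators $G_\nu$, and using the adjunction identification $\Hom(\Phi^L \Phi G_\mu, G_\nu) \simeq \Hom(\Phi G_\mu, \Phi G_\nu)$ together with the Yoneda lemma, this then yields that $\epsilon_{G_\mu}$ is an equivalence.

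The main obstacle is the above mapping-space computation, which I expect to mirror the strategy used in the proof of Lemma \ref{lem:flip}. Concretely, I would use Lemma \ref{lem:setup:local} to rewrite both $\Hom$ spaces as mapping spaces on the Grassmannian bases $\GG_{d-r}^-$ and $\GG_d^+$ twisted by the pushforwards $\Sym^*(R_{\GG_{d-r}^-}^\vee \otimes V^\vee)$ and $\Sym^*(W \otimes R_{\GG_d^+}^\vee)$ respectively; expand these symmetric algebras via Cauchy's decomposition into Schur-functorial pieces; and evaluate using Littlewood--Richardson together with Kapranov's mutually dual full exceptional collections on the two Grassmannians. The bookkeeping is nontrivial but is essentially the computation already carried out in \cite[Lemma 5.6]{J21}.
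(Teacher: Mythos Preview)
Your approach is essentially the paper's: reduce to checking the counit $\Phi^L\Phi \to \id$ on the generators $G_\mu$ of Lemma~\ref{lem:generators}, invoke Lemma~\ref{lem:flip}, and use that the locus where the counit is an equivalence is a thick stable subcategory. The paper is terser on the one point you single out: it asserts directly that the canonical equivalences of Lemma~\ref{lem:flip} give the counit on generators, without the separate $\Hom(G_\mu,G_\nu)\simeq\Hom(H_\mu,H_\nu)$ verification you propose; your extra step is a legitimate way to make that assertion rigorous, and as you anticipate it is exactly the Koszul/Cauchy/dual-exceptional-collection computation already carried out in the proof of Lemma~\ref{lem:flip} (and in \cite[Lemma~5.6]{J21}).
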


\begin{proof}
Lemma \ref{lem:flip} implies that the counit map $\Phi^L \circ \Phi \to \id$ is an equivalence when evaluated at the generators $\dSchur^{\lambda} \big(R_{\GG_{d-r}^-}\big)$ of $\D(\Grass(\sE^\vee[1]; d-r))$ described in Lemma \ref{lem:generators}, where $\lambda \in B_{n-r,r-d}$. Since the collection of objects $\sF$, for which the counit map $\Phi^L \circ \Phi(\sF) \to \sF$ is an equivalence, forms an idempotent-complete stable $\infty$-subcategory of $\D(\Grass(\sE^\vee[1]; d-r))$, it follows that the counit map $\Phi^L \circ \Phi \to \id$ is an equivalence. Hence the corollary follows.
\end{proof}

\subsubsection{Flag Correspondences in the Universal Local Situation}
Now we consider flag correspondences \eqref{diag:corr:flagd,d+1} in the universal local situation of \S \ref{subsec:setup:local}.
Let $d$ be an integer such that $0 \le d \le n-1$, and let $\Psi=\Psi_0$ be the functor defined in Notation \ref{notation:Psi_k} and let $\Psi^L$ be its left adjoint; that is:
	\begin{align*}
	\Psi &= p_{+\,*} \, p_{-}^* \colon \D(\Grass(\sE; d+1)) \to \D(\Grass(\sE;d)), \\
	\Psi^L & = p_{-\,!}  \,p_{+}^* \colon \D(\Grass(\sE;d)) \to  \D(\Grass(\sE;d+1)),
\end{align*}
where $p_\pm$ are defined as in \eqref{diag:corr:flagd,d+1}, and $p_{-!}$ denotes the left adjoint of $p_{-}^*$. 

The following is analogous to \cite[Lemma 5.6]{J21} in the case where $\ell_+ - \ell_- = 1$; the combinatorics of the Lascoux-type complexes $F_*$ in this case are also similar to that of the staircase complexes studied in \cite{Fon, DS}.

\begin{lemma}
\label{lem:flag:Lascoux}
 In the above situation, we have:
\begin{enumerate}
	\item 
	\label{lem:flag:Lascoux-1}
	For any $\lambda \in B_{n-d,d}$, there is a canonical equivalence 
		$$\Psi^L\Big(\dSchur^{\lambda} \big(R_{\GG_{d}^+}^\vee\big)\Big)
		\simeq 
		\begin{cases}
		\dSchur^{\lambda} \big(R_{\GG_{d+1}^+}^\vee\big)  & \text{if} \quad \lambda \in B_{n-d-1,d} \subseteq B_{n-d,d}; \\
		0 & \text{if} \quad \lambda \in B_{n-d,d} \,\backslash \, B_{n-d-1,d}.
		\end{cases}
		$$
	\item
	\label{lem:flag:Lascoux-2}
	 If $\kk$ is a $\QQ$-algebra, then for any $\lambda \in B_{n-d-1, d}$ with $\lambda_1 = k$, where $\max\{0,d-r+1\} \le k \le d$, the image $\Psi\big(\dSchur^{\lambda} \big(R_{\GG_{d+1}^+}^\vee\big)\big)$ admits a resolution by vector bundles
		$$ \Psi\Big(\dSchur^{\lambda} \big(R_{\GG_{d+1}^+}^\vee\big)\Big) \simeq F_* = \big[0 \to F_k \to \cdots \to F_1 \to F_0\big],$$
	where $F_0 = \dSchur^{\lambda} \big(R_{\GG_{d}^+}^\vee\big)$ and
		$F_i = \dSchur^{\lambda^{(i)}}(R_{\GG_{d}^+}^\vee) \otimes \bigwedge\nolimits^{|\lambda^{(i)}| - |\lambda|}(W)$ 
		for $1 \le i \le k.$ Here, for any given $1 \le i \le k$, let $1 \le j \le n-d-1$ be such that $\lambda_j  \ge i \ge \lambda_{j+1}+1$, then 
	\begin{equation}\label{eqn:lambda^(i)}
		\lambda^{(i)} = (\lambda_1, \lambda_2, \ldots, \lambda_j, i, \lambda_{j+1}+1, \ldots, \lambda_{n-d-1}+1) \in B_{n-d,k} \,\backslash \,B_{n-d-1,k}.
	\end{equation}
\end{enumerate}
\end{lemma}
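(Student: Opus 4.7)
The strategy is to exploit the two geometric descriptions of the flag correspondence in Remark \ref{rmk:flag:p_pm}: $p_-$ is the smooth $\PP^d$-bundle $\PP(\sQ_{d+1}^\vee) \to \Grass(\sE;d+1)$, while $p_+$ is the derived projectivization $\PP(\sR_{\Grass(\sE;d)}) \to \Grass(\sE;d)$ of the rank-$(r-d)$ perfect complex $\sR_{\Grass(\sE;d)} \simeq [W \otimes \sO \xrightarrow{\tau} R_{\GG_d^+}]$. On $\Flag(\sE;d,d+1)$, the tautological subs fit in a short exact sequence
\[
0 \to \sL_{d+1}^\vee \to R_d^\vee \to R_{d+1}^\vee \to 0
\]
with $R_d^\vee \simeq p_+^* R_{\GG_d^+}^\vee$, $R_{d+1}^\vee \simeq p_-^* R_{\GG_{d+1}^+}^\vee$, and $\sL_{d+1}^\vee \simeq \sO_{p_-}(1) \simeq \sO_{p_+}(-1)$. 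Both parts will be reduced to calculations on this sequence.

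For Part \eqref{lem:flag:Lascoux-1}, use that $p_-$ is smooth proper of relative dimension $d$ with $\omega_{p_-} \simeq \sL_{d+1}^{d+1} \otimes \det(\sQ_{d+1})^{-1}$, so $\Psi^L(-) = p_{-*}\bigl(p_+^*(-) \otimes \omega_{p_-}[d]\bigr)$. The pullback $p_+^* \dSchur^\lambda(R_{\GG_d^+}^\vee) = \dSchur^\lambda(R_d^\vee)$ admits a Pieri-type filtration---obtained characteristic-freely from the Akin--Buchsbaum--Weyman Schur complex of the rank-one inclusion $\sL_{d+1}^\vee \hookrightarrow R_d^\vee$---whose associated graded is $\bigoplus_{\nu} \dSchur^\nu(R_{d+1}^\vee) \otimes \sL_{d+1}^{\vee |\lambda/\nu|}$, indexed by $\nu \subseteq \lambda$ such that $\lambda/\nu$ is a horizontal strip. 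Twisting by $\omega_{p_-}[d]$, applying the projection formula (since $R_{d+1}^\vee$ and $\det \sQ_{d+1}$ descend from $\Grass(\sE;d+1)$), and invoking the standard vanishing $p_{-*}\sL_{d+1}^j = 0$ for $1 \le j \le d$ on the $\PP^d$-bundle $p_-$, the bound $|\lambda/\nu| \le \lambda_1 \le d$ forces $\nu = \lambda$ to be the only surviving summand. Its contribution $\dSchur^\lambda(R_{d+1}^\vee) \otimes \det(\sQ_{d+1})[-d]$ cancels the dualizing twist to yield $\dSchur^\lambda(R_{d+1}^\vee)$, which is nonzero iff $\ell(\lambda) \le n-d-1$, i.e., $\lambda \in B_{n-d-1,d}$, producing the claimed dichotomy.

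For Part \eqref{lem:flag:Lascoux-2}, compute $\Psi(\dSchur^\lambda(R_{\GG_{d+1}^+}^\vee)) = p_{+*} \dSchur^\lambda(R_{d+1}^\vee)$. In characteristic zero the Pieri filtration splits, allowing $\dSchur^\lambda(R_{d+1}^\vee)$ to be represented by a Schur/Lascoux complex with terms of the form $\dSchur^\mu(R_d^\vee) \otimes \sL_{d+1}^{\vee |\lambda/\mu|}$. After projection formula along $p_+$ (with $R_d^\vee$ pulled back and $\sL_{d+1} = \sO_{p_+}(1)$), the calculation reduces to pushforwards $p_{+*} \sO_{p_+}(-k)$. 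Since in the universal local setting $\PP(\sR_{\Grass(\sE;d)})$ embeds in the classical projective bundle $\PP(R_{\GG_d^+})$ as the derived zero locus of the canonical section $W \otimes \sO \to \sO_{\PP(R_{\GG_d^+})}(1)$, the Koszul resolution of $\sO_{\PP(\sR_{\Grass(\sE;d)})}$ introduces exterior powers $\bigwedge^i(W) \otimes \sO(-i)$, and the classical Borel--Weil--Bott formula on $\PP(R_{\GG_d^+}) \to \Grass(\sE;d)$ converts the resulting $\sO(-i-k)$'s into Schur functors of $R_{\GG_d^+}^\vee$. The main obstacle is the combinatorial bookkeeping: one must show that the aggregate bicomplex collapses---via Lascoux's theorem for partial-flag pushforwards, whose char-zero input is Theorem \ref{thm:BWB}.\eqref{thm:BWB-2}---into precisely the staircase resolution $[F_k \to \cdots \to F_0]$ with $F_i = \dSchur^{\lambda^{(i)}}(R_{\GG_d^+}^\vee) \otimes \bigwedge^{|\lambda^{(i)}|-|\lambda|}(W)$ and the partitions $\lambda^{(i)}$ from \eqref{eqn:lambda^(i)}, matching the row-inserting combinatorics precisely with the horizontal-strip plus Koszul-degree data.
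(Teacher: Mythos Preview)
Your Part \eqref{lem:flag:Lascoux-1} is essentially the paper's argument: the paper applies the Pieri filtration from \cite{Kou} and then Serre vanishing for $p_{-!}$, while you unwind $p_{-!}$ as $p_{-*}(-\otimes\omega_{p_-})[d]$; these are the same computation.

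For Part \eqref{lem:flag:Lascoux-2}, however, you take a detour that the paper avoids, and the detour is where your argument becomes hand-wavy. You propose to first resolve $\dSchur^{\lambda}(R_{d+1}^\vee)$ by a Schur complex with terms $\dSchur^{\mu}(R_d^\vee)\otimes(\sL_{d+1}^\vee)^{|\lambda/\mu|}$, and only then invoke the Koszul resolution along $\iota$ and Borel--Weil--Bott on the classical bundle $\pr\colon\PP(R_{\GG_d^+})\to\Grass(\sE;d)$. But pushing forward $\sO_{\pr}(-j)$ along $\pr$ yields only symmetric powers (or zero), not general Schur functors; so your bicomplex has graded pieces of the form $\dSchur^{\mu}(R_{\GG_d^+}^\vee)\otimes\Sym^{?}(R_{\GG_d^+})\otimes\bigwedge^{?}W$, which would then need a further Pieri decomposition and a delicate cancellation argument to collapse to the staircase $F_*$. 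You acknowledge this as ``combinatorial bookkeeping'' but do not carry it out, and your appeal to ``Lascoux's theorem for partial-flag pushforwards'' is misplaced---that is not a statement about collapsing such bicomplexes.

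The paper's route is cleaner precisely because it skips your first step. It observes that on $\PP(R_{\GG_d^+})\simeq\Grass(R_{\GG_d^+}^\vee;\,n-d-1)$ the bundle $R_{d+1}^\vee$ is itself the tautological rank-$(n-d-1)$ quotient of $\pr^*(R_{\GG_d^+}^\vee)$, and $\sL_{d+1}^\vee$ the tautological sub. Hence $\dSchur^{\lambda}(R_{d+1}^\vee)\otimes\sL_{d+1}^{-\ell}$ is already of the form $\sV(\alpha,\beta)$ for the weight $(\lambda_1,\ldots,\lambda_{n-d-1},\ell)$, and a single application of Borel--Weil--Bott (Theorem~\ref{thm:BWB}\eqref{thm:BWB-2}) to this weight---after passing to the complete flag---computes $\pr_*(\dSchur^{\lambda}(R_{d+1}^\vee)\otimes\sL_{d+1}^{-\ell})$ in one stroke. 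The dot-action combinatorics on $(\lambda,\ell)+\rho$ then produces exactly the partitions $\lambda^{(i)}$ of \eqref{eqn:lambda^(i)}, with the $\bigwedge^{\ell}W$ factor coming solely from the Koszul resolution of $\iota_*\sO$. There is no bicomplex to collapse: each Koszul degree $\ell$ contributes at most one nonzero term, and the spectral sequence degenerates for degree reasons.
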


\begin{proof}
First, we prove assertion \eqref{lem:flag:Lascoux-1}.
Using the Notation(s) \ref{notation:Psi_k} (and \ref{notation:abuse:local}), there is a short exact sequence of vector bundles on $\Flag(\sE;d,d+1)$:
	$$\sL_{d+1}^\vee \hookrightarrow p_+^*\big(R_{\GG_{d}^+}^\vee\big) \twoheadrightarrow p_-^*\big(R_{\GG_{d+1}^+}^\vee \big),$$
where $\sL_{d+1} = \Ker(\sQ_{d+1} \to \sQ_{d})$, and $p_\pm$ are defined as in  \eqref{diag:corr:flagd,d+1}. 
Let $\lambda \in B_{n-d,d}$, then from the from direct-sum decomposition formula \cite[Theorems 2.5 (b)]{Kou} (see also \cite[Theorem 2.12]{J22b}), 
there is filtration on $\dSchur^{\lambda} \Big(p_+^*\big(R_{\GG_{d}^+}^\vee\big) \Big) \simeq p_+^* \Big(\dSchur^{\lambda}  \big(R_{\GG_{d}^+}^\vee\big) \Big)$ whose associated graded is 
	$$\bigoplus_{\nu=(\nu_{1}, \ldots, \nu_{n-d-1}) \subseteq \lambda=(\lambda_{1}, \ldots, \lambda_{n-d}) } \dSchur^{\nu}\Big(p_-^*\big(R_{\GG_{d+1}^+}^\vee \big)\Big) \otimes \dSchur^{\lambda/\nu} (\sL_{d+1}^\vee). \footnote{notice that we switch the roles of $N$ and $L$ in \cite[Theorem 1.4 (b)]{Kou}, and our version follows from \cite[Theorem 2.5 (b)]{Kou} by applying the duality $\dWeyl^{\lambda/\mu}(\sU^\vee)^\vee \simeq \dSchur^{\lambda/\mu}(\sU)$ for vector bundles $\sU$.}  $$
The skew Schur module $\dSchur^{\lambda/\nu} (\sL_{d+1}^\vee)$ is a quotient of tensor products $\bigotimes_{i=1}^{\lambda_1}\bigwedge^{\lambda^t_i - \nu^t_i}(\sL_{d+1}^\vee)$ (see \cite[Notation 2.3]{J22b} or \cite[\S 2.1]{Wey}), where $\lambda^t = (\lambda_1^t, \lambda_2^t, \ldots) $ and $\nu^t = (\nu_1^t, \nu_2^t, \ldots)$ are transposes of $\lambda$ and $\nu$, respectively. As a result, $\dSchur^{\lambda/\nu}(\sL_{d+1}^\vee)$ is zero unless
		\begin{equation}\label{eqn:lambda.nu.condition}
	0 \le \lambda_{n-d} \le \nu_{n-d-1} \le \lambda_{n-d-1} \le \ldots \le \nu_2 \le \lambda_2 \le \nu_1 \le \lambda_1 \le d,
		\end{equation}
in which case the corresponding summand of the associated graded is equvivalent to
	$$\dSchur^{\nu}\Big(p_-^*\big(R_{\GG_{d+1}^+}^\vee \big)\Big) \otimes (\sL_{d+1}^\vee)^{|\lambda| - |\nu|} \simeq 
	p_-^* \Big(\dSchur^{\nu}\big(R_{\GG_{d+1}^+}^\vee \big) \Big) \otimes (\sL_{d+1}^\vee)^{|\lambda| - |\nu|}.$$
	
If $\lambda \in B_{n-d-1,d}$, the partitions $\nu$ appearing in \eqref{eqn:lambda.nu.condition} can be classified into two cases:
\begin{itemize}
	\item Case $\nu=\lambda$: In this case, the corresponding summand is equivalent to $p_-^* \Big(\dSchur^{\nu}\big(R_{\GG_{d+1}^+}^\vee \big) \Big)$.
	\item Case $\nu \neq \lambda$: In this case, we have $1 \le |\lambda| - |\nu| \le \lambda_1 \le d$. For such cases, Serre's vanishing theorem (Remark \ref{rmk:flag:p_pm}, 
\cite[Theorem 5.2]{J22a}) implies that $p_{-\,!} ((\sL_{d+1}^\vee)^{|\lambda| - |\nu|}) \simeq 0$.
\end{itemize}
 Consequently, we obtain that $p_{-\,!} \, p_+^* \Big(\dSchur^{\lambda}  \big(R_{\GG_{d}^+}^\vee\big) \Big) \simeq  \dSchur^{\lambda} \big(R_{\GG_{d+1}^+}^\vee\big)$ as claimed. 

If $\lambda \in B_{n-d,d} \,\backslash\, B_{n-d-1,d}$ which means that  $\lambda_{n-d} \ge 1$ and $\lambda_1 \le d$, then we have $1 \le |\lambda| - |\nu| \le \lambda_1 \le d$ for all partitions $\nu$ satisfying  \eqref{eqn:lambda.nu.condition}. By applying Serre's vanishing theorem once again, we conclude that $p_{-\,!} \, p_+^* \Big(\dSchur^{\lambda}  \big(R_{\GG_{d}^+}^\vee\big) \Big) \simeq  0$ as desired.

\medskip

Next, we prove assertion \eqref{lem:flag:Lascoux-2}. Similarly as with Lemma \ref{lem:flip}, \cite[Proposition 4.19]{J22b} implies that the projection $p_+$ factorizes through a composite map
	$$\Flag(\sE;d,d+1) \xrightarrow{\iota} \PP_{\Grass(\sE;d)} (R_{\GG_d^+}) \xrightarrow{\pr} \Grass(\sE;d)$$
where $\iota$ is a closed immersion induced by a regular section of the vector bundle $W^\vee \otimes \sL_{d+1}$, and $\pr$ is the canonical projection. Therefore, we have a canonical equivalence
	$$\Psi\Big(\dSchur^{\lambda} \big(R_{\GG_{d+1}^+}^\vee\big)\Big)  \simeq \pr_*\Big( \dSchur^{\lambda} \big(R_{\GG_{d+1}^+}^\vee\big)  \otimes \iota_{*} (\sO_{\Flag(\sE;d,d+1)}) \Big),$$
where $\iota_{*} (\sO_{\Flag(\sE;d,d+1)})$ is resolved by a Koszul complex whose $\ell$th terms are given by
	$$\big(\bigwedge\nolimits^{\ell}  W \big) \otimes \sL_{d+1}^{-\ell}  \quad \text{where} \quad 0 \le \ell \le m.$$
Since $\bigwedge\nolimits^{\ell}  W \simeq 0$ if $\ell >m$, we may assume $0 \le \ell \le n+k-d-1$.
By considering the spectral sequence which computes the above higher direct image $\pr_*\Big( \dSchur^{\lambda} \big(R_{\GG_{d+1}^+}^\vee\big)  \otimes \iota_{*} (\sO_{\Flag(\sE;d,d+1)}) \Big)$ (see \cite[Lemma B.1.5]{Laz}), it suffices to compute derived pushforwards of the form
	\begin{equation}\label{eqn:pushforward_dSchur_L^-ell}
	\pr_* \Big(\dSchur^{\lambda} \big(R_{\GG_{d+1}^+}^\vee\big) \otimes  \sL_{d+1}^{-\ell} \Big) \otimes \big(\bigwedge\nolimits^{\ell}  W\big) [\ell] \qquad 0 \le \ell \le n+k-d-1.
	\end{equation}
Using the equivalence $\PP(R_{\GG_d^+}) \simeq \Grass(R_{\GG_d^+}^\vee;n-d-1)$ and Theorem \ref{thm:BWB}.\eqref{thm:BWB-1}, we have
	$$\pr_* \Big(\dSchur^{\lambda} \big(R_{\GG_{d+1}^+}^\vee\big) \otimes  \sL_{d+1}^{-\ell} \Big) \simeq \pi_* \big( \sL(\lambda,\ell)\big),$$
where $\pi \colon \Flag(R_{\GG_d^+}^\vee; \underline{n-d}) \to \Grass(\sE;d)$ is the complete flag bundle of $R_{\GG_d^+}^\vee$ over $\Grass(\sE;d)$, and $\sL(\lambda,\ell)$ is the line bundle associated with the sequence 
	$(\lambda,\ell)= (\lambda_1, \ldots, \lambda_{n-d-1}, \ell).$
According to Borel--Weil--Bott theorem, let $\rho = (n-d-1, n-d-2, \ldots, 2, 1, 0)$, to compute the derived pushforward $\pi_* \big( \sL(\lambda,\ell)\big)$ it suffices to analyze the sequence
	\begin{equation}\label{eqn:widetildelambda+rho}
		(\lambda,\ell) + \rho= (\lambda_1 + n-d-1, \lambda_2 + n-d-2,  \ldots, \lambda_{n-d-1}+1, \ell).
	\end{equation}
		
First, we consider the case $\ell=0$. In this case, \eqref{eqn:widetildelambda+rho} is a partition, and Borel--Weil theorem (see \cite[Theorems 4.1.4]{Wey} or Theorem \ref{thm:BWB}.\eqref{thm:BWB-1}) implies that \eqref{eqn:pushforward_dSchur_L^-ell} is isomorphic to
	$$\pr_* \Big(\dSchur^{\lambda} \big(R_{\GG_{d+1}^+}^\vee\big) \Big)\simeq  \dSchur^{\lambda} \big(R_{\GG_{d}^+}^\vee\big). $$
	
Next, we consider the the case where $1 \le \ell \le n+k-d-1$. From Borel--Weil--Bott theorem (see \cite{Dem}, \cite[Theorem 4.1.10]{Wey} or Theorem \ref{thm:BWB}.\eqref{thm:BWB-2}), 
we obtain that \eqref{eqn:pushforward_dSchur_L^-ell} is nonzero only if the entries of  \eqref{eqn:widetildelambda+rho} are pairwise distinct. There are precisely $(n+k-d-1) - (n-d-1) = k$ such choices for $\ell$, all of the form $\lambda_j + n -d - j > \ell > \lambda_{j+1} + n - d - (j+1)$, where $1 \le j \le n-d-1$. For each such $\ell$ and $j$, it requires a minimal number of $(n-d-1-j)$ permutations of entries of \eqref{eqn:widetildelambda+rho} such that the resulting sequence
	$$(\lambda_1 + n-d-1, \ldots, \lambda_j + n -d - j, ~ \ell , ~ \lambda_{j+1} + n - d - (j+1) , \ldots, \lambda_{n-d-1}+1)$$
is strictly decreasing. Subtracting $\rho$ from the above sequence, we obtain a partition 
	$(\lambda_1, \ldots, \lambda_j, ~ \ell+j-(n-d-1), ~\lambda_{j+1} +1, \ldots, \lambda_{n-d-1}+1),$
which precisely corresponds to the partition $\lambda^{(i)}$ of \eqref{eqn:lambda^(i)}, where $i = \ell+j-(n-d-1)$. 

Conversely, for any $1 \le i \le k$, we let $1 \le j \le n-d-1$ be such that $\lambda_j  \ge i \ge \lambda_{j+1}+1$. In this case, $\ell := |\lambda^{(i)}| - |\lambda|$  is the unique integer in $[1, n+k-d-1]$ such that $\lambda_j + n -d - j > \ell > \lambda_{j+1} + n - d - (j+1)$. For each such $i$ and $\ell$, the Borel--Weil--Bott theorem implies that \eqref{eqn:pushforward_dSchur_L^-ell} is canonically equivalent to
	$$\dSchur^{\lambda^{(i)}}\big(R_{\GG_d^+}^\vee\big)[\ell-(n-d-1-j)] \otimes \big(\bigwedge\nolimits^{\ell}  W\big) = \dSchur^{\lambda^{(i)}}\big(R_{\GG_d^+}^\vee\big) \otimes \big(\bigwedge\nolimits^{|\lambda^{(i)}| - |\lambda|}  W\big) [i].$$
Hence the lemma is proved.
\end{proof}

Notice that Lemma \ref{lem:flag:Lascoux}.\eqref{lem:flag:Lascoux-2} is the only part of the proof of the main theorem in this paper where the characteristic-zero assumption is required.

\begin{corollary}
\label{cor:induction:pattern}
Assume we are in the same situation as Lemma \ref{lem:flag:Lascoux}.\eqref{lem:flag:Lascoux-2} and let $k$ be an integer such that $\max\{0,d-r+1\} \le k \le d$. We let $\Psi_i = \Psi $ be defined as in Notation \ref{notation:Psi_k}; that is $\Psi_i = \Psi(\blank) \otimes \det(\sQ_{\Grass(\sE;d)})^{\otimes i}
 \simeq \Psi(\blank) \otimes \det(R_{\GG_{d}^+}^\vee)^{\otimes i}$. 
For any $\ell, d' \ge 0$, we define 
	$$\shB_{\ell, d'} = \Big\langle \big\{ \dSchur^{\lambda}\big(R_{\GG_{n-\ell}^+}^\vee\big) \}_{\lambda \in B_{\ell,d'}}  \Big \rangle\subseteq \D(\Grass(\sE; n-\ell)).$$ 
Then for each integer $0 \le i \le k - \max\{0,d-r+1\}$, the restriction of the functor $\Psi_i$,
	$$\Psi_i|_{\shB_{n-d-1,k-i}} \colon  \shB_{n-d-1,k-i} \to \D(\Grass(\sE;d)),$$ 
is fully faithful, with essential image contained in $\shB_{n-d,k}$. Moreover, these functors $\Psi_i|_{\shB_{n-d-1,k-i}}$, for $0 \le i \le k - \max\{0,d-r+1\}$, induce a semiorthogonal decomposition 
\begin{equation}\label{eqn:sod:induction}
\shB_{n-d,k} =  
	\Big \langle  
\big \langle \Psi_{k-i} (\shB_{n-d-1,i}) \big\rangle_{i \in [0,  \max\{0,d-r+1\}]} ~, ~
\Psi_{k-d+r}^0(\shB_{n-d,d-r} ) \Big \rangle,
\end{equation}
where $\Psi_{k-d+r}^{0}$ denotes the functor $\otimes  \det(\sQ_{\Grass(\sE;d)})^{\otimes (k-d+r)}$, the last component is understood as empty if $d < r$, and the semiorthogonal order of the first part is given by the usual order $<$ of integers in $[0,  \max\{0,d-r+1\}]$, that is: for all $0 \le j < i \le  k - \max\{0,d-r+1\}$, $\Map( \Psi_{k-i} (\shB_{n-d-1,i}),  \Psi_{k-j} (\shB_{n-d-1,j})) \simeq 0$.

\begin{proof}
We will only prove the case where $d \ge r$; the other case where $d < r$ is similar and simpler. Notice that Lemma \ref{lem:flag:Lascoux}.\eqref{lem:flag:Lascoux-2} implies that $\Psi_i\big(\dSchur^{\lambda}(R_{\GG_{d+1}^+}^\vee)\big)  \in \big \langle \{\dSchur^{\lambda}(R_{\GG_{d}^+}^\vee)\}_{\lambda \in B_{n-d,k}} \big \rangle$ for all $0 \le i \le k - d+r-1$ and $\lambda \in B_{n-d-1,k-i}$. This proves the assertion that the essential image $\Psi_i|_{\shB_{n-d-1,k-i}}$ is contained in $\shB_{n-d,k}$. 

To establish the desired semiorthogonal decomposition \eqref{eqn:sod:induction}, it suffices to prove:
\begin{enumerate}[leftmargin=*]

	\item[$(a)$] Fully-faithfulness: The counit map $\Psi_i^L \Psi_i \to \id$ is an equivalence when restricted to the subcategory $\shB_{n-d-1,k-i}$, where $0 \le i \le k - d+r-1$. As with Corollary \ref{cor:ff:incidence} and from the definition of $\shB_{n-d-1,k-i}$, it suffices to prove that 
	 $\Psi_i^L \Psi_i \Big(\dSchur^{\lambda}\big(R_{\GG_{d+1}^+}^\vee\big)\Big) \to \dSchur^{\lambda}(R_{\GG_{d+1}^+}^\vee)$ is an equivalence for all $\lambda \in B_{n-d-1,k-i}$. This follows from Lemma \ref{lem:flag:Lascoux}.\eqref{lem:flag:Lascoux-1}-\eqref{lem:flag:Lascoux-2}.
	 
	 \item[$(b)$] Semiorthogonality:
	 \begin{enumerate}[leftmargin=*]
		\item 
		For all $0 \le j < i \le k - d+r-1$, $\Psi_j^L \Psi_i \simeq 0$ when restricted to the subcategory $\shB_{n-d-1,k-i}$. As before, it suffices to prove that $\Psi_j^L \Psi_i \Big(\dSchur^{\lambda}\big(R_{\GG_{d+1}^+}^\vee\big)\Big)\simeq 0$ for all $\lambda \in B_{n-d-1,k-i}$. This is again a direct consequence of Lemma \ref{lem:flag:Lascoux}.\eqref{lem:flag:Lascoux-1}-\eqref{lem:flag:Lascoux-2}.
		\item
		For all $0 \le j < i \le k - d+r-1$, the restriction of the functor $\Psi_i^L$ to $\Psi_{k-d+r}^0(\shB_{n-d,d-r})$ is equivalent to zero. Once again, it suffices to prove that for any $\alpha \in B_{n-d,d-r}$,  
		$\Psi_i^L \Big(\dSchur^{\alpha}\big(R_{\GG_{d}^+}^\vee\big) \otimes \det(R_{\GG_{d}^+}^\vee)^{(k-d+r)} \Big)\simeq 0$.
		Since $d \ge r$ and $0 \le i \le k-d+r-1$, we have  $1 \le k-d+r -i \le d$. Hence $\dSchur^{\alpha}\big(R_{\GG_{d}^+}^\vee\big) \otimes \det(R_{\GG_{d}^+}^\vee)^{(k-d+r)} \in B_{n-d,d} \backslash B_{n-d-1,d}$, and the desired result follows from Lemma \ref{lem:flag:Lascoux}.\eqref{lem:flag:Lascoux-1}. 
	\end{enumerate}
	
	\item[$(c)$] Generation: 
	To complete the proof, we will show that any element $\dSchur^{\alpha}(R_{\GG_{d}^+}^\vee)$, where $\alpha \in B_{n-d,d-r}$, belongs to the right-hand side of \eqref{eqn:sod:induction}. 
	We will establish this result using induction.
	Let us introduce the following notations:
	for any $\nu \in B_{n-d,w}$ and $i \in \ZZ$, where $w \ge 0$ is an integer, we let $\nu(i) = (\nu_1+i, \ldots, \nu_{n-d}+i)$. Let $B_{n-d,w}(i) : = \{\nu(i) \mid \nu \in B_{n-d,w}\}$. 
	Using these notations, we can express a disjoint union decomposition as follows:
	$$B_{n-d,k} = B_{n-d,d-r}(k-d+r) \sqcup \bigsqcup_{i=0}^{n-d+r-1} B_{n-d-1,k-i}(i).$$
It is clear that if $\alpha \in B_{n-d,d-r}(k-d+r)$, meaning that $\alpha = \nu(k-d+r)$ for some $\nu \in B_{n-d,d-r}$, then $\dSchur^{\alpha}(R_{\GG_{d}^+}^\vee) = \dSchur^{\nu}(R_{\GG_{d}^+}^\vee) \otimes \det(R_{\GG_{d}^+}^\vee)^{(k-d+r)}$ belongs to the right-hand side of \eqref{eqn:sod:induction}. 
Now we assume that $\alpha \in B_{n-d-1,k-i}(i)$, that is, $\alpha = \nu(i)$ for some $\nu \in B_{n-d-1,k-i}$. According to Lemma \ref{lem:flag:Lascoux}.\eqref{lem:flag:Lascoux-2}, there is a canonical map $\dSchur^{\alpha}(R_{\GG_{d}^+}^\vee)  \to \Psi_i\big(\dSchur^{\nu}(R_{\GG_{d+1}^+}^\vee)\big)$, and the cone of this map is given by iterated extensions of elements of the form $\dSchur^{\beta}(R_{\GG_{d}^+}^\vee) \otimes K$, where $\beta \in B_{n-d,d-r}(k-d+r) \sqcup \bigsqcup_{j=i+1}^{n-d+r-1} B_{n-d-1,k-j}(j)$ and $K$ is a finite free $\kk$-module. Consequently, the desired result regarding generation follows from induction.
\end{enumerate}
\end{proof}
\end{corollary}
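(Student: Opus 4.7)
The plan is to verify the three standard properties of a semiorthogonal decomposition—fully-faithfulness of each component, semiorthogonality between components in the stated order, and generation of the ambient subcategory $\shB_{n-d,k}$. Since each subcategory $\shB_{n-d-1, k-i}$ is thickly generated by the Schur powers $\dSchur^{\lambda}(R_{\GG_{d+1}^{+}}^\vee)$ with $\lambda \in B_{n-d-1, k-i}$, and all functors involved preserve small colimits, every check reduces to evaluating on these explicit generators, where Lemma~\ref{lem:flag:Lascoux} provides the necessary computations. I will describe the argument assuming $d \ge r$; the case $d < r$ runs analogously but without the final component.

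For fully-faithfulness of $\Psi_i|_{\shB_{n-d-1, k-i}}$, the observation that $\Psi_i^L \circ \Psi_i = \Psi^L \circ \Psi$ (the $\det(R_{\GG_{d}^{+}}^\vee)^{\pm i}$ twists cancel) reduces matters to checking that the counit $\Psi^L \Psi(\dSchur^\lambda(R_{\GG_{d+1}^{+}}^\vee)) \to \dSchur^\lambda(R_{\GG_{d+1}^{+}}^\vee)$ is an equivalence for each $\lambda \in B_{n-d-1, k-i}$. I would apply Lemma~\ref{lem:flag:Lascoux}.\eqref{lem:flag:Lascoux-2} to resolve $\Psi(\dSchur^\lambda(R_{\GG_{d+1}^{+}}^\vee))$ by a finite Lascoux-type complex with terms $\dSchur^{\lambda^{(j)}}(R_{\GG_{d}^{+}}^\vee) \otimes \bigwedge^{|\lambda^{(j)}|-|\lambda|}(W)$, and then apply Lemma~\ref{lem:flag:Lascoux}.\eqref{lem:flag:Lascoux-1} term-by-term: every partition $\lambda^{(j)}$ for $j \ge 1$ has last entry strictly positive, hence lies in $B_{n-d, d} \setminus B_{n-d-1, d}$ and is killed by $\Psi^L$; only the $j=0$ term $\dSchur^\lambda(R_{\GG_{d+1}^{+}}^\vee)$ survives.

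Semiorthogonality splits into two types. For $0 \le j < i \le k - d + r - 1$, evaluating $\Psi_j^L \Psi_i$ on a generator $\dSchur^\lambda(R_{\GG_{d+1}^{+}}^\vee)$ amounts to $\Psi^L$ applied to the Lascoux resolution of $\Psi(\dSchur^\lambda(R_{\GG_{d+1}^{+}}^\vee))$ twisted by $\det(R_{\GG_{d}^{+}}^\vee)^{i-j}$; this twist adds $i-j \ge 1$ to every entry of each $\lambda^{(j')}$, so the resulting partitions have last entry $\ge 1$, lie outside $B_{n-d-1, d}$, and are killed by Lemma~\ref{lem:flag:Lascoux}.\eqref{lem:flag:Lascoux-1}. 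For orthogonality against the final piece $\Psi_{k-d+r}^{0}(\shB_{n-d, d-r})$, its generators $\dSchur^\alpha(R_{\GG_{d}^{+}}^\vee) \otimes \det(R_{\GG_{d}^{+}}^\vee)^{k-d+r}$ with $\alpha \in B_{n-d, d-r}$ correspond to partitions whose last entry exceeds $k-d+r$; applying $\Psi_i^L$ untwists this by $\det^{-i}$, leaving the last entry still $\ge k-d+r-i \ge 1$, and Lemma~\ref{lem:flag:Lascoux}.\eqref{lem:flag:Lascoux-1} once more yields vanishing.

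For generation, I would exploit the disjoint decomposition
\begin{equation*}
B_{n-d,k} = B_{n-d,d-r}(k-d+r) \,\sqcup\, \bigsqcup_{i=0}^{k-d+r-1} B_{n-d-1,k-i}(i),
\end{equation*}
classifying partitions in $B_{n-d, k}$ by their last entry, where $\nu(i) := (\nu_1 + i, \ldots, \nu_{n-d} + i)$. Generators indexed by $B_{n-d, d-r}(k-d+r)$ clearly lie in the final piece once $\det^{k-d+r}$ is extracted. For $\alpha = \nu(i) \in B_{n-d-1, k-i}(i)$, Lemma~\ref{lem:flag:Lascoux}.\eqref{lem:flag:Lascoux-2} applied to $\nu$ and twisted by $\det^i$ produces a canonical map $\dSchur^\alpha(R_{\GG_{d}^{+}}^\vee) \to \Psi_i(\dSchur^\nu(R_{\GG_{d+1}^{+}}^\vee))$ whose cofiber is iteratively built from objects $\dSchur^\beta(R_{\GG_{d}^{+}}^\vee) \otimes K$ with $K$ a finite free $\kk$-module and $\beta \in B_{n-d-1, k-j}(j)$ for some $j > i$ or $\beta \in B_{n-d, d-r}(k-d+r)$; reverse induction on $i$ then closes the argument. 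The principal obstacle is the combinatorial bookkeeping here — verifying that every partition $\lambda^{(j')}$ appearing in the twisted Lascoux complex genuinely lands strictly higher in the above disjoint decomposition than the starting piece, so the induction is well-founded; the bound $k \le d$ is precisely what prevents any term from overshooting the maximum $d$.
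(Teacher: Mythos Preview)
Your proposal is correct and follows essentially the same route as the paper's own proof: reduce each of fully-faithfulness, semiorthogonality, and generation to the generators $\dSchur^{\lambda}(R_{\GG_{d+1}^{+}}^\vee)$, resolve $\Psi(\dSchur^{\lambda})$ via the Lascoux-type complex of Lemma~\ref{lem:flag:Lascoux}\eqref{lem:flag:Lascoux-2}, and kill the unwanted terms with Lemma~\ref{lem:flag:Lascoux}\eqref{lem:flag:Lascoux-1}; the generation step is the same reverse induction over the disjoint decomposition of $B_{n-d,k}$ by the last entry of the partition. Your upper index $k-d+r-1$ in that decomposition is the correct one (the paper writes $n-d+r-1$, which is a typo).
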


\subsubsection{Proof of Theorem \ref{thm:SOD}, Part 2}
\label{subsec:proof:SOD}
We now complete the proof of Theorem \ref{thm:SOD} by establishing the theorem in the universal local situation  \S \ref{subsec:setup:local} and when $\D = \Dperf$, using the preparations made in the preceding subsections.

If $d = 0$ or $r=0$, the desired result follows directly from Corollary \ref{cor:ff:incidence}. Therefore, we may assume $d$ and $r$ are both greater than zero. 

We now generate a semiorthogonal decomposition of $\D(\Grass(\sE;d))$ by iteratively applying Corollary \ref{cor:induction:pattern}. Let us describe the process:

\begin{itemize}
	\item[(*)]
	Starting with the case where $k=d$, we apply Corollary \ref{cor:induction:pattern} and obtain a semiorthogonal decomposition of $\D(\Grass(\sE;d)) = \shB_{n-d,d}$. This decomposition takes the form of the form \eqref{eqn:sod:induction} and its components are given by the images $\Psi_i (\shB_{n-d-1, d-i})$ for $0 \le i \le d - \max\{0, d-r+1\}$, and $\Psi_{d-r}^0(\shB_{n-d,d-r})$ (if $d \ge r$). Notably, the appearing subcategories $\shB_{a,b}$ as the domains of $\Psi_i$ or $\Psi_{d-r}^0$ satisfy the condition $n - r \le a+b \le n - 1$. 

	For any subcategory $\shB_{a,b}$ appearing in the above decomposition with $a+b>n -r$ (implying $\shB_{a,b} = \shB_{n-d-1, d-j}$ for some $j \ge 0$), we further decompose $\shB_{a,b}$ by applying Corollary \ref{cor:induction:pattern} again. The involved subcategories $\shB_{a',b'}$ appearing of this decomposition  again satisfy  $n - r \le a'+b' \le a+b-1 \le n - 2$. We continue this process, applying Corollary \ref{cor:induction:pattern} to each involved subcategory $\shB_{a',b'}$ such that $a'+b'> n-r$, until all the subcategories are of the form $\shB_{a'',b''}$, where $a''+b''=n-r$.
\end{itemize}

The above process $(*)$  clearly terminates in a finite number of steps. At the end, we obtain a semiorthogonal decomposition of $\D(\Grass(\sE;d))$ whose components are given by fully faithful images of subcategories of the form $\shB_{n-d-r+i,d-i}$, where $0 \le i \le \min\{r,d\}$. Each such category $\shB_{n-d-r+i,d-i}$ is embedded via a functor of the form:
\begin{equation}\label{eqn:thm:SOD:local:embedding}
	\Psi_{a_1} \circ \Psi_{a_2} \circ \cdots \circ \Psi_{a_{r-i}} \circ \Psi_{i - \sum a_j}^0 \colon \shB_{n-d-r+i,d-i} \to \D(\Grass(\sE;d)),
	\end{equation}
where $\Psi_{i - \sum a_j}^0= \otimes \det(\sQ_{\Grass(\sE;d+r-i)})^{i - \sum a_j}$; the notation $\Psi_{i - \sum a_j}^0$ indicates that it is a ``zero-times composition of $\Psi$'s, further twisted by a line bundle of degree $(i - \sum a_j)$". Here, $a_1, \ldots, a_{r-i} \ge 0$ is a (possibly empty) sequence of integers with with $\sum a_j \le i$. If $i=r$, we understand $a_1,\ldots,a_0$ as the empty sequence and \eqref{eqn:thm:SOD:local:embedding} as the functor $\Psi_{r}^0 = \otimes \det(\sQ_{\Grass(\sE;d)})^{r}$.

Conversely, for any (possibly empty) sequence of integers $a_1, \ldots, a_{r-i} \ge 0$ with $\sum a_j \le i$, there is precisely one copy of $\shB_{n-d-r+i,d-i}$ embedded as the image of the functor \eqref{eqn:thm:SOD:local:embedding} in the semiorthogonal decomposition obtained through the above process $(*)$. Moreover, for any given $0 \le i \le \min\{r,d\}$, such a (possibly empty) sequence $a_1, \ldots, a_{r-i}$ is in one-to-one correspondence with a (possibly zero) partition $\lambda \in B_{r-i,i}$ via the formula
	\begin{align}
	&a_1  = i -\lambda_1, \quad   a_2 = \lambda_1 - \lambda_2, \quad  \ldots, \quad  a_{r-i} = \lambda_{r-i-1} - \lambda_{r-i}. \label{eqn:SOD:a_lambda}
	\end{align}
Here, if $r=i$, the empty sequence $a_1, \ldots, a_0$ corresponds to the zero partition $(0)\in B_{0,r}$.

For each such (possibly empty) sequence $a_1, \ldots, a_{r-i}$ (or equivalently, for each partition $\lambda \in B_{r-i,i}$, in view of \eqref{eqn:SOD:a_lambda}), composing \eqref{eqn:thm:SOD:local:embedding} with the equivalence of Corollary \ref{cor:ff:incidence},
	$$\Phi^{(0)}_{(d+r-i,d-i)} \colon \D(\Grass(\sE^\vee[1];d-i)) \xrightarrow{\simeq} \shB_{n-d-r+i,d-i},$$
we obtain precisely one copy of $\D(\Grass(\sE^\vee[1];d-i))$ embedded into $\D(\Grass(\sE;d))$ via the fully faithful functor
	$$\Psi_{a_1} \circ \cdots \circ \Psi_{a_{r-i}} \circ (\otimes \det(\sQ_{\Grass(\sE;d+r-i)})^{i - \sum a_j})  \circ  \Phi_{(d+r-i,d-i)}^{(0)} \simeq \Phi^{(i, \lambda)}_{(d,d-i)},$$
where the last equivalence follows from Corollary \ref{cor:forg:FM:Incidence} and \eqref{eqn:SOD:a_lambda}.

	To summarize, for each $0 \le i \le \min\{r,d\}$ and each partition $\lambda \in B_{r-i,i}$, we obtain an embedding of $\D(\Grass(\sE^\vee[1], d-i))$ into $\D(\Grass(\sE,d))$ via the fully faithful functor $\Phi^{(i,\lambda)} = \Phi^{(i, \lambda)}_{(d,d-i)}$. All the components produced in the process $(*)$ can be expressed in this form in a unique way. Therefore, we have obtained the desired semiorthogonal decomposition.

Furthermore, it is clear from the Corollary \ref{cor:induction:pattern} and the process $(*)$ that the resulting semiorthogonal decomposition has the semiorthogonal order given by the lexicographic order $<_{\rm lex}$ of the sequences $(a_1, a_2, \ldots, a_{r-i}, 0, 0, \ldots)$ indexing the components embedded via the functors \eqref{eqn:thm:SOD:local:embedding}, where the empty sequence represents the largest element. 
In view of \eqref{eqn:SOD:a_lambda}, this is equivalently to the order $<_{\rm diff}$ on the pairs $(i,\lambda)$ defined in Notation \ref{notation:differene.order}.

 This concludes the proof of Theorem \ref{thm:SOD} in the universal local situation. By combining it with the argument presented in \S \ref{sec:SOD}, we have completed the proof of Theorem \ref{thm:SOD}. \hfill $\square$


\section{Applications}
In this section, we explore some of the applications of Theorem \ref{thm:SOD} in  classical scenarios.  
We will fix a $\QQ$-algebra $\kk$, and consider schemes, morphisms, and classical fiber products within the category of $\kk$-schemes. We let the symbol $\D$ denote $\Dqc$, $\Dpc$, $\Dltpc$ or $\Dperf$.

\subsection{Blowups of Determinantal Ideals}
\label{sec:blowup.det}
We consider a  $\kk$-scheme with $Z \subseteq X$ a determinantal subscheme of codmension $(r +1)$, where $r \ge 1$. For simplicity, we define $Z$ as the zero subscheme of a Fitting ideal ${\rm Fitt}_{r}(\sH^0(\sE))$ (see \cite[\href{https://stacks.math.columbia.edu/tag/0C3C}{Tag 0C3C}]{stacks-project}), where $\sE$ is a perfect complex with Tor-amplitude in $[0,1]$ and rank $r$ and $\sH^0(\sE)$ is the zeroth sheaf homology of $\sE$.

We consider the projection $\pi =\pr_{\Grass(\sE;r)} \colon \Grass_X(\sE;r) \to X$.
Assuming that $\Grass_X(\sE;r)$ is a classical scheme and that $E:=\pi^{-1}(Z) \subseteq \Grass_X(\sE;r)$ is an effective Cartier divisor, then $\Grass_X(\sE;d)$ is isomorphic to the (classical) blowup 
	$\pi \colon \Bl_{Z}(X) = \underline{\Proj}_{X} \bigoplus_{n \ge 0} \sI_Z^n \to X$ 
	of $X$ along $Z$, with $\det (\sQ_{\Grass(\sE;d)}) = \sO_{\Bl_Z X}(-E) \otimes \det (\sE)$ (see \cite[Lemma 2.24]{J21}).  

For each $j \ge 0$, we let $X_j$ ($=X^{\ge r+j}(\sH^0(\sE))$ of \cite{J21}) be the closed subscheme defined by the Fitting ideal ${\rm Fitt}_{r-1+j}(\sH^0(\sE))$; notice then $X_0 = X$ and $X_1 = Z$. We write
	$$\widetilde{X_j} : = \Grass_X(\sE^\vee[1]; j) \to X.$$
The underlying classical map of $\widetilde{X_j} \to X$ factorizes through $X_j$, and is an isomorphism over $X_j \, \backslash X_{j+1}$ (see \cite[\href{https://stacks.math.columbia.edu/tag/05P8}{Tag 05P8}]{stacks-project} or \cite[Corollary 2.8]{J21}). Therefore, we can view $\widetilde{X_j}$ as a {\em (possibly derived) partial desingularitzation} of the higher determinantal subscheme $X_j$. For example, in the case where $X$ is an irreducible Cohen--Macaulay subscheme and $X_j \subseteq X$ have expected codimensions $j(r+j)$ for all $j \ge 1$, then $\widetilde{X_j}$ are classical irreducible Cohen--Macaulay schemes and $\widetilde{X_j} \to X_j$ are ${\rm IH}$-small partial desingularitzations (see \cite[Theorem 5.2]{J20}) for all $j \ge 1$.
	
For each $j \ge 0$, the incidence locus $\Incidence_{(r,j)}(\sE)$ is a possibly derived scheme, whose underlying classical scheme is the classical fiber product $\Bl_{Z}(X)\times_{X}^{\rm cl} \widetilde{X_j}$, and $\sE_{(r,j)}^{\rm univ}$ is a universal perfect complex of rank $j$ and Tor-amplitude in $[0,1]$ on $\Incidence_{(r,j)}(\sE)$. 
For each $j \ge 0$ and $\lambda \in B_{j, r-j}$, we consider the Fourier--Mukai functors
	$$\Omega_{(j,\lambda)} :=  r_{+\,*} \left(r_-^*(\blank) \otimes \dSchur^{\lambda}(\sE^{\rm univ}_{(r,j)}) \right) \otimes \sO_{\Bl_Z(X)}(j E)  \colon \D(\widetilde{X_j}) \to \D(\Bl_Z(X)),$$
where $r_\pm$ are the natural projection maps in the incidence diagram \eqref{diagram:Inc:Corr},
	$$\widetilde{X_j} \xleftarrow{r_-} \Incidence_{(r,j)}(\sE) \xrightarrow{r_+} \Bl_Z(X).$$
We denote the essential image 	of $\Omega_{(j,\lambda)}$ by $\D(\widetilde{X_j})_{(j,\lambda)}$.
When $j=0$, the functor $\Omega_{(0,(0))}$ is the pullback functor $\pi^* \colon \D(X) \to \D(\Bl_Z(X))$, and we denote its essential image by $\D(X)_0$. 

As a result of our main theorem  \ref{thm:SOD}, we obtain the following corollary:

\begin{corollary}[Blowup formula for determinantal ideals]
\label{cor:SOD:blowup.det}
In the situation of a determinantal subscheme $Z \subseteq X$ of codmension $(r +1)$ as described above, where $r \ge 1$, the functors $\Omega_{(j,\lambda)}$ are fully faithful for all $j \ge 0$ and $\lambda \in B_{j, r-j}$. Moreover, these functors $\Omega_{(j,\lambda)}$, where $0 \le j \le r$ and $\lambda \in B_{j, r-j}$, induce a semiorthogonal decomposition 
\begin{align*}
 \D\left(\Bl_{Z}(X)\right) = 
   \left \langle  
	\Big \langle  \D(\widetilde{X_j})_{(j,\lambda)}
	\mid 1 \le j \le r, \, \lambda \in B_{j,r-j}
	\Big \rangle,
	  ~ \D(X)_0
  \right \rangle,
\end{align*}
with semiorthogonal order given as follows:  $\Map\big(  \D(\widetilde{X_j})_{(j,\lambda)},  \D(\widetilde{X_k})_{(k,\mu)} \big) \simeq 0$ if $(r-k,\mu) < (r-j,\lambda)$, where $<$ is the total order defined in Notation \ref{notation:differene.order}.
\end{corollary}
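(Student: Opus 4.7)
The plan is to deduce the corollary as a direct translation of Theorem \ref{thm:SOD} in the special case $d = r$. Under the hypotheses that $\Grass_X(\sE; r)$ is classical and $E = \pi^{-1}(Z)$ is a Cartier divisor, \cite[Lemma 2.24]{J21} identifies $\Grass_X(\sE; r) \cong \Bl_Z(X)$ and supplies the formula $\det(\sQ_{\Grass(\sE;r)}) \cong \sO_{\Bl_Z X}(-E) \otimes \pi^* \det(\sE)$. With this identification, Theorem \ref{thm:SOD} produces an SOD of $\D(\Bl_Z X)$ whose components are $\Im(\Phi^{(i, \lambda)})$ for $0 \le i \le r$ and $\lambda \in B_{r-i, i}$, where each $\Phi^{(i, \lambda)} = \Phi^{(i, \lambda)}_{(r, r-i)}$ maps $\D(\Grass_X(\sE^\vee[1]; r-i)) \to \D(\Bl_Z X)$. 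The substitution $j := r - i$ reparametrizes the index set exactly as in the corollary, since $B_{r-i, i} = B_{j, r-j}$ and $\D(\Grass_X(\sE^\vee[1]; j)) = \D(\widetilde{X_j})$.

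The main---and really only---computational step is to compare the essential image of $\Phi^{(r-j, \lambda)}$ with that of $\Omega_{(j, \lambda)}$. Writing $q \colon \widetilde{X_j} \to X$ for the structure map, commutativity of the incidence square \eqref{diagram:Inc:Corr} gives $r_+^* \pi^* = r_-^* q^*$; together with the projection formula, this shows that the essential image of $\Phi^{(r-j, \lambda)}$ is stable under tensoring by $\pi^* \det(\sE)^{\otimes k}$, since any such twist can be absorbed into the auto-equivalence $\sF \mapsto \sF \otimes q^*\det(\sE)^{\otimes k}$ on $\D(\widetilde{X_j})$. Substituting $\det(\sQ)^{r-j} = \sO(-(r-j)E) \otimes \pi^*\det(\sE)^{r-j}$ into the definition of $\Phi^{(r-j, \lambda)}$ and comparing with $\Omega_{(j, \lambda)}$ then yields
\[
\Im\bigl(\Omega_{(j, \lambda)}\bigr) \;=\; \Im\bigl(\Phi^{(r-j, \lambda)}\bigr) \otimes \sO_{\Bl_Z X}(rE).
\]
The decisive point is that the twist $\sO(rE)$ is \emph{independent} of $(j, \lambda)$: the competing twists $\sO(jE)$ and $\det(\sQ)^{r-j}$ differ precisely by the $j$-independent quantity $\sO(rE)$ because $i + j = r$. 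The main obstacle is this line-bundle bookkeeping; otherwise the corollary is largely a direct translation of Theorem \ref{thm:SOD}.

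Since tensoring by a fixed line bundle on the target is an auto-equivalence of $\D(\Bl_Z X)$ and preserves semiorthogonal decompositions, the components $\Im(\Omega_{(j, \lambda)})$ inherit the SOD structure from the $\Im(\Phi^{(r-j, \lambda)})$, and each $\Omega_{(j, \lambda)}$ remains fully faithful. The semiorthogonal order in the corollary is obtained by direct substitution $i = r - j$ into Notation \ref{notation:differene.order}: the main theorem's condition $(i_2, \mu) < (i_1, \lambda)$ translates to $(r-k, \mu) < (r-j, \lambda)$, as stated. Finally, the rightmost component $\D(X)_0$ corresponds to $(i, \lambda) = (r, (0))$, i.e.\ $j = 0$; since $\Incidence_{(r, 0)}(\sE) \simeq \Grass(\sE; r) = \Bl_Z X$ and the $\sO(0)$-twist is trivial, one reads off $\Omega_{(0, (0))} = \pi^*$ directly from the definitions, confirming $\D(X)_0 = \Im(\pi^*)$.
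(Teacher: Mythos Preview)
Your proof is correct and follows exactly the route the paper intends: the paper states the corollary as a direct consequence of Theorem~\ref{thm:SOD} without giving an explicit proof, and you have correctly filled in the reparametrization $j = r-i$ and the line-bundle bookkeeping (in particular the observation that the $j$-dependent twist $\pi^*\det(\sE)^{-(r-j)}$ can be absorbed into an autoequivalence of the source via $r_+^*\pi^* \simeq r_-^* q^*$, leaving only the uniform twist $\sO(rE)$). This is precisely the computation the paper leaves implicit.
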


This result generalizes both Orlov's blowup formula \cite{Orlov92} and the formula for blowups of Cohen--Macaulay subschemes of codimension $2$ (\cite{JL18, J21}).
If we base-change the above semiorthogonal decomposition to the Zariski open subset $X \backslash Z_2$, we recover Orlov's blowup formula for the local complete intersection (l.c.i.) closed immersion $(Z\backslash Z_2) \subseteq (X \backslash Z_2)$. Therefore, the above formula extends Orlov's to the non-l.c.i. loci of $Z \subseteq X$. The ``corrections" to Orlov's formula in this situation are precisely given by copies of derived categories of the partial resolutions $\widetilde{Z_j}$ of the higher determinantal loci $Z_j \subseteq Z$ for $2 \le j \le r$.

\begin{remark}
Even if we don't assume that $\Grass_X(\sE;r)$ is classical and $\pi^{-1}(Z) \subseteq \Grass_X(\sE;r)$ is an effective Cartier divisor, the semiorthogonal decomposition described in Corollary \ref{cor:SOD:blowup.det} still applies to $\D(\Grass_X(\sE;r))$. However, in this situation, $\Grass_X(\sE;r)$ is no longer isomorphic to the classical blowup $\Bl_Z(X)$. Instead, we should regard $\Grass_X(\sE;r)$ as a derived version of blowup of $X$ along $Z$. We expect this perspective to be closely related to the concept of a derived blowup of Hekking, Khan and Rydh (see \cite{KR18, He21}).
\end{remark}

\subsection{Reducible Schemes}
\label{sec:reducible}
We consider two classes of reducible schemes. 
\subsubsection{}
Let $X$ be a $\kk$-scheme, and let $Z \subseteq X$ a regularly immersed closed subscheme of codmension $r \ge 1$ with normal bundle $\sN_{Z/X}$. For simplicity, we assume that $Z$ is the zero locus of a regular section $s$ of a rank $r$ vector bundle $\sV$ over $X$. 
We also consider a line bundle $\sL$ on $X$, and denote by $\sL_Z$ the restriction of $\sL$ to $Z$. We define a perfect complex $\sE$ of Tor-amplitude in $[0,1]$ and rank $r$ as follows:
	$$\sE = \left [ \sO_X \xrightarrow{(s, 0)^T} \sV \oplus \sL \right ] \quad \text{so that} \quad \sE^\vee[1] \simeq   \left [ \sV^\vee \oplus \sL^\vee \xrightarrow{(s^\vee, 0)} \sO_X \right]. $$
We have the following observations:
\begin{enumerate}[leftmargin=*]
	\item The derived Grassmannian $\Grass_X(\sE; r)$ is isomorphic to the classical reducible scheme
		$$\Bl_Z(X) \bigsqcup_{\PP_{Z}(\sN_{Z/X}^\vee)} \PP_{Z}(\sN_{Z/X}^\vee \oplus \sL_Z^\vee),$$ 
where $\PP_{Z}(\sN_{Z/X}^\vee) \subseteq \Bl_Z(X)$ is the inclusion of the exceptional divisor, and $\PP_{Z}(\sN_{Z/X}^\vee) \subseteq \PP_{Z}(\sN_{Z/X}^\vee \oplus \sL_Z^\vee)$ is the closed immersion induced by $\sN_{Z/X} \subseteq \sN_{Z/X}  \oplus \sL_Z$. The scheme structure is described as follows. By working Zariski locally on $X$, we may assume that $X = \Spec R$ for some commutative ring $R$, $\sV = \sO_X^r$ and $\sL = \sO_X$, and $s$ is given by a regular sequence $(x_1, \ldots, x_r)$ of $R$. Using \cite[Proposition 4.19]{J22b} and the fact that $s$ is regular, we obtain that the regular closed immersion
	$$\Grass_X(\sE;r) \hookrightarrow \Grass_X(\sO_X \oplus \sO_X^{r}; r) \simeq \Spec R \times \PP^r$$
is identified with the inclusion of the {\em classical} subscheme defined by the equations  
		$$x_i X_j - x_j X_i = 0 \quad \text{for} \quad 1 \le i < j \le r,  \quad \text{and} \quad x_k X_0 =0 \quad \text{for} \quad 1 \le k \le r,$$
	where $[X_0: X_1: \ldots: X_r]$ denotes the homogeneous coordinates of $\PP^r$. 
	
	(In fact, one can work over affine charts of $\PP^r$ as follows. For any $1 \le i \le r$, let $U_i = \{X_i \ne 0\} \simeq \AA^r \subseteq \PP^r$, with affine coordinates $(u_0, \ldots, \widehat{u_i}, \ldots, u_r)$, $u_j = X_j/X_i$ for $j\neq i$. In the local chart $\Spec R \times U_i$,  $\Grass_X(\sE;r)$ is defined by the equations $\{x_j = x_i u_j \mid j\neq 0,r\}$ together with $u_0 \cdot x_i = 0$. The first $(r-1)$ equations $\{x_j = x_i u_j\}$ are precisely the defining equations for the blowup $\Bl_Z(X)$ in $X \times \AA^{r-1} = \Spec R[u_1,\ldots, \widehat{u_i}, \ldots, u_r]$, which we shall denote as $\Bl_Z(X)_{U_i}$. The last equation $u_0 \cdot x_i =0$ defines a normal crossing divisor in $\Bl_Z(X)_{U_i} \times \Spec \kk[u_0]$, where the two divisors $\{u_0 = 0\} \simeq \Bl_Z(X)_{U_i}$ and $\{x_i =0\} \simeq Z \times U_i$ intersect along $\{u_0 = x_i = 0\} \simeq Z \times \{u_0 = 0\} \simeq Z \times \AA^{r-1}$.)
	
\item 
By virtue of \cite[Example 4.35]{J22a}, we have a canonical equivalence
	$$q_Z \colon \PP_X(\sE^\vee[1]) \simeq {\rm Tot}_Z(\sL_Z[-1]) \to Z,$$
where ${\rm Tot}_Z(\sL_Z[-1]) = \Spec \Sym_Z^*(\sL_Z^\vee[1])$ denotes total space of $\sL_Z[-1]$.

\item
 The map $r_-$ exhibits the incidence locus $\Incidence_{(r,1)}(\sE)$ as the projective bundle
		$$r_- =q  \colon \Incidence_{(r,1)}(\sE) \simeq \PP_{{\rm Tot}_Z(\sL_Z[-1])}(p_Z^*(\sN_{Z/X}^\vee \oplus \sL_Z^\vee)) \to {\rm Tot}_Z(\sL_Z[-1]),$$
and the universal perfect complex $\sE_{(r,1)}^{\rm univ}$ is isomorphic to $\sO_{q}(-1)$ (see Lemma \ref{lem:dGrass:incidence}.\eqref{lem:dGrass:incidence-2}). 
	The map $r_+ = \iota$ is a closed immersion (see Lemma \ref{lem:dGrass:incidence}.\eqref{lem:dGrass:incidence-1}). 
	For $1 \le j \le r$, we let 
		$$\Omega_j = \iota_*\big(q^*(\blank) \otimes  \sO_{q}(-j) \big) \colon \D({\rm Tot}_{Z}(\sL_Z[-1]) ) \to \D(\Grass_X(\sE;d)).$$
\end{enumerate}

Therefore, in the above situation, Theorem \ref{thm:SOD} implies that:

\begin{corollary}
\label{cor:SOD:reducible}
The pullback functors $\pr^*_{\Grass(\sE;d)}$ and $\Omega_j$ (where $1 \le j \le r$) are fully faithful. 
Denoting the essential image of $\pr_{\Grass(\sE;d)}^*$ as $\D(X)_0$ and $\Omega_j$ as $\D({\rm Tot}_{Z}(\sL[-1]) )_{-j}$ (where $1 \le j \le r$), we have a semiorthogonal decomposition:
\begin{multline*}
	\D\left(\Bl_Z(X) \bigsqcup_{\PP_{Z}(\sN_{Z/X}^\vee)} \PP_{Z}(\sN_{Z/X}^\vee \oplus \sL_Z^\vee)\right)  \\
	  = \Big\langle \D({\rm Tot}_{Z}(\sL_Z[-1]) )_{-r},  \cdots, \D({\rm Tot}_{Z}(\sL_Z[-1]))_{-1}, ~\D(X)_0 \Big \rangle.
	\end{multline*}
\end{corollary}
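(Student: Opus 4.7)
The plan is to deduce this corollary as a specialization of Theorem \ref{thm:SOD} applied to $\sE$ with $d = r$, observe that most of the $2^r$ components vanish in this setup, and match the surviving ones to $\pr^\ast_{\Grass(\sE;r)}$ and the $\Omega_j$'s after a global auto-equivalence. The crucial simplification is that $\sH^0(\sE^\vee[1])$ equals $\Coker\big((s^\vee,0) \colon \sV^\vee \oplus \sL^\vee \to \sO_X\big) \simeq \sO_X/\sI_Z = \sO_Z$, which has fiberwise rank at most one. Any classical $\kappa$-valued point of $\Grass_X(\sE^\vee[1];j)$ requires a surjection from this rank-$\le 1$ sheaf onto a rank-$j$ $\kappa$-vector space, which is impossible for $j \ge 2$; hence the classical truncation of $\Grass_X(\sE^\vee[1];j)$ is empty when $j \ge 2$, so is the derived scheme itself, and all corresponding pieces in the decomposition of Theorem \ref{thm:SOD} vanish.

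Only two values of $i$ survive. For $i = r$ and $\lambda = (0)$, one copy of $\D(\Grass_X(\sE^\vee[1];0)) = \D(X)$ is embedded by $\Phi^{(r,(0))} \simeq \pr^\ast_{\Grass(\sE;r)}(\blank) \otimes \det(\sQ_{\Grass(\sE;r)})^r$. For $i = r-1$ and $\lambda = (k)$ with $0 \le k \le r-1$, there are $r$ copies of $\D(\Grass_X(\sE^\vee[1];1)) \simeq \D(\PP_X(\sE^\vee[1])) \simeq \D({\rm Tot}_Z(\sL_Z[-1]))$, embedded by $\Phi^{(r-1,(k))} \simeq \iota_\ast\big(q^\ast(\blank) \otimes \sO_q(-k)\big) \otimes \det(\sQ_{\Grass(\sE;r)})^{r-1}$, using $\sE^{\rm univ}_{(r,1)} \simeq \sO_q(-1)$ together with $\dSchur^{(k)}(\sO_q(-1)) \simeq \sO_q(-k)$. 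I would then post-compose the entire decomposition with the target auto-equivalence $\blank \otimes \det(\sQ_{\Grass(\sE;r)})^{-r}$, which preserves both SOD structure and semiorthogonal order; this immediately turns $\Phi^{(r,(0))}$ into $\pr^\ast_{\Grass(\sE;r)}$, matching $\D(X)_0$.

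For the $r$ surviving twisted functors, I would combine the projection formula with the determinantal identity
\begin{equation*}
\iota^\ast \det(\sQ_{\Grass(\sE;r)}) \simeq \sO_q(1) \otimes r_-^\ast\!\big(\pr_-^\ast\det(\sE) \otimes (\det \sQ_-)^{-1}\big),
\end{equation*}
obtained by taking determinants in the defining fiber sequences of $\sE^{\rm univ}_{(r,1)}$ from Lemma \ref{lem:E^univ:incidence} and using $\det(\sE^{\rm univ}_{(r,1)}) = \sO_q(-1)$. Since the two extraneous factors are pulled back along $r_- = q$ from ${\rm Tot}_Z(\sL_Z[-1])$, they can be absorbed into an auto-equivalence on the source $\D({\rm Tot}_Z(\sL_Z[-1]))$ without affecting essential images, so the essential image of $\Phi^{(r-1,(k))}(\blank) \otimes \det(\sQ_{\Grass(\sE;r)})^{-r}$ coincides with that of $\Omega_{k+1}$. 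Finally, the total order $<$ of Notation \ref{notation:differene.order} linearizes the surviving indices as $(r-1,(r-1)) < (r-1,(r-2)) < \cdots < (r-1,(0)) < (r,(0))$, and under the bijection $k \mapsto k+1$ this produces exactly the semiorthogonal order $\Im \Omega_r, \Im \Omega_{r-1}, \ldots, \Im \Omega_1, \Im \pr^\ast$ claimed in the statement. The main obstacle is bookkeeping: tracking the line-bundle factors through the determinantal identity and confirming that only the $\sO_q(1)$ factor contributes to the fiber-direction shift $k \mapsto k+1$, with all other factors pulled back from ${\rm Tot}_Z(\sL_Z[-1])$.
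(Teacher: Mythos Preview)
Your proposal is correct and follows the same route as the paper: the corollary is stated as an immediate consequence of Theorem \ref{thm:SOD} applied with $d=r$, and the paper gives no further argument beyond the preparatory observations (1)--(3) that precede the statement. You have simply spelled out the bookkeeping the paper leaves implicit---the vanishing of $\Grass_X(\sE^\vee[1];j)$ for $j\ge 2$, the global twist by $\det(\sQ_{\Grass(\sE;r)})^{-r}$, the determinantal identity absorbing the remaining $\det(\sQ)^{-1}$ into an $\sO_q(1)$-shift plus a source auto-equivalence, and the verification of the semiorthogonal order---all of which are exactly what is needed to pass from the functors $\Phi^{(i,\lambda)}$ of Theorem \ref{thm:SOD} to the functors $\pr^*$ and $\Omega_j$ as defined in the text.
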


\begin{remark}
\label{rmk:deftonormalcone}
In the special case where $\sL = \sO_X$ is trivial, ${\rm Tot}_{Z}(\sL_Z[-1]) = Z[\varepsilon_1]$, where $Z[\varepsilon_1]$ denotes $Z \times \Spec(\Sym^*(\kk[1]))$, and the above semiorthogonal decomposition reduces to
	\begin{align*}
	\D\left(\Bl_Z(X) \bigsqcup_{\PP_{Z}(\sN_{Z/X}^\vee)} \PP_{Z}(\sN_{Z/X}^\vee \oplus \sO_Z)\right)   = \Big\langle \D(Z[\varepsilon_1])_{-r},  \cdots, \D(Z[\varepsilon_1])_{-1}, ~\D(X)_0 \Big \rangle.
	\end{align*}
Here, the scheme appearing on the left-hand side is precisely the central fiber $\rho^{-1}(0)$ in the deformation-to-normal-cone construction, where $\rho$ denotes the natural projection $\Bl_{Z \times \{0\}} (X \times \AA^1) \to \AA^1$. In this case, the above semiorthogonal decomposition agrees with the derived base-change of Orlov's blowup formula \cite{Orlov92} for $\Bl_{Z \times \{0\}} (X \times \AA^1)$ to the central fiber.
\end{remark}

In the special case where $Z=D$ is an effective divisor and $\sL = \sN_{D/X}$, we recover the the semiorthogonal decomposition
\begin{align*}
	\D\big(X \bigsqcup\nolimits_{D} \PP_D^1\big)   = \big\langle\D({\rm Tot}(\shN_{D/X}[-1]) ),  ~\D(X) \big \rangle	
	\end{align*}
of  \cite[Examples 7.21]{J22a}. If we furthermore assume that $X=C$ is a complex curve and $Z = \{p\}$ is a non-singular closed point, we recover the semiorthogonal decomposition
	\begin{align*}
	\D\big(C \bigsqcup\nolimits_{p} \PP^1\big)   = \big\langle \D(\Spec \CC[\varepsilon_1]), \D(C) \big \rangle
	\end{align*}
of \cite[Examples 7.22]{J22a} (see also \cite[Proposition 6.15]{KS}), where $\CC[\varepsilon_1] = \Sym^*_\CC(\CC[1])$ is the ring of derived dual numbers. Hence the above result is a higher-codimensional generalization of the formula \cite[Examples 7.21]{J22a} for attaching $\PP^1$-bundles to divisors.

\subsection{Varieties of Linear Series on Curves}
In this subsection, we consider the case where $\kk = \CC$, and study a family of smooth complex projective curves  $\sC/S$  of genus $g \ge 1$. For simplicity, we assume the existence of a section $\sigma \colon S \to \sC$ of $\sC/S$. 

We denote the classical (rigidified) relative Picard functor of degree $d$ by $\underline{\Pic}_{\sC/S}^{d}$, which assigns to each $S$-scheme $T$ the isomorphism class of pairs $(\sL_T, i)$, where $\sL_T$ is a line bundle on $X_T$ with fiberwise degree $d$, and $i$ is an isomorphism $\sigma^*(\sL_T) \xrightarrow{\simeq} \sO_T$. 
Under this assumption, the functor $\underline{\Pic}_{\sC/S}^{d}$ is representable by a locally projective, smooth $S$-scheme $\Pic_{\sC/S}^d \to S$ of relative dimension $g$ (see \cite{Gro,Kl05}). 

Let $\sL_{\rm univ}$ be the Poincar\'e line bundle on $\Pic_{\sC/S}^d \times_S \sC$ and $\pr \colon \Pic_{\sC/S}^d \times_S \sC \to \Pic_{\sC/S}^d$ the natural projection. 
By applying the argument of \cite[\S 3.1.3]{JL18}, we obtain that 
	$\sE := \left(\pr_* (\sL_{\rm univ}) \right)^\vee$
is a perfect complex on $\Pic_{\sC/S}^d$ of Tor-amplitude in $[0,1]$ and rank $(1-g+d)$. For an integer $r \ge -1$, we define a (possibly derived) scheme
	$$\bG_d^r(\sC/S):  = \Grass_{\Pic_{\sC/S}^d}(\sE; r+1) \to \Pic^d_{\sC/S}.$$
This derived scheme is proper and quasi-smooth over $\Pic^d_{\sC/S}$, and its underlying closed points over a point $s \in S(\CC)$ correspond to the $\CC$-points of the variety $G_d^r(\sC_s)$ of linear series $g_d^r$ of degree $d$ and dimension $r$ on $\sC_s$ as studied in \cite[Chapter IV]{ACGH}.
More specifically, the closed points of $\bG_d^r(\sC/S)$ over $s \in S(\CC)$  are given by the isomorphism classes of the pair $(\sL_s, g_d^r)$, where $\sL_s$ is a line bundle on $\sC_s$ of degree $d$, and $g_d^r$ is a $r$-dimensional linear projective subspace of $\PP^{\rm sub}(\H^0(\sC_s; \sL_s))$. For any $0 \le i \le r+1$, the relative Serre duality implies the isomorphism
	$$\bG_{2g-2-d}^{r-i}(\sC/S)  \simeq \Grass_{\Pic_{\sC/S}^d}(\sE^\vee[1]; r+1-i) \to \Pic^d_{\sC/S}$$	
 whose underlying map carries a pair $(\sL_s, g_{2g-2-d}^{r-i})$ to the line bundle $\sL_s^\vee \otimes \omega_{\sC_s} \in \Pic^{2g-2-d}(\sC_s)$.
 
In this case, Theorem \ref{thm:SOD} yields the following corollary:
\begin{corollary}
\label{cor:SOD:curves}
In the above situation, assuming that $d \geq g-1$ and $r \geq -1$, there exists a semiorthogonal decomposition:
\begin{align*}
\D(\bG_{d}^{r}(\sC/S)) = \left\langle
\text{$\binom{1-g+d}{i}$ copies of } \D(\bG_{2g-2-d}^{r-i}(\sC/S)) \right\rangle_{0 \leq i \leq \min\{1-g+d, r+1\}},
\end{align*}
where the Fourier-Mukai functors and semiorthogonal orders are given as in Theorem \ref{thm:SOD}.
\end{corollary}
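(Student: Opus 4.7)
The plan is to apply Theorem \ref{thm:SOD} directly to the base $X = \Pic^d_{\sC/S}$ (a scheme over $\CC$, hence over $\QQ$) and the perfect complex $\sE = (\pr_*\sL_{\rm univ})^\vee$, taking the Grassmannian parameter to be $d_{\rm thm} := r+1$. The excerpt already records that $\sE$ has Tor-amplitude in $[0,1]$ and rank $r_{\rm thm} := 1-g+d$, which is nonnegative by the hypothesis $d \geq g-1$. For $r \geq 0$ the theorem therefore yields
$$\D(\bG_d^r(\sC/S)) = \Big\langle \binom{1-g+d}{i}\ \text{copies of}\ \D(\Grass_{\Pic^d_{\sC/S}}(\sE^\vee[1];\,r+1-i)) \Big\rangle_{0 \leq i \leq \min\{1-g+d,\, r+1\}},$$
with Fourier--Mukai kernels and semiorthogonal ordering exactly as in that theorem. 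The boundary case $r=-1$ forces $\min\{1-g+d,0\}=0$, collapsing the decomposition to the single identification $\bG_d^{-1}(\sC/S) = \Pic^d_{\sC/S} \simeq \Pic^{2g-2-d}_{\sC/S} = \bG_{2g-2-d}^{-1}(\sC/S)$ obtained below.

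The substance of the argument is then to identify $\Grass_{\Pic^d_{\sC/S}}(\sE^\vee[1];\,r+1-i)$ with $\bG_{2g-2-d}^{r-i}(\sC/S)$. I proceed via relative Grothendieck--Serre duality for the smooth projective morphism $\pr \colon \Pic^d_{\sC/S} \times_S \sC \to \Pic^d_{\sC/S}$, whose relative dualizing complex is $\pr_{\sC}^*\omega_{\sC/S}[1]$. Applied to $\sL_{\rm univ}$, this gives
$$\pr_*\bigl(\sL_{\rm univ}^\vee \otimes \pr_{\sC}^*\omega_{\sC/S}\bigr) \simeq \pr_*(\sL_{\rm univ})^\vee[-1] = \sE[-1].$$
Writing $\tau \colon \Pic^d_{\sC/S} \xrightarrow{\simeq} \Pic^{2g-2-d}_{\sC/S}$ for the Serre autoduality isomorphism $\sL \mapsto \sL^\vee \otimes \omega_{\sC/S}$, the universal property of the rigidified Poincar\'e bundle produces an identification $(\tau \times \id_\sC)^*\widetilde{\sL}_{\rm univ} \simeq \sL_{\rm univ}^\vee \otimes \pr_{\sC}^*\omega_{\sC/S} \otimes \pr_{\Pic^d_{\sC/S}}^*\sM$ for a suitable line bundle $\sM$ on $\Pic^d_{\sC/S}$ correcting for the rigidification at $\sigma$. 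Applying $\pr_*$ and using the projection formula yields $\tau^*\widetilde{\sE} \simeq \sE^\vee[1] \otimes \sM^\vee$, and since twisting by a line bundle pulled back from the base induces a canonical isomorphism of relative Grassmannians, I conclude
$$\bG_{2g-2-d}^{r-i}(\sC/S) \simeq \Grass_{\Pic^d_{\sC/S}}(\sE^\vee[1];\,r+1-i).$$
Substituting this identification into the output of Theorem \ref{thm:SOD} produces the claimed decomposition.

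I expect the principal subtlety to lie in the bookkeeping around the rigidification: the Poincar\'e bundles on $\Pic^d_{\sC/S}$ and $\Pic^{2g-2-d}_{\sC/S}$ agree under $\tau$ only up to the correction bundle $\sM$ pulled back from the base, and one must verify that this ambiguity is invisible to both the derived Grassmannian construction and to the Fourier--Mukai kernels produced by Theorem \ref{thm:SOD}. The former is immediate from the canonical identification $\Grass(\sF \otimes \sM;k) \simeq \Grass(\sF;k)$ for line bundles $\sM$ on the base, and the latter follows because each kernel $\Phi^{(i,\lambda)}$ is built from derived Schur functors applied to the universal complex on the incidence locus, which transforms compatibly under such a twist. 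Once this housekeeping is dispatched, the corollary is a direct translation of Theorem \ref{thm:SOD} via the Serre-duality identification $\sE^\vee[1] \simeq \tau^*\widetilde{\sE}$ (up to $\sM$), with no further input required.
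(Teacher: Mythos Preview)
Your proposal is correct and follows essentially the same approach as the paper: the paper gives no separate proof of this corollary, having already recorded in the setup that $\sE$ has Tor-amplitude in $[0,1]$ and rank $1-g+d$, and that relative Serre duality furnishes the identification $\bG_{2g-2-d}^{r-i}(\sC/S) \simeq \Grass_{\Pic^d_{\sC/S}}(\sE^\vee[1]; r+1-i)$, after which it simply states that Theorem~\ref{thm:SOD} yields the corollary. Your write-up supplies the details the paper leaves implicit, including the rigidification/line-bundle bookkeeping, and is if anything more thorough than the paper itself.
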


Now we focus on the case where $S = \Spec \CC$. We denote $\bG_{d}^{r}(C) = \bG_{d}^{r}(C/\Spec \CC)$. 

If $C$ is a \emph{general} curve, $\bG_d^r(C) = G_d^r(C)$ is the classical variety of linear series on $C$ of degree $d$ and dimension $r$ studied in \cite{ACGH}. Similarly, $\bG_{2g-2-d}^{r-i}(C) = G_{2g-2-d}^{r-i}(C)$ for $0 \le i \le r+1$. Moreover, these varieties are reduced, smooth, and have expected dimensions (\cite[Theorems V.(1.5), V.(1.6)]{ACGH}). They are non-empty precisely when their expected dimensions are non-negative (\cite[Theorems V.(1.1), V.(1.5)]{ACGH}). In this case, Corollary \ref{cor:SOD:curves} implies
	\begin{align*}
	\D(G_{d}^{r}(C)) = \left\langle
\text{$\binom{1-g+d}{i}$ copies of } \D(G_{2g-2-d}^{r-i}(C)) \right\rangle_{0 \leq i \leq \min\{1-g+d, r+1\}}.
	\end{align*}
Additionally, when $C$ is general, it can be shown, following a similar argument as in \cite[Lemmas B.3, B.4]{JL18}, that the incidence schemes $\Incidence_{r+1,r+1-i}(\sE)$ are isomorphic to the classical fiber products $G_d^r(C) \times_{\Pic_C^d} G_{2g-2-d}^{r-i}(C)$ and have expected dimensions. Furthermore, for a general curve $C$, Lin and Yu \cite{LY21} showed that the derived categories $\D(G_{2g-2-d}^{r-i}(C))$ are indecomposable for all $0 \le i \le r+1$.

If $C$ is special, then Corollary \ref{cor:SOD:curves} still holds and reveals many intriguing phenomena. Here, we only focus on two $3$-fold examples:

\begin{example} 
\label{eg:curves:g=5}
If $C$ is a (non-hyperelliptic) trigonal curve of genus $5$, then $W^2_5(C) \simeq W^1_3(C)$ consists of a single point. In this case, $\GG^1_5(C) =G^1_5$ is a classical irreducible singular threefold, and $\GG^0_3 \simeq C^{(3)}$ is classical and smooth. Corollary \ref{cor:SOD:curves} implies
	$$\D(G^1_5(C)) = \big\langle \D(\bG^1_3(C)), ~ \D(C^{(3)}) \big\rangle,$$
where $\bG^1_3(C)$ is a nonclassical derived scheme with virtue dimension $-1$, and and has underlying scheme $W^2_5(C) \simeq W^1_3(C)$ whose support consists of a single point. The birational map $C^{(3)} \dashrightarrow G^1_5(C)$ is a flip of threefold, and the embedding $\D(C^{(3)}) \hookrightarrow \D(G_5^1(C))$ is induced by the structure sheaf of the classical reducible scheme $C^{(3)}\times^{\rm cl}_{\Pic^3(C)} G_1^5(C)$.
\end{example}

\begin{example} 
\label{eg:curves:g=7}
If $C$ is a general trigonal curve of genus $7$, then $\dim W^1_6(C) = 3$, $\dim W^2_6(C) = 0$, and they are both nonempty (see \cite[Lemma 2.1]{Lar}).  In this case, $\bG^1_6(C) = G^1_6(C)$ is a classical, equidimensional scheme of dimension three. Hence Corollary \ref{cor:SOD:curves} implies a derived equivalence $\D(G^1_6(C)) \xrightarrow{\simeq}\D(G^1_6(C))$ for the threefold flop $G^1_6(C) \rightarrow W^1_6(C) \leftarrow G^1_6(C)$, where the second projection $G^1_6(C) \to W^1_6(C)\subseteq\Pic^6(C)$ is given by $(L,g^1_6) \mapsto L^\vee \otimes \omega_{C}$. Moreover, the derived equivalence is induced by a nonclassical incidence scheme whose underling scheme is the classical fiber products $G^1_6(C) \times_{\Pic^6(C)}^{\rm cl} G^1_6(C)$. \end{example}


The above examples highlight the importance of considering both the derived structures on $\bG_d^r$'s as well as derived structures on incidence correspondence schemes when studying semiorthogonal decompositions for varieties of linear series on special curves. 

\begin{remark}
\label{rmk:curves:Hilb}
The framework presented in this paper allows us to extend Corollary \ref{cor:SOD:curves} to families of singular curves $\sC/S$. For instance, consider a family $\sC/S$ of integral Gorenstein curves with arithmetic genus $g\geq 1$, and let $d\geq g-1$ be an integer. In the case of $r=0$, we obtain a semiorthogonal decomposition:
	$$\D(\Hilb^{d}_{\sC/S}) = \big \langle \D(\Hilb^{2g-2-d}_{\sC/S}) , ~ \D(\overline{{\rm Jac}}^d_{\sC/S})_{1}, \cdots,  \D(\overline{{\rm Jac}}^d_{\sC/S})_{1-g+d} \big \rangle,$$
where $\Hilb^{d}_{\sC/S}$ and $\Hilb^{2g-2-d}_{\sC/S}$ are derived Hilbert schemes of $d$ and $(2g-2-d)$ points, respecitvely, for the family $\sC/S$, and $\overline{{\rm Jac}}^d_{\sC/S}$ is the compactified Jacobian scheme parametrizing rank one torsion-free sheaves of degree $d$.
Similar generalizations of Corollary \ref{cor:SOD:curves} exist in the case of all $r$. The details will appear in a forthcoming paper.
\end{remark}

\bibliographystyle{alpha}
\bibliography{DQuot_refs}

\end{document}